\journal{Journal of Computational Physics}
\begin{document}
\begin{frontmatter}

\title{A low-rank approach to the computation of path integrals}
\author[skoltech]{Mikhail S. Litsarev}
\ead{m.litsarev@skoltech.ru}
\author[skoltech,inm]{Ivan V. Oseledets}
\address[skoltech]{Skolkovo Institute of Science and Technology, 
Skolkovo Innovation Center, Building 3, 143026 Moscow, Russia}
\address[inm]{Institute of Numerical Mathematics of Russian Academy of Sciences,
Gubkina St. 8, 119333 Moscow, Russia}

\begin{abstract}
We present a method for solving the reaction-diffusion 
equation with general potential  in free space.
It is based on the approximation of the Feynman-Kac formula
 by a sequence of convolutions
on sequentially diminishing grids.
For computation of the convolutions we propose a fast
algorithm based on the low-rank approximation of the Hankel matrices.
The algorithm has complexity of
$\mathcal{O}(nr M \log M + nr^2 M)$ flops 
and requires $\mathcal{O}(M r)$ floating-point numbers in memory,
where $n$~is the dimension of the integral, $r \ll n$, and $M$ is the mesh size in one dimension.
The presented technique can be generalized to the higher-order
diffusion processes.
\end{abstract}

\begin{keyword}
Low-rank approximation 
\sep Feynman-Kac formula
\sep Path integral
\sep Multidimensional integration
\sep Skeleton approximation
\sep Convolution
\end{keyword}

\end{frontmatter}

\today

\section{Introduction}
\label{IntroSect}

Path integrals~\cite{feynm-path-1948, fh-quantmech-1965, gc-hampath-1966} 
play a dominant role in description of a wide range of problems in physics and mathematics.
They are a universal and powerful tool 
for condensed matter and  high-energy physics,
theory of stochastic processes 
and parabolic differential equations,
financial mathematics, quantum chemistry and many others. 
Different theoretical and numerical approaches 
have been developed for their 
computation, such as
the perturbation theory~\cite{agd-greenbook-1975},
the stationary phase approximation~\cite{mahan-manypart-2000, bh-asimpthotics-2010},
the functional renormalization group~\cite{jzj-qftcf-1996, jp-frgm-2003}, 
various Monte Carlo~\cite{mc-proceedings-2012} and
sparse grids methods~\cite{mh-sparse-2011, sp-proceedings-2013}.
The interested reader can find particular details in 
the original reviews and books~\cite{wong-pathrev-2014, mm-pathbook-2008, hk-pathint-2009}.

In this paper we focus on the one-dimensional reaction-diffusion equation
with initial distribution $f(x): \mathbb{R} \to \mathbb{R}^{+}$
and a constant diffusion coefficient~$\sigma$
\begin{equation}
\label{UxtDiffEqFull}
\left\{
\begin{aligned}
 & \frac{\partial}{\partial t} u(x,t)
 = \sigma \frac{\partial^2}{\partial x^2} u(x,t) - V(x,t) u(x,t),\\
& u(x,0)=f(x)
\end{aligned}
\right.
\qquad t \in [0, T],
\quad x \in \mathbb{R}.
\end{equation}
This equation may be treated in terms of a Brownian particle 
motion~\cite{crank-diffus-1975, bass-diffus-1998, chd-pathint-2001},
where the solution $u(x,t): \mathbb{R} \times [0,T]  \to \mathbb{R}^{+}$ is
the density distribution of the particles.
The potential (or the dissipation rate) $V(x,t)$ is bounded from below.
We do not consider the drift term $\rho \frac{ \partial  }{ \partial x }u(x,t)$
because it can be easily excluded by 
a substitution $u(x,t)\to \tilde u(x,t) e^{-\rho x}$~\cite{bs-brownmot-2002}.

The solution of~\eqref{UxtDiffEqFull} can be expressed by the Feynman-Kac 
formula~\cite{bs-brownmot-2002, kac-ipath-1951, ks-browmot-1991} 
\begin{equation}
\label{FKformulaF}
u(x,T)=\int_{\mathcal C\{x,0; T \}}  f(\xi(T))
 e^{-\int_{0}^{T}\! V(\xi(\tau),T-\tau) d\tau }
\mathcal{D}_{\xi},
\end{equation}
where the integration is done over the set~${\mathcal C\{x,0; T \}}$ of all
continuous paths $\xi(T): [0,T]\to \mathbb{R}$
from the Banach space $\Xi([0,T], \mathbb{R})$
starting at $\xi(0)=x$ and stopping at arbitrary endpoints at time~$T$. 
$\mathcal{D}_{\xi}$ is the Wiener measure,
and $\xi(t)$ is the Wiener process~\cite{mazo-browmot-2002, nels-browmot-2001}.
One of the advantages of the formulation \eqref{FKformulaF} is that it can be directly applied 
for the unbounded domain without any additional (artificial) boundary conditions.

Path integral~\eqref{FKformulaF}
corresponding to the Wiener process 
is typically approximated by a \emph{finite}
multidimensional integral with the Gaussian measure 
(details are given in Section~\ref{Discretization}). 
The main drawback is that this integral is a high-dimensional one 
and its computation requires a special treatment. 
Several approaches have been developed to compute the multidimensional integrals efficiently. 
The sparse grid method~\cite{smolyak-1963, gerstner-sparsequad-2003} has 
been applied to the computation 
of path integrals in~\cite{gerstner-sparsegrid-1998}, 
but only for dimensions up to $\sim 100$, which is not enough in some applications. 
The main disadvantage of the Monte Carlo simulation is that
it does not allow to achieve a high accuracy~\cite{makri-mc-1987, tretiak-mc-2013}
for some cases (highly oscillatory functions, functions of sum of all arguments).
 
The multidimensional \emph{integrand} can be represented numerically as a
multidimensional array (\emph{a tensor}), which contains values of a multivariate
function on a fine uniform grid. 
For the last decades several approaches have been developed
to efficiently work with tensors. They are based on the idea of \emph{separation of 
variables}~\cite{dela-survey-2009, smilde-book-2004,
khor-lectures-2010, khor-survey-2011}
firstly introduced in~\cite{hitchcock-sum-1927, hitchcock-rank-1927}.
It allows to present a tensor in the \emph{low-rank} or \emph{low-parametric}
format~\cite{treview-2012,kolda-review-2009, lghw-httt-2011},
where the number of parameters used for the approximation is
almost linear (with respect to dimensionality).
To construct such decompositions numerically 
the very efficient algorithms have been developed  recently:
two-dimensional \emph{incomplete cross 
approximation}\footnote{Because the low-rank 
representation of large matrices based on the adaptive cross approximation 
is directly related to the manuscript we summarize the basics of the method
in~\ref{CrossApp}.} for the skeleton decomposition,
three-dimensional cross approximation~\cite{ost-tucker-2008} for 
the Tucker format~\cite{tucker-factor-1966, tucker-factor-1963, 
tucker-factor-1964,lathauwer-svd-2000} in 3D,
\emph{tt-cross}~\cite{ot-ttcross-2010} approximation for the tensor train 
decomposition~\cite{ot-tt-2009, osel-tt-2011},
which can be also considered as a particular case of the
hierarchical Tucker format~\cite{hk-ht-2009, gras-hsvd-2010, hackbusch-2012}
for higher dimensional case.
For certain classes of functions commonly used in the computational physics
(multiparticle Schr\"odinger operator~\cite{mpi-chem3d-2009,
khor-ml-2009, khorombv-qch-2015, schn-dft-2009, schn-dft-2007,khor-chem-2011}, 
functions of a discrete elliptic operator~\cite{kazkh-laplace-2012,khor-low-rank-kron-P1-2006, 
khor-low-rank-kron-P2-2006, beylkin-2002, GHK-ten_inverse_ellipt-2005,
khor-prec-2009},
Yukawa, Helmholtz and Newton potentials~\cite{khor-tuckertype-2007, 
hh-jc-2007,hackbra-expsum-2005,om-stockes-2010}, etc.) there exist
low-parametric representations in separated formats 
and explicit algorithms~\cite{osel-constr-2013,beylkin-high-2005} to obtain
and effectively work with them
(especially \emph{quantized tensor train} (QTT) 
format~\cite{DKhOs-parabolic1-2012, khos-pde-2010, khos-dmrg-2010,
osel-2d2d-2010,lebedeva-tensornd-2011,
 khos-qtt-2010,sav-rank1-2011pre, khor-qtt-2011}). 
In many cases it is very effective to compute the multidimensional 
integrals~\cite{khkhschn-dft-2013} 
using separated representations~\cite{ballani-integrals-2012},
particularly for multidimensional convolutions~\cite{ro-conv3d-2015, 
khor-acc-2010, khkaz-conv-2011pre, dks-ttfft-2012} and 
highly oscillatory functions~\cite{ beylkin-expsumrev-2010}.

Our approach presented here is based on the \emph{low-rank approximation}
of matrices used in an essentially different manner.
We formulate the Feyman-Kac formula as
an iterative sequence of convolutions 
defined on grids of diminishing sizes.
This is done in Section~\ref{MultidimIntegrConvolutions}. 
To reduce the complexity of this computation,
in~Section~\ref{LowRankTheorems} we find a \emph{low-rank} basis set by
applying the \emph{cross approximation} (see~\ref{CrossApp}) to a matrix 
constructed from the values of a one-dimensional
function on a very large grid.
That gives reduce of computational time and memory requirements,
resulting in fast and efficient algorithm presented in Section~\ref{AlgorithmSect}. 
The numerical examples are considered in Section~\ref{NumCalcs}.
The most interesting part is that we are able to treat 
non-periodic potentials without any artificial boundary 
conditions (Section~\ref{ImpurPeriodicExmpl}).

\section{Problem statement}

\subsection{Time discretization }
\label{Discretization}
Equation~\eqref{FKformulaF} corresponds to the Wiener process. 
A standard way to discretize the path integral is 
to break the time range $[0,T]$ into $n$ intervals by points
\begin{equation*}
\tau_k=k \cdot \delta t, \qquad 0\le k\le n,
\qquad n\!: \tau_n=T.
\end{equation*}
The average path of a Brownian particle $\xi(\tau_k)$ after $k$ steps
is defined as
\begin{equation*}
\xi^{(k)}=\xi(\tau_k)=x + \xi_1 + \xi_2 + \ldots + \xi_k,
\end{equation*}
where every random step $ \xi_i$, $1\le i\le k$, is
independently taken from a normal 
distribution~$\mathcal{N}(0,2\sigma \delta t)$
with zero \emph{mean} and \emph{variance} equal to $2 \sigma\delta t$.
By definition, $\xi^{(0)}=x$.

Application of a suitable quadrature rule on the uniform
 grid (i.e., trapezoidal or Simpson rules) 
with the weights~$\{w_i\}_{i=0}^{n}$ to the time integration 
in~\eqref{FKformulaF} gives
\begin{equation}
\label{Vin}
\Lambda(T)=
\int_{0}^{T} V (\xi(\tau), T-\tau) d\tau \approx
\sum_{i=0}^{n} w_{i} V^{(n)}_{i} \delta t,
\qquad
V^{(n)}_{i}\equiv V (\xi(\tau_i),  \tau_{n-i}),
\end{equation}
and transforms the exponential factor to the approximate expression
\begin{equation*}
e^{-\Lambda (T) } \approx
\prod_{i=0}^{n} e^{ -w_{i} V^{(n)}_{i} \delta t }.
\end{equation*}
The Wiener measure, in turn, transforms to
the ordinary $n$-dimensional measure
\begin{equation*}
\mathcal{D}^{(n)}_{\xi}=
\left( \frac{\lambda}{\pi} \right)^{\frac{n}{2}}
\prod_{k=1}^{n}
e^{-\lambda \xi_k^2} \, d \xi_k ,
\qquad
\lambda = \frac{1}{4 \sigma \delta t},
\end{equation*}
and the problem reduces to
an $n$-dimensional integral over the Cartesian coordinate space.
Thus, we can approximate the exact solution~\eqref{FKformulaF}
by $u^{(n)}(x,t)$
\begin{equation*}
u(x,T)=\lim_{n \to \infty} u^{(n)}(x,T)
\end{equation*}
written in the following compact form
\begin{equation}
\label{discr_uv}
u^{(n)}(x,T)=\int_{-\infty}^{\infty} \mathcal{D}^{(n)}_{\xi}
\, f\!\left( \xi^{(n)}\right) \! \prod_{i=0}^{n}
e^{ -w_{i} V^{(n)}_{i} \delta t }.
\end{equation}
The integration sign here denotes an $n$-dimensional integration
over the particle steps $\xi_k$,
and $V^{(n)}_{i}$ is defined in~\eqref{Vin}.
The convergence criterion  in terms of~$n$ for the sequence~\eqref{discr_uv}
is discussed and proven in~\cite{chd-pathint-2001}, p.~33.
The limit of~\eqref{discr_uv} exists if it is a Cauchy sequence.

Our goal is to compute the integral~\eqref{discr_uv}
numerically in an efficient way.

\section{Computational technique}

\subsection{Notations}
\label{SectDefTech}
In this paper vectors (written in columns) are denoted 
by boldface lowercase letters, e.g., $\mathbf{a}$,
matrices are denoted by boldface capital letters, e.g., $\mathbf{A}$.
The $i$-th element of  a vector $\mathbf{a}$ is denoted by $a_i$,
the element $(i,j)$ of a matrix $\mathbf{A}$ is denoted by $A_{ij}$.
A set of vectors $\mathbf{a}_m$, $m_0 \le m \le m_1$ is denoted by
$\left\{ \mathbf{a}_m \right\}_{m=m_0}^{m_1}$,
and the $i$th element of a vector $\mathbf{a}_m$ is denoted
by $a_{mi}$.

\newdefinition{Def}{Definition} 

\begin{Def}
\label{Def2Conv}
Let $\mathbf{a} \in \mathbb{R}^{k}$ and $\mathbf{b} \in \mathbb{R}^{m}$
be vectors and $k \ge m$.
We say that a vector $\mathbf{c} \in \mathbb{R}^{m+k - 1}$ is a \emph{convolution} 
of two ordered vectors $\mathbf{a}$ and $\mathbf{b}$ and write
\begin{equation*}
\mathbf{c} = \mathbf{a} \circ \mathbf{b},
\end{equation*}
if~$\mathbf{c}$ has the following components
\begin{equation*}
c_i=\sum_{j=0}^{m-1}a_{i+j} b_{j}, \qquad
a_i=0, \forall i : \{i \  | \  i < 0 \vee i >= k \}.
\end{equation*}
\end{Def}
The computation of the convolution can be naturally 
carried out as a multiplication by the \emph{Hankel matrix}.
\begin{Def}
\label{Def1HankelGen}
We say that the \emph{Hankel matrix} $\mathbf{A} \in \mathbb{R}^{k\times k}$ is 
generated by row~$\mathbf{a}^T\in \mathbb{R}^{k}$
and column~$\mathbf{b} \in \mathbb{R}^{k-1}$, and denote this by 
\begin{equation*}
\mathbf{A}=[\mathbf{a}^T, \mathbf{b}]_{H},
\end{equation*}
if
\begin{equation*}
\mathbf{A}=
\begin{pmatrix}
a_0 & a_1 & a_2 & \cdots & a_{k-2} & a_{k-1} \\
a_1 & a_2 & a_3 &\cdots & a_{k-1} & b_{0} \\
a_2 & a_3 & a_4 &\cdots & b_{0} & b_{1} \\
\vdots & \vdots & \vdots & \ddots & \vdots & \vdots \\
a_{k-2}& a_{k-1} & b_0 & \cdots & b_{k-4} & b_{k-3}\\
a_{k-1}& b_0 & b_1& \cdots & b_{k-3} & b_{k-2}
\end{pmatrix},
\end{equation*}
\begin{equation}
\label{abshortDef}
 \mathbf{a}^T=\left(a_0, a_1 ,a_2,\ldots, a_{k-2},a_{k-1} \right), 
\qquad
 \mathbf{b}=\left(b_0, b_1, b_2,\ldots, b_{k-2} \right)^T.
\end{equation}
\end{Def}
This compact notation will be used to compute convolutions (when
they are written as a Hankel matrix-vector products). 
As it can be directly verified, $\forall \alpha \in \mathbb{R}$
\begin{equation}
\label{Aalpfab}
\alpha \cdot \mathbf{A} = 
[\alpha \cdot \mathbf{a}^T, \alpha \cdot \mathbf{b}]_{H}.
\end{equation}
\begin{Def}
For two vectors $\mathbf{a}$ and $\mathbf{b}$ from~\eqref{abshortDef}
for the case $a_i=b_i$, $\forall i : 0 \le i < k-1$,
we will also write 
\begin{equation}
\label{abColDef}
\mathbf{a} = 
\begin{pmatrix}
\mathbf{b} \\ a_{k-1}
\end{pmatrix}.
\end{equation}
This notation will be used when vector $\mathbf{b}$ is a subvector of~$\mathbf{a}$.
\end{Def}

\subsection{Multidimensional integration via the sequence of one-dimensional convolutions}
\label{MultidimIntegrConvolutions}
Multidimensional integral~\eqref{discr_uv} can be represented in terms of
an iterative sequence of one-dimensional convolutions.
Indeed, for a one-dimensional function
$F^{(n)}_k(x)$, such that
\begin{equation}
\label{Integral_Iter_k}
F^{(n)}_k(x)= 
\sqrt{\frac{\lambda}{\pi}}
\int_{-\infty}^{\infty} \Phi^{(n)}_{k+1}(x + \xi)  
\, e^{-\lambda \xi^2 } d\xi,
\qquad     x \in \mathbb{R},
\qquad   k=n, n-1, \ldots, 1,
\end{equation}
with
\begin{equation}
\label{Phi_iter_k}
\Phi^{(n)}_{k+1}(x)= 
F^{(n)}_{k+1}(x) \,
e^{ -w_k V (x, \tau_{n - k}) \delta t  },
\end{equation}
and the initial condition
\begin{equation}
\label{Init_F_n}
F^{(n)}_{n+1}(x)=f(x),
\end{equation}
the solution~\eqref{discr_uv} reads
\begin{equation}
\label{un1xtFNumer}
u^{(n)}(x,T) = F^{(n)}_{1}(x) \, e^{-w_0 V(x, T) \, \delta t}.
\end{equation}
The iteration starts from $k=n$ and goes down to $k=1$.
Since the function $\Phi^{(n)}_{k}(x)$ is bounded and the convolution~\eqref{Integral_Iter_k}
contains the exponentially decaying Gaussian, 
the integral has finite lower and upper bounds.
Consider
\begin{equation}
\label{short_Integral_Iter_k}
F^{(n)}_k(x)\approx
\tilde F^{(n)}_k(x)= 
\sqrt{\frac{\lambda}{\pi}}
\int_{-a_x}^{a_x-h_x} \tilde \Phi^{(n)}_{k+1}(x + \xi)  
\, e^{-\lambda \xi^2 } d\xi.
\end{equation}
We suppose that the product
$ \Phi^{(n)}_{k+1}(x + \xi)  \, e^{-\lambda \xi^2 }$
rapidly decays, so that for $a_x$ large enough, we can
approximate the integral $ F^{(n)}_k(x)$  in~\eqref{Integral_Iter_k} 
by $\tilde F^{(n)}_k(x)$ and
assume that this approximation has an error~$\varepsilon$ in some norm
\begin{equation*}
\left\| F^{(n)}_k(x) - \tilde F^{(n)}_k(x) \right\| < \varepsilon.
\end{equation*}
\begin{figure}
\includegraphics{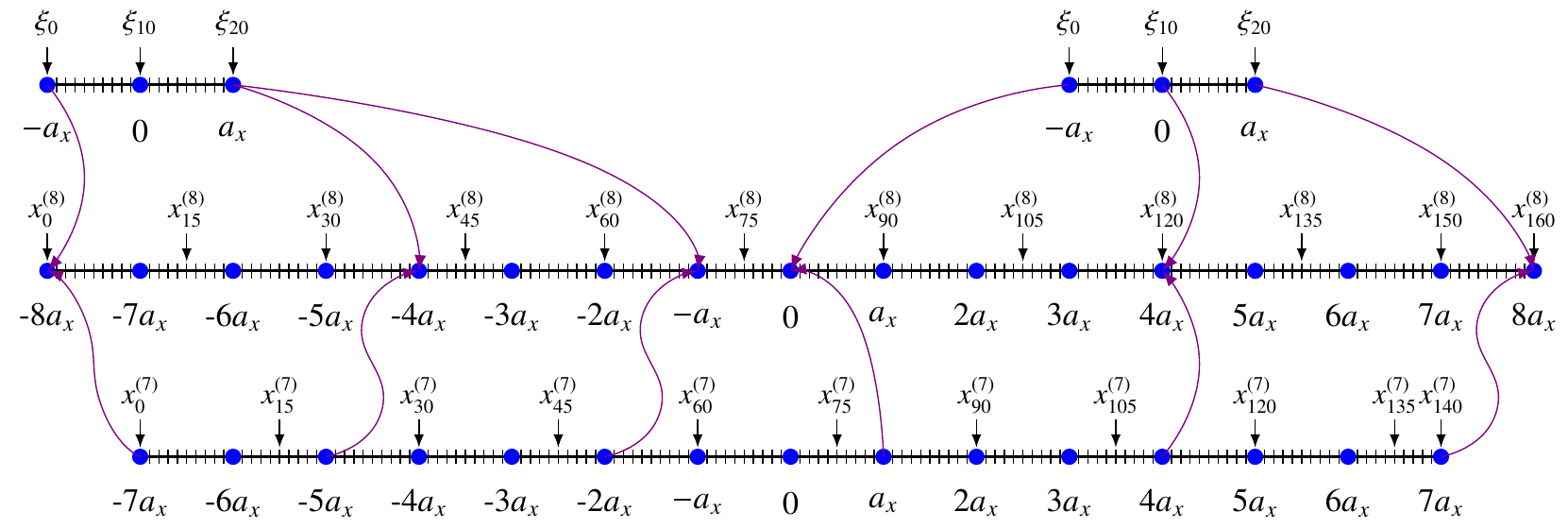}
\caption{
\label{PictMesh01}
A correspondence of meshes for two nearest iterations $x_{i+j}^{(k+1)}=x^{(k)}_{i} + \xi_{j}$
from equation~\eqref{yxksiij} for $k=7$. Blue filled circles separate the ranges corresponding to 
different steps $m$, $1 \le m \le k$ in time $[-m a_x, m a_x)$. Ticks on the axes label the mesh points. 
Violet curved lines show correspondence~\eqref{yxksiij} between 
the two meshes for nearest iterations. 
}
\end{figure}
This approximation has an important drawback:
as soon as $F^{(n)}_1(x)$ has to be computed on the semi-open interval $[-a_x, a_x)$,
the domain of $F^{(n)}_n(x)$ should be taken larger, i.e.~$[-n a_x, n a_x)$ for $n$~steps, 
because of the convolution structure of the 
integral~\eqref{short_Integral_Iter_k}.
Indeed, if we suppose, that the function~$F^{(n)}_k(x)$ is computed on the uniform mesh
\begin{equation}
\label{xunifmeshM}
x^{(k)}_i= -ka_x + i h_{x}, \qquad
0 \le i < kM, \qquad
h_x=a_x / N_{x}, \qquad
M=2N_{x},
\end{equation}
and the integration mesh is chosen to be nested in~\eqref{xunifmeshM}
with the \emph{same step} $h_{x}$
\begin{equation}
\label{ksiunifmesh}
\xi_j= -a_{x} + j h_{x}, 
\qquad
0 \le j < M,
\end{equation}
then the function $F^{(n)}_{k+1}(x)$ is defined on the mesh
\begin{equation}
\label{ximesh}
x^{(k+1)}_i= -(k+1) a_x + i h_{x}, \qquad
0 \le i < (k+1) M,
\end{equation}
and
\begin{equation}
\label{yxksiij}
x^{(k+1)}_{i+j}  =x^{(k)}_i+\xi_j.
\end{equation}
The last equality follows from definitions~\eqref{xunifmeshM} and~\eqref{ksiunifmesh}.
This is illustrated in Figure~\ref{PictMesh01}.

The integral~\eqref{short_Integral_Iter_k} can be calculated 
for every fixed $x^{(k)}_i$ of the mesh~\eqref{xunifmeshM}
as the quadrature sum with the weights $\{\mu_j\}_{j =0}^{M-1}$ 
\begin{equation}
\label{Iksum}
\tilde F^{(n)}_k\left(x^{(k)}_i\right) \approx
\sum_{j=0}^{M-1} \mu_j
\tilde \Phi^{(n)}_{k+1}\left(x^{(k+1)}_{i+j}\right)
p(\lambda,\xi_j), \qquad
p(\lambda,\xi)=\sqrt{\frac{\lambda}{\pi}}e^{-\lambda \xi^2}
\end{equation}
\begin{equation}
\label{tilde_Phi_iter_k}
\tilde \Phi^{(n)}_{k+1}\left(x^{(k+1)}_i\right)= 
\tilde F^{(n)}_{k+1}\left(x^{(k+1)}_i\right) \,
e^{ -w_k V (x^{(k+1)}_i, \tau_{n - k}) \delta t  }
\end{equation}
The complexity of the computation of $\tilde F^{(n)}_k\left(x^{(k)}_i\right)$ for all $i$
is $\mathcal{O}(k N_x^2)$ flops. It can be 
reduced to $\mathcal{O}(k N_x \log N_x)$
by applying the Fast Fourier transform (FFT) for convolution~\eqref{Iksum}.
Full computation of $\tilde F^{(n)}_1\left(x^{(1)}_i\right)$ costs $\mathcal{O}(n^2 N_x \log N_x)$
operations and $\mathcal{O}( n N_x )$ floating-point numbers.
This complexity becomes prohibitive for large $n$ (i.e., for small time steps), but can be reduced.
Below we present a fast approximate method for the calculation 
of~$\tilde F^{(n)}_1\left(x^{(1)}_i\right)$ in $\mathcal{O}(nr N_x \log N_x + nr^2 N_x)$ flops
and $\mathcal{O}(r N_x)$ memory cost with $r \ll n$,
by applying low-rank decompositions.

\subsection{Low-rank basis set for the convolution array.}
\label{LowRankTheorems}
In this section we provide a theoretical justification for our approach. 
Consider a sequence of matrices $\mathbf{A}^{(k)} \in \mathbb{R}^{kM \times M}$ 
corresponding to the iterative process~\eqref{Iksum}
and constructed in the following way
\begin{equation}
\label{ADirectDef}
A^{(k)}_{ij}=a^{(k)}_{i+j}\equiv
\tilde \Phi^{(n)}_{k}\left(x^{(k)}_{i+j}\right),
\qquad 0 \le j < M,
\qquad 0 \le i < kM,
\end{equation}
where $k$ is the iteration number.

Let us now consider iteration~\eqref{Iksum} at the step $k = k_0$ and 
for simplicity omit the index $k_0+1$ in
the matrix and mesh notations~\eqref{ADirectDef}.
Let us also denote the sum~\eqref{Iksum} 
for $x_i$ taken from the grid~\eqref{xunifmeshM} by $s_i=\tilde F^{(n)}_{k_0}\left(x_i\right)$ and 
set $p_j \equiv \mu_j p(\lambda, \xi_j)$. Then 
\begin{equation}
\label{pSiDefNote}
s_i= \sum_{j=0}^{M-1} 
A_{ij}\ p_j \qquad \Leftrightarrow \qquad
\mathbf{s} = \mathbf{A} \mathbf{p}.
\end{equation}
The equality~\eqref{pSiDefNote} establishes the recurrence relation between
iterations at the step~$k$ (the right-hand side) 
and the step~$k-1$ (the left-hand side) according to~\eqref{Iksum} 
and~\eqref{tilde_Phi_iter_k}, see Figure~\ref{Fig2SuperA}.
\begin{figure}
\includegraphics{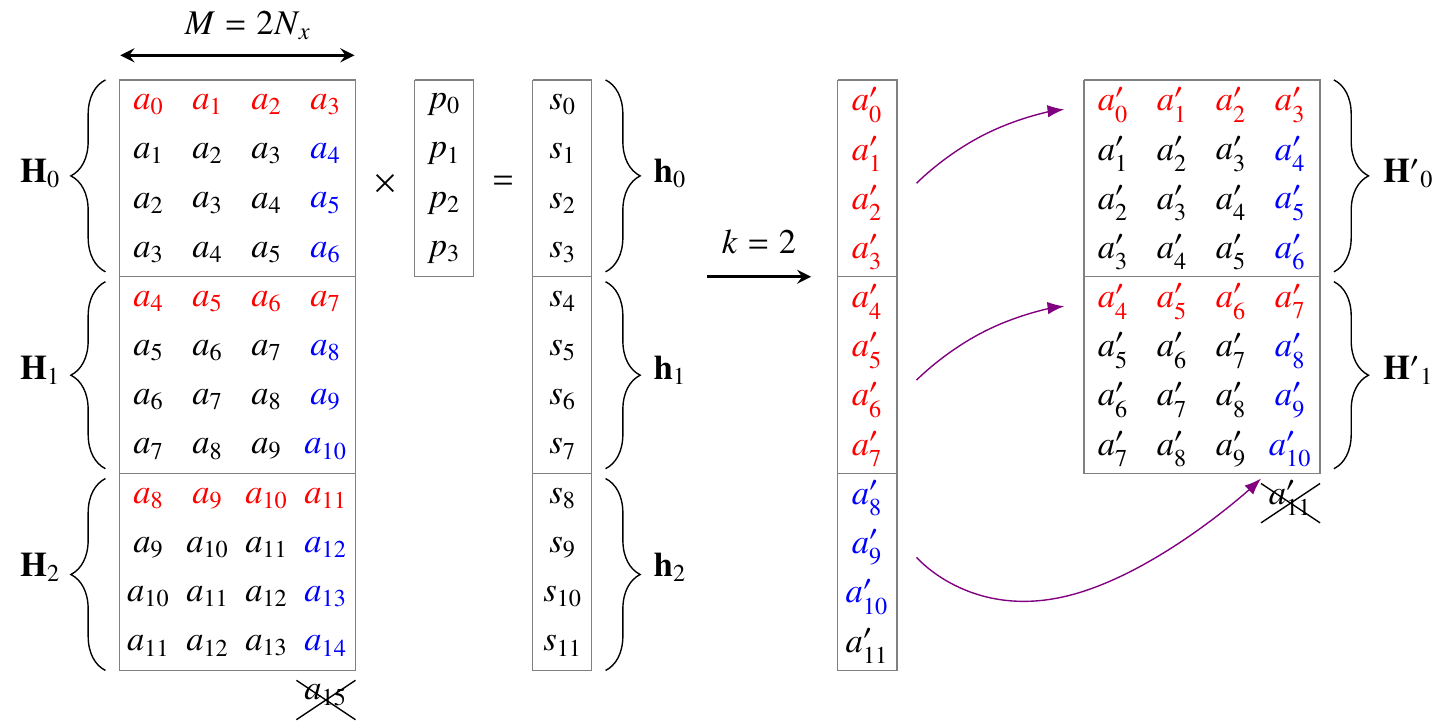}
\caption{
\label{Fig2SuperA}
Transition between two neighbour iterations is illustrated.
The left-hand-side matrix~$\mathbf{A}$ is multiplied 
by vector~$\mathbf{p}$ in a resulting vector~$\mathbf{s}$ 
according to~\eqref{pSiDefNote}. Explicit structure of matrix 
blocks~$\mathbf{H}_m$ in~\eqref{AdefHk} and 
vector blocks~$\mathbf{h}_m$ in \eqref{sHp} is shown.
Then entries of vector~$\mathbf{s}$ are multiplied by
corresponding factor~$e^{-w_k V(x_i,\tau_{n-k})\delta t}$
to produce the next iteration step~\eqref{Phi_iter_k}.
From a new vector~$\mathbf{a}'$ there formed
a new matrix~$\mathbf{A}'$ according to~\eqref{ADirectDef}.
The last point from the previous iteration is not needed
and is thrown out. Then the steps repeat for the next iteration.
}
\end{figure}

The matrix $\mathbf{A}$ is a Hankel matrix,
as follows from definition~\eqref{ADirectDef},
and consists of $k$ square blocks $\mathbf{H}_{m}$,
$0 \le m < k$, such that
\begin{equation}
\label{AdefHk}
\mathbf{A}^{\!T}=\left(\mathbf{H}_0, \mathbf{H}_1 \ldots \mathbf{H}_{k-1} \right).
\end{equation}
Here, every block $\mathbf{H}_m$ is a Hankel matrix as well \emph{generated} by
the upper row $\mathbf{l}^{T}_{m}$ and the right column 
$\mathbf{r}_{m+1}$ correspondingly:
\begin{equation*}
\mathbf{H}_m=\left[ \mathbf{l}^T_{m}, \mathbf{r}_{m+1} \right]_H,
\end{equation*}
(the notation $[\mathbf{a}, \mathbf{b} ]_{H}$ is 
introduced in Section~\ref{SectDefTech}, Definition~\ref{Def1HankelGen}),
where
\begin{equation}
\label{lrDef}
\begin{split}
\mathbf{l}^{T}_{m}&= (a_{i_0}, a_{i_0+1}, \ldots, a_{i_0+M-2}, a_{i_0+M-1}), \\
\mathbf{r}^{T}_{m}&= (a_{i_0}, a_{i_0+1}, \ldots, a_{i_0+M-2}),
\end{split}
\qquad i_0 = m \cdot M,
\qquad 0\le m < k,
\end{equation}
by the definition, see Figure~\ref{Fig2SuperA}.
It can also be represented as a sum of two 
anti-triangular\footnote{By anti-triangular matrix we call a matrix which is
triangular with respect to the anti-diagonal of the matrix.} Hankel matrices  
\begin{equation}
\label{HLRsum}
\mathbf{H}_m = \mathbf{L}_m+ \mathbf{R}_m, \qquad
\mathbf{L}_m=\left[\mathbf{l}^T_{m\cdot M}, \mathbf{0} \right]_{H}, \qquad
\mathbf{R}_m=\left[ \mathbf{0}^T, \mathbf{r}_{(m+1)\cdot M} \right]_H,
\end{equation}
where the upper-left $\mathbf{L}_m$ has nonzero anti-diagonal and 
the bottom-right $\mathbf{R}_m$ has zero anti-diagonal according to~\eqref{lrDef}.

Equation~\eqref{pSiDefNote} may be rewritten in the block form
(see again Figure~\ref{Fig2SuperA})
\begin{equation}
\label{sHp}
\mathbf{h}_m = \mathbf{H}_m \mathbf{p}, \qquad 
\mathbf{h}^T_m=\left( s_{m\cdot M}, s_{m\cdot M +1} \ldots s_{m\cdot M + M -1} \right).
\end{equation}
Here every block $\mathbf{H}_m$ is multiplied by 
the same vector~$\mathbf{p}$.
The number of matrix-vector multiplications can be reduced,
if the dimension $d$ of the linear span $\mathcal{H}=\left\{ \mathbf{H}_m \right\}_{m=0}^{k-1}$
is less than~$k$. 
Before estimation of the dimension we formulate some auxiliary lemmas
(proven in the~\ref{App:proveLem}).

\newtheorem{Lem}{Lemma}
\newdefinition{Rem}{Remark} 
\newtheorem{Th}{Theorem}
%
\begin{Lem}
\label{lem2}
Let $\{ \mathbf{u}_{i} \}_{i=0}^{r_1-1}$ be a basis set of span $\{ \mathbf{l}_{m} \}_{m=0}^{k-1}$,
 $r_1 \le k$, and let matrix $\mathbf{U}_i = \left[\mathbf{u}^T_{i}, \mathbf{0} \right]_{H}$.
Then $\{ \mathbf{U}_{i} \}_{i=0}^{r_1-1}$ is a basis set 
of span $\{ \mathbf{L}_{m} \}_{m=0}^{k-1}$ from~\eqref{HLRsum}.
\end{Lem}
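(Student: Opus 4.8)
The plan is to realize both families $\{\mathbf{L}_m\}$ and $\{\mathbf{U}_i\}$ as images under a single \emph{injective linear map} and then invoke the elementary principle that an injective linear map carries a basis of a subspace to a basis of its image. Concretely, I would introduce the map $\Psi \colon \mathbb{R}^{M} \to \mathbb{R}^{M\times M}$ defined by $\Psi(\mathbf{v}) = [\mathbf{v}^{T}, \mathbf{0}]_{H}$, i.e.\ the anti-triangular Hankel matrix whose upper-left anti-diagonal is generated by $\mathbf{v}$ and whose continuation column vanishes. By the definitions in~\eqref{HLRsum} and in the hypothesis we have exactly $\mathbf{L}_m = \Psi(\mathbf{l}_m)$ and $\mathbf{U}_i = \Psi(\mathbf{u}_i)$, so the lemma reduces entirely to describing how $\Psi$ acts on spans and on linearly independent sets.

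First I would verify that $\Psi$ is linear. Homogeneity, $\Psi(\alpha\mathbf{v}) = \alpha\,\Psi(\mathbf{v})$, is precisely the scaling identity~\eqref{Aalpfab} specialized to $\mathbf{b} = \mathbf{0}$. Additivity I would check entrywise from Definition~\ref{Def1HankelGen}: the $(i,j)$ entry of $[\mathbf{v}^{T}, \mathbf{0}]_{H}$ equals $v_{i+j}$ when $i+j < M$ and $0$ otherwise, and this assignment is manifestly additive in $\mathbf{v}$. Next I would establish injectivity: if $\Psi(\mathbf{v}) = \mathbf{0}$, then reading off the first row ($i=0$) of $[\mathbf{v}^{T}, \mathbf{0}]_{H}$ forces $\mathbf{v}^{T} = \mathbf{0}$, so $\ker\Psi = \{\mathbf{0}\}$. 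Thus $\Psi$ is a linear isomorphism onto its image.

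With that in hand the conclusion is immediate. Linearity gives $\mathrm{span}\{\mathbf{L}_m\}_{m=0}^{k-1} = \mathrm{span}\{\Psi(\mathbf{l}_m)\}_{m=0}^{k-1} = \Psi\!\left(\mathrm{span}\{\mathbf{l}_m\}_{m=0}^{k-1}\right)$. Since $\{\mathbf{u}_i\}_{i=0}^{r_1-1}$ is by assumption a basis of $\mathrm{span}\{\mathbf{l}_m\}$, and an injective linear map sends a basis of a subspace to a basis of its image, the set $\{\mathbf{U}_i\}_{i=0}^{r_1-1} = \{\Psi(\mathbf{u}_i)\}_{i=0}^{r_1-1}$ both spans $\Psi\!\left(\mathrm{span}\{\mathbf{l}_m\}\right) = \mathrm{span}\{\mathbf{L}_m\}$ and remains linearly independent. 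This is exactly the assertion of the lemma.

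I do not anticipate a genuine obstacle here: the mathematical content is just the recognition that $\mathbf{v}\mapsto[\mathbf{v}^{T},\mathbf{0}]_{H}$ is an injective linear embedding. The only point requiring any care is confirming linearity and injectivity \emph{directly} from Definition~\ref{Def1HankelGen} and~\eqref{Aalpfab}, since those two facts are precisely what license the otherwise abstract ``isomorphisms preserve bases'' step; once they are checked, every remaining claim is a one-line consequence.
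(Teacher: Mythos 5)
Your proposal is correct and is essentially the paper's own argument: the paper expands $\mathbf{l}_{m}=\sum_{i=0}^{r_1-1}\alpha_{mi}\mathbf{u}_i$ and applies the identity~\eqref{Aalpfab} (plus entrywise additivity) to conclude $\mathbf{L}_m=\sum_{i=0}^{r_1-1}\alpha_{mi}\mathbf{U}_i$, which is precisely your linearity of the map $\Psi(\mathbf{v})=[\mathbf{v}^{T},\mathbf{0}]_{H}$ in action. Your write-up is in fact slightly more complete than the paper's two-line proof: by making the injectivity of $\Psi$ explicit you also establish the linear independence of $\{\mathbf{U}_i\}_{i=0}^{r_1-1}$ and the equality (not just one inclusion) of the spans, points the paper leaves implicit.
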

\begin{Lem}
\label{lem3}
Let $\{ \mathbf{w}_{i} \}_{i=0}^{r_2-1}$ be a basis set of span $\{ \mathbf{r}_{m} \}_{m=1}^{k}$,
$r_2 \le k$, and let matrix $\mathbf{W}_i = \left[\mathbf{0}^T, \mathbf{w}_{i} \right]_{H}$.
Then $\{ \mathbf{W}_{i} \}_{i=0}^{r_2-1}$ is a basis set 
of span $\{ \mathbf{R}_{m} \}_{m=0}^{k-1}$ from~\eqref{HLRsum}.
\end{Lem}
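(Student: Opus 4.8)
The plan is to recognize Lemma~\ref{lem3} as the exact column-analogue of Lemma~\ref{lem2}, with the role of the generating \emph{row} map $\mathbf{u}\mapsto[\mathbf{u}^T,\mathbf{0}]_H$ taken over by the generating \emph{column} map. Accordingly, I would carry the whole argument on the single linear map $\Psi:\mathbb{R}^{M-1}\to\mathbb{R}^{M\times M}$ defined by $\Psi(\mathbf{w})=[\mathbf{0}^T,\mathbf{w}]_H$, i.e.\ the map that sends a column vector to the anti-triangular Hankel block it generates when the generating row is zero. The key structural fact is that by~\eqref{HLRsum} and~\eqref{lrDef} we have $\mathbf{R}_m=[\mathbf{0}^T,\mathbf{r}_{m+1}]_H=\Psi(\mathbf{r}_{m+1})$ and, by definition of $\mathbf{W}_i$, that $\mathbf{W}_i=\Psi(\mathbf{w}_i)$. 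Thus every object in the statement is the $\Psi$-image of the corresponding vector, and the lemma reduces to the elementary fact that an \emph{injective} linear map carries a basis of a subspace to a basis of the image of that subspace.

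First I would verify that $\Psi$ is linear. This follows from the construction of Hankel matrices from their generators: scaling is~\eqref{Aalpfab} with $\alpha$ applied to the column only (the zero row being unaffected), and additivity $[\mathbf{0}^T,\mathbf{w}_1]_H+[\mathbf{0}^T,\mathbf{w}_2]_H=[\mathbf{0}^T,\mathbf{w}_1+\mathbf{w}_2]_H$ is immediate entrywise, since in the Hankel layout of Definition~\ref{Def1HankelGen} each entry of the generating column $\mathbf{w}$ occupies a fixed, disjoint set of positions. Next I would check that $\Psi$ is injective: reading off the last column of $\Psi(\mathbf{w})=[\mathbf{0}^T,\mathbf{w}]_H$, namely $(0,w_0,w_1,\ldots,w_{M-2})^T$, recovers $\mathbf{w}$ uniquely, so $\ker\Psi=\{\mathbf{0}\}$.

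With linearity and injectivity in hand, the conclusion is mechanical. Because $\Psi$ is linear, $\operatorname{span}\{\mathbf{R}_m\}_{m=0}^{k-1}=\operatorname{span}\{\Psi(\mathbf{r}_{m+1})\}_{m=0}^{k-1}=\Psi\bigl(\operatorname{span}\{\mathbf{r}_m\}_{m=1}^{k}\bigr)$, the reindexing $m\mapsto m+1$ matching the range $1\le m\le k$ assumed for the $\mathbf{r}_m$. Since $\{\mathbf{w}_i\}_{i=0}^{r_2-1}$ spans $\operatorname{span}\{\mathbf{r}_m\}_{m=1}^{k}$, their images $\{\mathbf{W}_i\}_{i=0}^{r_2-1}$ span $\Psi$ of that space, which is exactly $\operatorname{span}\{\mathbf{R}_m\}_{m=0}^{k-1}$; and since the $\{\mathbf{w}_i\}$ are linearly independent and $\Psi$ is injective, the $\{\mathbf{W}_i\}$ remain linearly independent. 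Hence $\{\mathbf{W}_i\}_{i=0}^{r_2-1}$ is a basis, as claimed.

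I do not anticipate a genuine obstacle here; the only point requiring care is purely bookkeeping, namely confirming that the map $\Psi$ is well defined and injective \emph{as a map on the generating column alone} once the generating row is frozen at $\mathbf{0}$, and that the index shift between $\{\mathbf{r}_m\}_{m=1}^{k}$ and $\{\mathbf{R}_m\}_{m=0}^{k-1}$ is handled consistently. Everything else is the standard transport of a basis through an injective linear isomorphism onto its image, so the proof is essentially identical in spirit to that of Lemma~\ref{lem2}.
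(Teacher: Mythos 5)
Your proof is correct and takes essentially the same route as the paper: both arguments rest on the linearity of the generating map $\mathbf{w}\mapsto\left[\mathbf{0}^T,\mathbf{w}\right]_H$, so that each $\mathbf{R}_m=\left[\mathbf{0}^T,\mathbf{r}_{m+1}\right]_H$ inherits the expansion coefficients of $\mathbf{r}_{m+1}$ in the basis $\{\mathbf{w}_i\}$, giving $\mathbf{R}_m=\sum_i \beta_{mi}\mathbf{W}_i$. The only difference is one of completeness, in your favor: you also verify injectivity of the map to conclude linear independence of the $\mathbf{W}_i$, a point the paper's one-line proof leaves implicit.
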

\begin{Lem}
\label{lem4}
Let $\{ \mathbf{u}_{i} \}_{i=0}^{r_1-1}$ be a basis set of span 
$\left\{ \mathbf{l}_{m} \right\}_{m=0}^{k}$, 
such that $\mathbf{u}_{i}^T=(\mathbf{w}_i^T, u_{i,(M-1)})$
according to~\eqref{abColDef}.
Then $\{ \mathbf{w}_{i} \}_{i=0}^{r_1-1}$ is a basis set of
span $\{ \mathbf{r}_{m} \}_{m=1}^{k}$.
\end{Lem}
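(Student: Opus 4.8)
The statement relates two spanning sets built from the same data. Recall from the construction that the vectors $\mathbf{l}_m$ and $\mathbf{r}_m$ are extracted from the same underlying sequence $\{a_i\}$ via~\eqref{lrDef}: the row generator $\mathbf{l}^T_m = (a_{i_0}, \ldots, a_{i_0+M-1})$ has length $M$, while the column generator $\mathbf{r}^T_m = (a_{i_0}, \ldots, a_{i_0+M-2})$ has length $M-1$ and is exactly the truncation of $\mathbf{l}_m$ obtained by dropping its last entry. But there is an index shift to track: with $i_0 = m \cdot M$, the first $M-1$ entries of $\mathbf{l}_m$ are $a_{mM}, \ldots, a_{mM+M-2}$, whereas $\mathbf{r}_{m+1}$ starts at $(m+1)M$. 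The cleanest relation is the one stated in~\eqref{abColDef} and used in~\eqref{lrDef}: $\mathbf{r}_m$ is the subvector consisting of the first $M-1$ coordinates of $\mathbf{l}_m$, so that $\mathbf{l}_m^T = (\mathbf{r}_m^T, a_{mM+M-1})$.

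The plan is to exploit the fact that truncation (dropping the last coordinate) is a \emph{linear} map $P : \mathbb{R}^M \to \mathbb{R}^{M-1}$, and that it carries $\mathbf{l}_m$ to $\mathbf{r}_m$. First I would set up this projection explicitly and record $P\mathbf{l}_m = \mathbf{r}_m$ for each $m$ in the relevant index range, together with the hypothesis $P\mathbf{u}_i = \mathbf{w}_i$, which is precisely what $\mathbf{u}_i^T = (\mathbf{w}_i^T, u_{i,(M-1)})$ asserts. Since $\{\mathbf{u}_i\}_{i=0}^{r_1-1}$ spans $\{\mathbf{l}_m\}_{m=0}^{k}$, applying the linear map $P$ to any expansion $\mathbf{l}_m = \sum_i c_{mi}\mathbf{u}_i$ immediately gives $\mathbf{r}_m = P\mathbf{l}_m = \sum_i c_{mi}\mathbf{w}_i$. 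Hence every $\mathbf{r}_m$ lies in $\operatorname{span}\{\mathbf{w}_i\}$, so $\operatorname{span}\{\mathbf{r}_m\}_{m=1}^{k} \subseteq \operatorname{span}\{\mathbf{w}_i\}_{i=0}^{r_1-1}$, which is the spanning half of the claim.

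For the reverse containment I would argue that each $\mathbf{w}_i$ itself belongs to $\operatorname{span}\{\mathbf{r}_m\}$. Since $\{\mathbf{u}_i\}$ is a basis (hence linearly independent), and the $\mathbf{u}_i$ are linear combinations of the $\mathbf{l}_m$, applying $P$ shows each $\mathbf{w}_i = P\mathbf{u}_i$ is the same linear combination of the $\mathbf{r}_m$; this yields $\operatorname{span}\{\mathbf{w}_i\} \subseteq \operatorname{span}\{\mathbf{r}_m\}$, giving equality of the two spans. To upgrade ``spanning set'' to ``basis set,'' it remains to confirm that the $\{\mathbf{w}_i\}_{i=0}^{r_1-1}$ are linearly independent; the cardinality $r_1$ is already the dimension of $\operatorname{span}\{\mathbf{l}_m\}$, so a dimension count suffices once independence is established. \textbf{The main obstacle} is exactly this independence step: truncation $P$ can in principle collapse independent vectors $\mathbf{u}_i$ to dependent images $\mathbf{w}_i$ if the discarded last coordinates $u_{i,(M-1)}$ carried the only distinguishing information. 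I expect the resolution to rest on showing $\dim\operatorname{span}\{\mathbf{r}_m\} = \dim\operatorname{span}\{\mathbf{l}_m\}$ for the Hankel-structured data at hand — i.e. that the truncation does not lose rank — which is the genuinely content-bearing part of the lemma and the place where the specific structure of the generating sequence, rather than pure linear algebra, must enter.
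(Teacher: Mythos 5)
The containment you prove in your second paragraph is, in substance, the paper's entire proof of this lemma: the paper expands $\mathbf{l}_m=\sum_{i=0}^{r_1-1}\gamma_{mi}\mathbf{u}_i$, splits each vector into its first $M-1$ coordinates and its last one as in~\eqref{abColDef}, and reads off $\mathbf{r}_m=\sum_{i=0}^{r_1-1}\gamma_{mi}\mathbf{w}_i$. That is all: the published proof establishes only $\mathrm{span}\,\{\mathbf{r}_m\}_{m=1}^{k}\subseteq\mathrm{span}\,\{\mathbf{w}_i\}_{i=0}^{r_1-1}$ and never addresses the independence of the $\mathbf{w}_i$ or the reverse inclusion.

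The obstacle you flag at the end, namely showing that truncation does not lose rank, is one you cannot overcome, because the claim is false, Hankel structure notwithstanding. Take $M=2$, $k=1$, and the admissible positive samples $a_0=a_1=a_2=1$, $a_3=2$, so that by~\eqref{lrDef} $\mathbf{l}_0=(1,1)^T$, $\mathbf{l}_1=(1,2)^T$, $\mathbf{r}_1=(1)\in\mathbb{R}^1$. Then $\mathbf{u}_0=\mathbf{l}_0$, $\mathbf{u}_1=\mathbf{l}_1$ is a perfectly valid basis of $\mathrm{span}\,\{\mathbf{l}_m\}_{m=0}^{1}=\mathbb{R}^2$, but the truncations $\mathbf{w}_0=\mathbf{w}_1=(1)$ are linearly dependent, hence not a basis of $\mathrm{span}\,\{\mathbf{r}_1\}=\mathbb{R}$. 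This is exactly the failure mode you anticipated; note that it does not afflict Lemmas~\ref{lem2} and~\ref{lem3}, whose maps $\mathbf{v}\mapsto[\mathbf{v}^T,\mathbf{0}]_H$ and $\mathbf{v}\mapsto[\mathbf{0}^T,\mathbf{v}]_H$ are injective, whereas truncation is not. Your reverse inclusion also has an index defect: expanding $\mathbf{u}_i$ over $\{\mathbf{l}_m\}_{m=0}^{k}$ and truncating only places $\mathbf{w}_i$ in $\mathrm{span}\,\{\mathbf{r}_m\}_{m=0}^{k}$, which involves $\mathbf{r}_0$, a vector outside the target family $\{\mathbf{r}_m\}_{m=1}^{k}$. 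None of this damages the paper's development, because the lemma is consumed downstream only through its spanning half: Theorem~\ref{TheoremBasisRank}'s operative assertion is that $\mathrm{span}\,\{\mathbf{H}_m\}$ is \emph{contained} in $\mathrm{span}\,\{\mathbf{Q}_i\}$, and the expansion~\eqref{alphaBetDecomp} behind the complexity count of Theorem~\ref{Theorem2} needs nothing more (correspondingly, ``dimension equal to $2r$'' in Theorem~\ref{TheoremBasisRank} should be read as ``at most $2r$''). So the correct reading of ``basis set'' here is ``generating set,'' and with that reading your plan is already a complete proof, identical to the paper's, once you stop at the end of your second paragraph and drop the attempt to upgrade spanning to a basis.
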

Let us define a basis set $\left\{\mathbf{Q} \right\}_{i=0}^{2r-1}$ as follows
\begin{equation}
\label{QDefUW}
\mathbf{Q}_i=
\begin{cases}
\mathbf{U}_i, & 0 \le i < r \\
\mathbf{W}_{i - r}, & r \le i < 2r
\end{cases}
\end{equation}
%
%
An obvious corollary of the previous Lemma is the following Theorem.
\begin{Th}
\label{TheoremBasisRank}
The dimension of the linear span of matrices $ \left\{\mathbf{H}_m \right\}_{m=0}^{k-1}$ is equal to $2r$.
Moreover, it is contained in the linear span of the matrices $\left\{\mathbf{Q}_i \right\}_{i=0}^{2r-1}$ defined in \eqref{QDefUW}.
\end{Th}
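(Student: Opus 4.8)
The plan is to derive the statement as a combination of the additive splitting $\mathbf{H}_m=\mathbf{L}_m+\mathbf{R}_m$ from~\eqref{HLRsum} with Lemmas~\ref{lem2}--\ref{lem4}. First I would pin down the number $r$: taking a basis $\{\mathbf{u}_i\}_{i=0}^{r-1}$ of the span of the $\mathbf{l}_m$, Lemma~\ref{lem4} shows that its entrywise truncation $\{\mathbf{w}_i\}_{i=0}^{r-1}$ (via~\eqref{abColDef}) is a basis of the span of the $\mathbf{r}_m$, so the two families share a common rank $r$. This is the one place where the fine relation between $\mathbf{l}_m$ and $\mathbf{r}_m$ in~\eqref{lrDef} is used, and it is exactly what makes the collection $\{\mathbf{Q}_i\}_{i=0}^{2r-1}$ of~\eqref{QDefUW} well defined with precisely $2r$ members.

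For the inclusion I would argue blockwise. By Lemma~\ref{lem2} each $\mathbf{L}_m$ lies in $\mathrm{span}\{\mathbf{U}_i\}_{i=0}^{r-1}$ and by Lemma~\ref{lem3} each $\mathbf{R}_m$ lies in $\mathrm{span}\{\mathbf{W}_i\}_{i=0}^{r-1}$; summing, every $\mathbf{H}_m=\mathbf{L}_m+\mathbf{R}_m$ lies in $\mathrm{span}\{\mathbf{Q}_i\}_{i=0}^{2r-1}$. This proves the inclusion $\mathrm{span}\{\mathbf{H}_m\}\subseteq\mathrm{span}\{\mathbf{Q}_i\}$ and the upper bound $\dim\mathrm{span}\{\mathbf{H}_m\}\le 2r$.

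The next step is to see that $\{\mathbf{Q}_i\}$ is genuinely $2r$-dimensional. Here the key observation is that $\mathbf{U}_i=[\mathbf{u}_i^T,\mathbf{0}]_H$ and $\mathbf{W}_j=[\mathbf{0}^T,\mathbf{w}_j]_H$ have complementary supports: $\mathbf{U}_i$ is nonzero only on and above the anti-diagonal (entries $(p,q)$ with $p+q\le M-1$), whereas $\mathbf{W}_j$ is nonzero only strictly below it ($p+q\ge M$). Hence $\mathrm{span}\{\mathbf{U}_i\}\cap\mathrm{span}\{\mathbf{W}_j\}=\{\mathbf{0}\}$, the sum is direct, and since the $\mathbf{U}_i$ and the $\mathbf{W}_j$ are bases the $2r$ matrices $\mathbf{Q}_i$ are linearly independent, so $\dim\mathrm{span}\{\mathbf{Q}_i\}=2r$. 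The same disjoint-support argument shows that any relation $\sum_m c_m\mathbf{H}_m=\mathbf{0}$ forces $\sum_m c_m\mathbf{L}_m=\mathbf{0}$ and $\sum_m c_m\mathbf{R}_m=\mathbf{0}$ separately.

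The step I expect to be the main obstacle is the matching lower bound $\dim\mathrm{span}\{\mathbf{H}_m\}\ge 2r$, i.e.\ the reverse inclusion $\mathrm{span}\{\mathbf{Q}_i\}\subseteq\mathrm{span}\{\mathbf{H}_m\}$. The relation-splitting above only says that the relations among the $\mathbf{H}_m$ are those holding simultaneously for the $\mathbf{L}$- and $\mathbf{R}$-families, which by itself gives merely $\dim\ge r$. To reach $2r$ I would study the block coefficients $(\mathbf{l}_m,\mathbf{r}_{m+1})$ and show that the projections of $\mathrm{span}\{\mathbf{H}_m\}$ onto the two triangular supports are surjective onto $\mathrm{span}\{\mathbf{L}_m\}$ and $\mathrm{span}\{\mathbf{R}_m\}$ independently. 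This holds when the block data are in general position but can degenerate in special configurations (for instance, if the $\mathbf{l}_m$ form a single geometric family the two contributions become proportional and the dimension drops below $2r$); the equality should therefore be read together with the mild non-degeneracy supplied by the generic construction~\eqref{ADirectDef}, while the part the algorithm actually relies on is the robust inclusion and the bound $\dim\le 2r$ established above.
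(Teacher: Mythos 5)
Your argument follows essentially the same route as the paper's proof: the splitting $\mathbf{H}_m=\mathbf{L}_m+\mathbf{R}_m$ from~\eqref{HLRsum}, Lemma~\ref{lem2} for the $\mathbf{L}$-part, Lemma~\ref{lem4} chained with Lemma~\ref{lem3} for the $\mathbf{R}$-part, and the trivial intersection of the $\mathbf{U}$- and $\mathbf{W}$-spans to conclude that $\{\mathbf{Q}_i\}_{i=0}^{2r-1}$ spans a $2r$-dimensional space. Your disjoint-support argument (the $\mathbf{U}_i$ live on entries $(p,q)$ with $p+q\le M-1$, the $\mathbf{W}_j$ on $p+q\ge M$) is simply a more explicit justification of the paper's one-line claim that the two subspaces ``contain only zero matrix in common,'' so on this part you are, if anything, more careful than the original.

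The obstacle you flag in your last paragraph is real, but it is a gap in the paper's own proof (and, read literally, in the statement), not a deficiency of your attempt relative to it. The paper proves exactly what you prove: $\mathrm{span}\{\mathbf{H}_m\}\subseteq\mathrm{span}\{\mathbf{Q}_i\}$ and $\dim\mathrm{span}\{\mathbf{Q}_i\}=2r$; its closing sentence ``so the dimension of the basis is~$2r$'' refers to the basis $\{\mathbf{Q}_i\}$, and the lower bound $\dim\mathrm{span}\{\mathbf{H}_m\}\ge 2r$ is never established. Indeed it cannot be in general: your degeneracy scenario is a valid counterexample. Take all $\mathbf{l}_m$ equal to a single nonzero vector; then $r=1$, all the $\mathbf{r}_m$ coincide, hence all the $\mathbf{H}_m$ coincide, and $\dim\mathrm{span}\{\mathbf{H}_m\}=1<2=2r$. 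So the equality in the theorem should be read as an upper bound ($\le 2r$), or under a genericity assumption on the data~\eqref{ADirectDef}; what Theorem~\ref{Theorem2} and the algorithm of Section~\ref{AlgorithmSect} actually use is only the inclusion and the $2r$ bound, exactly as you observe.
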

\begin{proof}
The matrix $\mathbf{H}_m$ can be written as a sum~\eqref{HLRsum}, 
$\mathbf{H}_m=\mathbf{L}_m+\mathbf{R}_m$. 
According to \emph{Lemma~\ref{lem2}}, the set $\left \{\mathbf{U}_i \right\}_{i=0}^{r-1}$ is 
a basis set of the span $\left \{ \mathbf{L}_m \right \}_{m=0}^{k-1}$.
By \emph{Lemma~\ref{lem4}},
$\left\{\mathbf{w}_i\right\}_{i=0}^{r-1} $ is a basis set of the span
$\left\{\mathbf{r}_{m}\right\}_{m=0}^{r-1} $, and by
\emph{Lemma~\ref{lem3}},
$\left\{ \mathbf{W} \right\}_{i=0}^{r-1}$ 
is a basis set of 
$\left\{ \mathbf{R} \right\}_{m=0}^{k-1}$. 
The subspaces $\left\{ \mathbf{U}_i \right\}_{i=0}^{r-1}$  and  
$\left\{ \mathbf{W}_i \right\}_{i=0}^{r-1}$  contain only zero matrix in common, 
so the dimension of the basis is~$2r$.
\end{proof}
%
\begin{Lem}
\label{Lem4FFT}
Let $\{ \mathbf{u}_{i} \}_{i=0}^{r-1}$ be a basis set of span 
$\left\{ \mathbf{l}_{m} \right\}_{m=0}^{k}$.
Then for basis matrices
$\left\{\mathbf{Q}_i \right\}_{i=0}^{2r-1}$ defined in~\eqref{QDefUW}
the computation of the matrix-by-vector products 
\begin{equation}
\label{ktBas}
\mathbf{k}_i=\mathbf{U}_{i}\mathbf{p}, \qquad 
\mathbf{t}_i=\mathbf{W}_{i}\mathbf{p},
\end{equation}
costs $\mathcal{O}(M\log M)$ flops for a fixed $0 \le i < r$.
\end{Lem}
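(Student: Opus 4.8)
The plan is to reduce each matrix-by-vector product in~\eqref{ktBas} to a one-dimensional convolution in the sense of Definition~\ref{Def2Conv} and then invoke the Fast Fourier Transform. The starting observation is the one already recorded after Definition~\ref{Def1HankelGen}: multiplication by a Hankel matrix \emph{is} the convolution operation. Since $\mathbf{U}_i=[\mathbf{u}_i^T,\mathbf{0}]_H$ and $\mathbf{W}_i=[\mathbf{0}^T,\mathbf{w}_i]_H$ are themselves Hankel matrices (the anti-triangular summands of~\eqref{HLRsum}), each of $\mathbf{k}_i=\mathbf{U}_i\mathbf{p}$ and $\mathbf{t}_i=\mathbf{W}_i\mathbf{p}$ is a convolution, and the whole task is to exhibit it as such and count the flops of the FFT evaluation. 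Note that the basis hypothesis is not actually needed here: only the anti-triangular Hankel structure of $\mathbf{U}_i$ and $\mathbf{W}_i$ enters.

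First I would write out the components directly from Definition~\ref{Def1HankelGen}. Because the column generating $\mathbf{U}_i$ is zero, its entries vanish strictly below the anti-diagonal, so
\begin{equation*}
(\mathbf{k}_i)_l=\sum_{j=0}^{M-1-l}u_{i,\,l+j}\,p_j,\qquad 0\le l<M,
\end{equation*}
which is exactly the convolution $\mathbf{u}_i\circ\mathbf{p}$ of Definition~\ref{Def2Conv} restricted to its first $M$ entries. Symmetrically, because the row generating $\mathbf{W}_i$ is zero, its entries vanish on and above the anti-diagonal, giving
\begin{equation*}
(\mathbf{t}_i)_l=\sum_{m=0}^{l-1}w_{i,\,m}\,p_{\,m+M-l},\qquad 0\le l<M,
\end{equation*}
again a partial convolution, i.e.\ a prescribed slice of the linear convolution of two vectors of length $\mathcal{O}(M)$. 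The anti-triangular structure of $\mathbf{U}_i$ and $\mathbf{W}_i$ is thus automatically encoded by the zeros in their generators, so no special casing is needed beyond reading off the correct range of output indices.

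Next I would invoke the standard FFT evaluation of a linear convolution. Zero-padding $\mathbf{u}_i$ (respectively $\mathbf{w}_i$) and $\mathbf{p}$ to a common length $M'$, a power of two with $2M-1\le M'\le 4M$, the convolution is obtained as one inverse transform of the entrywise product of two forward transforms (reversing one argument converts the correlation-type sum above into an ordinary convolution). The two forward FFTs and the inverse FFT cost $\mathcal{O}(M'\log M')=\mathcal{O}(M\log M)$ flops, the pointwise multiplication costs $\mathcal{O}(M)$, and extracting the $M$ retained entries is $\mathcal{O}(M)$. Hence for each fixed $0\le i<r$ both $\mathbf{k}_i$ and $\mathbf{t}_i$ are produced in $\mathcal{O}(M\log M)$ flops, which is the claim. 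Equivalently, one may embed the $M\times M$ Hankel matrix into a $2M\times 2M$ circulant and apply a single FFT pair, with the same count.

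The only genuinely delicate point, and the step I would treat most carefully, is the index bookkeeping together with the choice of padding: the circular convolution produced by the FFT must be long enough that no wrap-around term contaminates the $M$ output entries we keep, and one must verify that the window $\{0,\dots,M-1\}$ for $\mathbf{k}_i$ and the shifted window for $\mathbf{t}_i$ select precisely the sums written above. Once $M'\ge 2M-1$ is fixed these wrap-around contributions vanish and the identification of the two slices is routine, so the complexity estimate follows.
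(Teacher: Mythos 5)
Your proof is correct, and its core reduction is the same one the paper uses: multiplication by an anti-triangular Hankel matrix is a (partial) convolution, which the FFT evaluates in $\mathcal{O}(M\log M)$ flops. The genuine difference is in how the two products are organized. You compute $\mathbf{k}_i=\mathbf{U}_i\mathbf{p}$ and $\mathbf{t}_i=\mathbf{W}_i\mathbf{p}$ as two \emph{separate} padded convolutions, which is why you can correctly observe that the basis hypothesis is never used. The paper instead stacks the two factors into a single $2M\times M$ Hankel matrix $\mathbf{G}_i$ with $\mathbf{W}_i$ on top of $\mathbf{U}_i$, and notes that $\mathbf{G}_i\mathbf{p}$ is \emph{one} convolution $\mathbf{u}_i\circ\hat{\mathbf{p}}$, where $\hat{\mathbf{p}}=(p_{M-1},\ldots,p_1,p_0)^T$ is the reversed weight vector. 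That stacking is legitimate only because $\mathbf{w}_i$ is the leading subvector of $\mathbf{u}_i$ — the relation supplied by Lemma~\ref{lem4} and the construction~\eqref{QDefUW} — so the hypothesis you discarded is precisely what powers the paper's shortcut. What the paper's variant buys is a constant-factor saving (one FFT pass per basis vector instead of two) and, more importantly, the object $\mathbf{g}_i=(\mathbf{t}_i,\mathbf{k}_i)^T$ produced in a single sweep, which is exactly what the algorithm of Section~\ref{AlgorithmSect} stores and reuses. What your variant buys is generality and rigor at the edges: it applies to arbitrary anti-triangular Hankel factors with unrelated generators, and it makes explicit the index windows, the padding length $M'\ge 2M-1$, and the absence of wrap-around, all of which the paper's terse proof leaves implicit.
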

\begin{proof}
Consider a Hankel matrix
\begin{equation*}
\mathbf{G}_i=
\begin{pmatrix}
\mathbf{W}_{i} \\
\mathbf{U}_{i}
\end{pmatrix}.
\end{equation*}
A product $\mathbf{G}_i \mathbf{p}$
is a result of the convolution $\mathbf{u}_{i} \circ \mathbf{\hat p}$,
which can be done by the FFT ~\cite{brigham-fft-1988, nuss-fft-1981}
procedure in $\mathcal{O}(M \log M)$ flops
for a fixed $0 \le i < r$.
The vector $\mathbf{\hat p}=(p_{M-1},\ldots, , p_{1}, p_{0})^T$ is taken in the reverse order.
\end{proof}
Once the basis $\left\{ \mathbf{Q}_i \right\}_{i=0}^{d-1}$ for the span of $\mathcal{H}$ is found, 
the complexity of the multiplication $\mathbf{A}\mathbf{p}$ in~\eqref{pSiDefNote} can be estimated 
as follows.
%
%
\begin{Th}
\label{Theorem2}
Let the set $\left\{ \mathbf{Q}_i \right\}_{i=0}^{2r-1}$ defined in~\eqref{QDefUW} be
a basis set of the linear span $\mathcal{H}$ 
generated by the set of Hankel matrices~$\mathbf{H}_m$ defined in~\eqref{AdefHk}.
Then the computation of any $K_s$ elements $s_i$  of the vector $\mathbf{s}$~\eqref{pSiDefNote}
costs $\mathcal{O}(rM\log M + r^2 M)$ flops for $K_s =\mathcal{O}(r M)$.
\end{Th}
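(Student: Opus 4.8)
The plan is to exploit Theorem~\ref{TheoremBasisRank} to replace the $k$ distinct block multiplications $\mathbf{h}_m=\mathbf{H}_m\mathbf{p}$ by a fixed number of precomputed products against the $2r$ basis matrices, followed by a cheap recombination. First I would precompute, once and for all, the $2r$ vectors
\begin{equation*}
\mathbf{k}_i=\mathbf{U}_i\mathbf{p}, \qquad \mathbf{t}_i=\mathbf{W}_i\mathbf{p}, \qquad 0\le i<r,
\end{equation*}
exactly as in~\eqref{ktBas}. By Lemma~\ref{Lem4FFT} each index $i$ costs $\mathcal{O}(M\log M)$ flops, so assembling all of them costs $\mathcal{O}(rM\log M)$ flops and $\mathcal{O}(rM)$ memory.

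Next I would express every block through the basis. Writing the generating subvectors $\mathbf{l}_m$ and $\mathbf{r}_{m+1}$ of $\mathbf{H}_m$ in the bases $\{\mathbf{u}_i\}$ and $\{\mathbf{w}_i\}$ respectively, say $\mathbf{l}_m=\sum_{i}\alpha_{mi}\mathbf{u}_i$ and $\mathbf{r}_{m+1}=\sum_{i}\beta_{mi}\mathbf{w}_i$, the scaling rule~\eqref{Aalpfab} together with Lemmas~\ref{lem2} and~\ref{lem3} turns the splitting $\mathbf{H}_m=\mathbf{L}_m+\mathbf{R}_m$ from~\eqref{HLRsum} into
\begin{equation*}
\mathbf{h}_m=\mathbf{H}_m\mathbf{p}=\sum_{i=0}^{r-1}\alpha_{mi}\,\mathbf{k}_i+\sum_{i=0}^{r-1}\beta_{mi}\,\mathbf{t}_i,
\end{equation*}
so that each requested entry $s_{mM+\ell}$ is the single linear combination $\sum_i\alpha_{mi}(\mathbf{k}_i)_\ell+\sum_i\beta_{mi}(\mathbf{t}_i)_\ell$ of the precomputed vectors, costing $\mathcal{O}(r)$ flops. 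Since we are asked for only $K_s=\mathcal{O}(rM)$ entries, this recombination step costs $\mathcal{O}(rK_s)=\mathcal{O}(r^2M)$ flops, and adding the two contributions yields the claimed $\mathcal{O}(rM\log M+r^2M)$.

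The main obstacle is not the FFT accounting but making sure the coefficients $\alpha_{mi},\beta_{mi}$ are available at $\mathcal{O}(r)$ cost per entry. These are exactly the expansion coefficients produced alongside the basis by the skeleton/cross approximation (\ref{CrossApp}) of the array~\eqref{ADirectDef}, so they may be assumed given together with $\{\mathbf{Q}_i\}$; otherwise one would have to add the one-time cost of projecting each $\mathbf{l}_m,\mathbf{r}_{m+1}$ onto the basis. A secondary point worth verifying is that the element-wise (rather than block-wise) assembly makes the bound $\mathcal{O}(r^2M)$ insensitive to how the $K_s$ requested indices are distributed among the $k$ blocks, so that no contiguity assumption on the selected $s_i$ is required.
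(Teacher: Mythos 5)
Your proposal is correct and follows essentially the same route as the paper's own proof: precompute the $2r$ products $\mathbf{k}_i=\mathbf{U}_i\mathbf{p}$, $\mathbf{t}_i=\mathbf{W}_i\mathbf{p}$ via Lemma~\ref{Lem4FFT} in $\mathcal{O}(rM\log M)$ flops, then assemble each requested entry as the $\mathcal{O}(r)$-cost linear combination $\sum_i \alpha_{mi}k_{i\ell}+\beta_{mi}t_{i\ell}$, giving $\mathcal{O}(r^2M)$ for $K_s=\mathcal{O}(rM)$ entries. Your added remarks on the availability of the coefficients $\alpha_{mi},\beta_{mi}$ (supplied by the cross approximation) and on the distribution-independence of the per-entry accounting only make explicit what the paper assumes implicitly.
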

\begin{proof}
Indeed, by the assumption 
$\mathbf{H}_m=\sum_{i=0}^{2r-1} c_{mi} \mathbf{Q}_{i}$
for each $m$, $0\le m < k$.
The complexity of the product $\mathbf{Q}_{i} \mathbf{p}$, $0\le i < 2r$ for a 
fixed~$i$ is $\mathcal{O}( M\log M)$ flops
by Lemma~\ref{Lem4FFT}.
The computation of such products for all $i$
takes  $\mathcal{O}(r M\log M)$ flops.

The vector $\mathbf{h}_m$,
which is a subvector of~$\mathbf{s}$,
is represented via few matrix-by-vector products~\eqref{ktBas}
as follows
\begin{equation}
\label{alphaBetDecomp}
\mathbf{h}_{m}=\mathbf{H}_{m}\mathbf{p}=
\mathbf{L}_{m}\mathbf{p} + \mathbf{R}_{m}\mathbf{p}=
\sum_{i=0}^r  \alpha_{mi} \mathbf{U}_{i}\mathbf{p} + 
\beta_{mi}\mathbf{W}_{i}\mathbf{p}=
\sum_{i=0}^r  \alpha_{mi} \mathbf{k}_{i} +  \beta_{mi}\mathbf{t}_{i}.
\end{equation}
The computation of its $i$th component $h_{mi}$ takes $\mathcal{O}(r)$ flops for any $m$. 
Computation of $\mathcal{O}(r M)$ components $s_j$
of the vector $\mathbf{s}$, which are also the components of
the particular vector $\mathbf{h}_m$
(for $m=\left\lfloor \frac{j}{M} \right\rfloor$),
costs, in turn, $\mathcal{O}(r^2 M)$ flops.
Finally, $\mathcal{O}(rM\log M+r^2 M)$.
\end{proof}
\begin{Rem}

Each component of the resulting vector can be computed by the formula
\begin{equation}
\label{EqForSFin}
s_j=h_{m_j l_j}=
\sum_{i=0}^r  \alpha_{m_ji} k_{i l_j} +  \beta_{m_ji}t_{i l_j},
\qquad
m_j = \left \lfloor \frac{j}{M} \right \rfloor,
\qquad
l_j=j \!\!\! \mod M.
\end{equation}
Here $k_{il_j}$ is the $l_j$-th component of the vector~$\mathbf{k}_i$ and
$t_{il_j}$ is the $l_j$-th component of the vector~$\mathbf{t}_i$.
\end{Rem}
\begin{Rem}
It follows from Lemma~\ref{lem4} that $\alpha_{i+1,j}=\beta_{ij}$  
in~\eqref{alphaBetDecomp}.
\end{Rem}

\subsection{Final algorithm}
\label{AlgorithmSect}
To compute $\tilde F_{1}^{(n)}(x_i)$, which defines the final solution~\eqref{un1xtFNumer}
on the mesh~\eqref{xunifmeshM},
one needs to carry out iterations~\eqref{Iksum} starting from $k=n$ down to $1$.
At each iteration step $k$ we construct a function $f_k\left(x^{(k)}_i\right)$, 
which \emph{approximates}
the entries~$s^{k}_i$ in equation~\eqref{EqForSFin} as follows.
Suppose, that the function $f_{k+1}\left(x^{(k+1)}_i\right)$ has been already  constructed 
at the previous step $k + 1$.
Then, to compute $f_k\left(x^{(k)}_i\right)$ at the current iteration $k$,
we consider\footnote{but do not compute all its elements}
the matrix~$\mathbf{\Phi}^{(k+1)}$ with the entries
\begin{equation}
\label{FmatrixCross}
\Phi^{(k+1)}_{ij}= f_{k+1}\left(y_{ij}\right) e^{-w_{k}V(y_{ij},\tau_{n-k})\delta t}, \qquad
y_{ij}=x^{(k+1)}_{i +  j \cdot M},
\end{equation}
and apply the \emph{cross approximation}~\eqref{CrossDecomp} to this matrix.
The columns of this matrix are vectors $\mathbf{h}^{(k+1)}_m$
element-wise multiplied by the corresponding exponential factor with the potential~\eqref{FmatrixCross},
see Figure~\ref{Pict3BasisMatr}.
The algorithm of the cross approximation requires only $\mathcal{O}(rM)$ entries,
which are being chosen adaptively. They are calculated by 
the function $f_{k+1}\left(y_{ij} \right)$
on-the-fly for the particular points $y_{ij}$. Thus,
\begin{equation}
\label{FBVcross}
\mathbf{\Phi}^{(k+1)}=\mathbf{B} \mathbf{V}^T, \qquad 
\mathbf{B} \in \mathbb{R}^{M \times r}, \qquad
\mathbf{V} \in \mathbb{R}^{(k+1) \times r}, \qquad
r \ll M,
\end{equation}
where $\mathbf{B}$ and $\mathbf{V}$ are matrices  of the rank~$r$ saved in memory.
By construction, the $m$-th column of matrix $\mathbf{\Phi}^{(k+1)}$ is 
the vector~$\mathbf{l}_{m}$
from~\eqref{lrDef} and the $i$-th column of matrix $\mathbf{B}$ is the basis vector
$\mathbf{u}_i$ from Lemma~\ref{lem2}. Hence,
$\mathbf{V}^{T}$ is the matrix of coefficients of the decomposition~\eqref{lmuiDecompose}.
Once the cross approximation~\eqref{FBVcross} is obtained,
the memory allocated for all data structures related 
to $f_{k+1}\left(x^{(k+1)}_i\right)$ can be overwritten at the next iteration.
\begin{figure}
\includegraphics{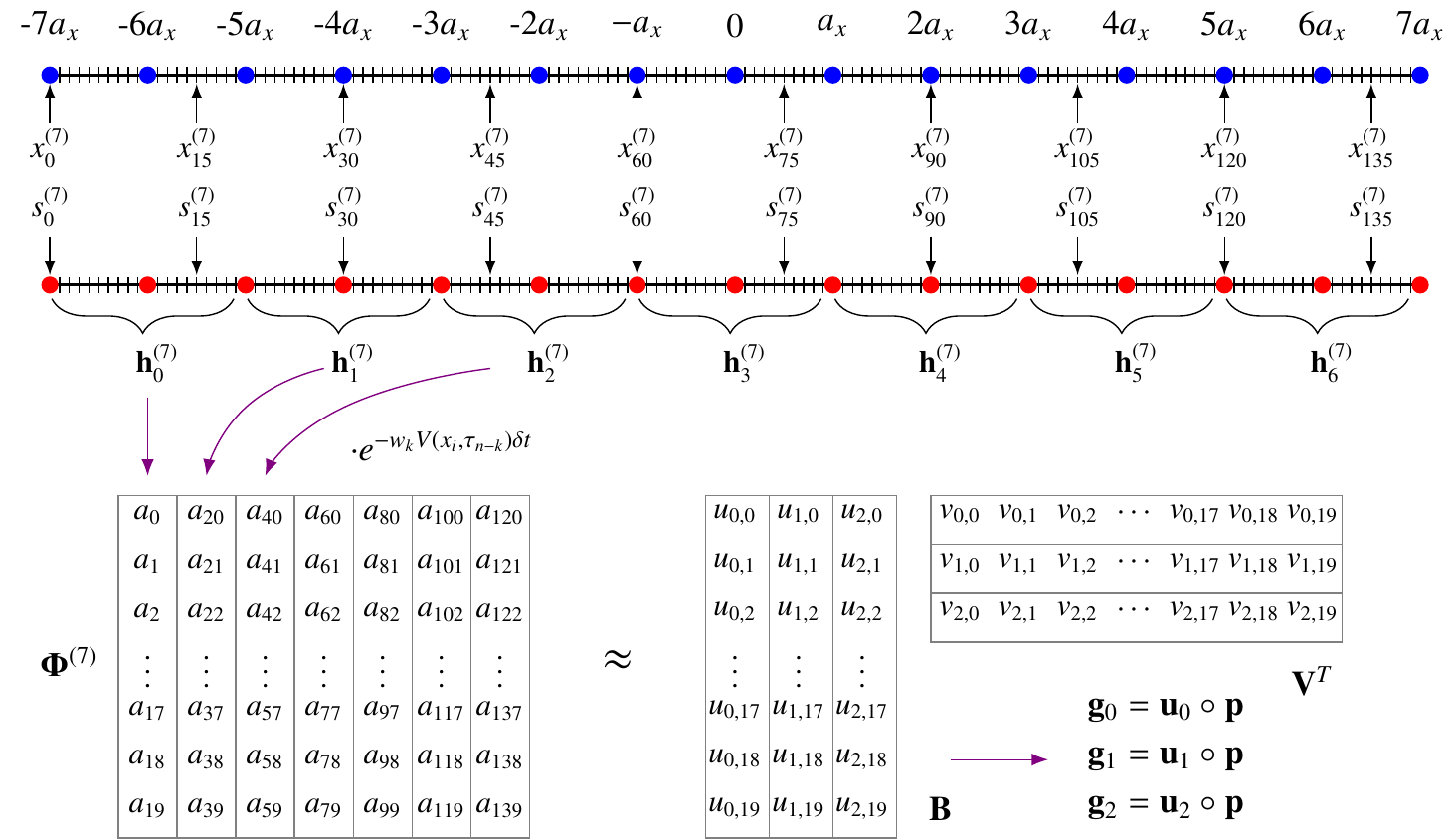}
\caption{
Construction of matrix $\mathbf{\Phi}^{(k)}$ from 
a one-dimensional convolution~\eqref{pSiDefNote} according
to algorithm in Section~\ref{AlgorithmSect}.
On a spatial homogeneous mesh~\eqref{xunifmeshM}
the corresponding entries of vector $\mathbf{s}$~\eqref{pSiDefNote}
are calculated. By definition, vector $\mathbf{s}$ is composed
from vectors~$\mathbf{h}_m$~\eqref{sHp}. 
Each column of the matrix~$\mathbf{\Phi}^{(k)}$
is composed of $\mathbf{h}_m$ multiplied by a corresponding 
factor $e^{-w_k V(x_{i},\tau_{n-k}) \delta t}$.
Then this matrix is decomposed 
by a \emph{cross approximation}
$\mathbf{\Phi}^{(k)}=\mathbf{B} \mathbf{V}^T$~\eqref{FBVcross}. 
For the approximation there needed only some elements
of matrix~$\mathbf{\Phi}^{(k)}$, which are chosen
adaptively and computed on-the-fly.
Then convolutions $\mathbf{g}_i=\mathbf{u}_i \circ \mathbf{p}$ are 
calculated via fast Fourier transform and saved
in the memory. Particular values of $s_i^{(k)}$ for the next iteration step $k-1$ can be computed by formula~\eqref{EqForSFin}  then.
\label{Pict3BasisMatr}
}
\end{figure}


Computation of the circulant matrix-vector products~\eqref{ktBas} 
is done according to Lemma~\ref{Lem4FFT} by the convolution
$\mathbf{g}_i = \mathbf{b}_{i} \circ \mathbf{\hat p}$, where
$\mathbf{b}_{i}$ is a column of the matrix $\mathbf{B}$.
The vectors $\mathbf{g}_i=(\mathbf{t}_i,\mathbf{k}_i)^{T}$ are also saved in the memory.
Then $f_k\left(x^{(k)}_i\right)$ is calculated by equation~\eqref{EqForSFin},
and the algorithm proceeds to the next iteration.

At some iteration step~$k$ the rank of the decomposition~\eqref{FBVcross}
will reach the number of columns 
and from this iteration it will be more efficient
to carry out the convolution~\eqref{pSiDefNote}
without low-rank approximation.
Complexity of one iteration of the presented algorithm is estimated in Theorem~\ref{Theorem2}.
Finally, for all $n$ steps it is $\mathcal{O}(nrM\log M +nr^2M)$ flops, $r \ll n$.
The standard FFT based algorithm applied to the whole array without
any low-rank compression at each step gives complexity for all $n$ steps equal to
$\mathcal{O}(n^2M\log M)$ flops. We illustrate this theoretical estimations
by the example from Section~\ref{CauchySubsect} in~Figure~\ref{FFT42Cauchy}.
\begin{figure}
\includegraphics[width = 10cm]{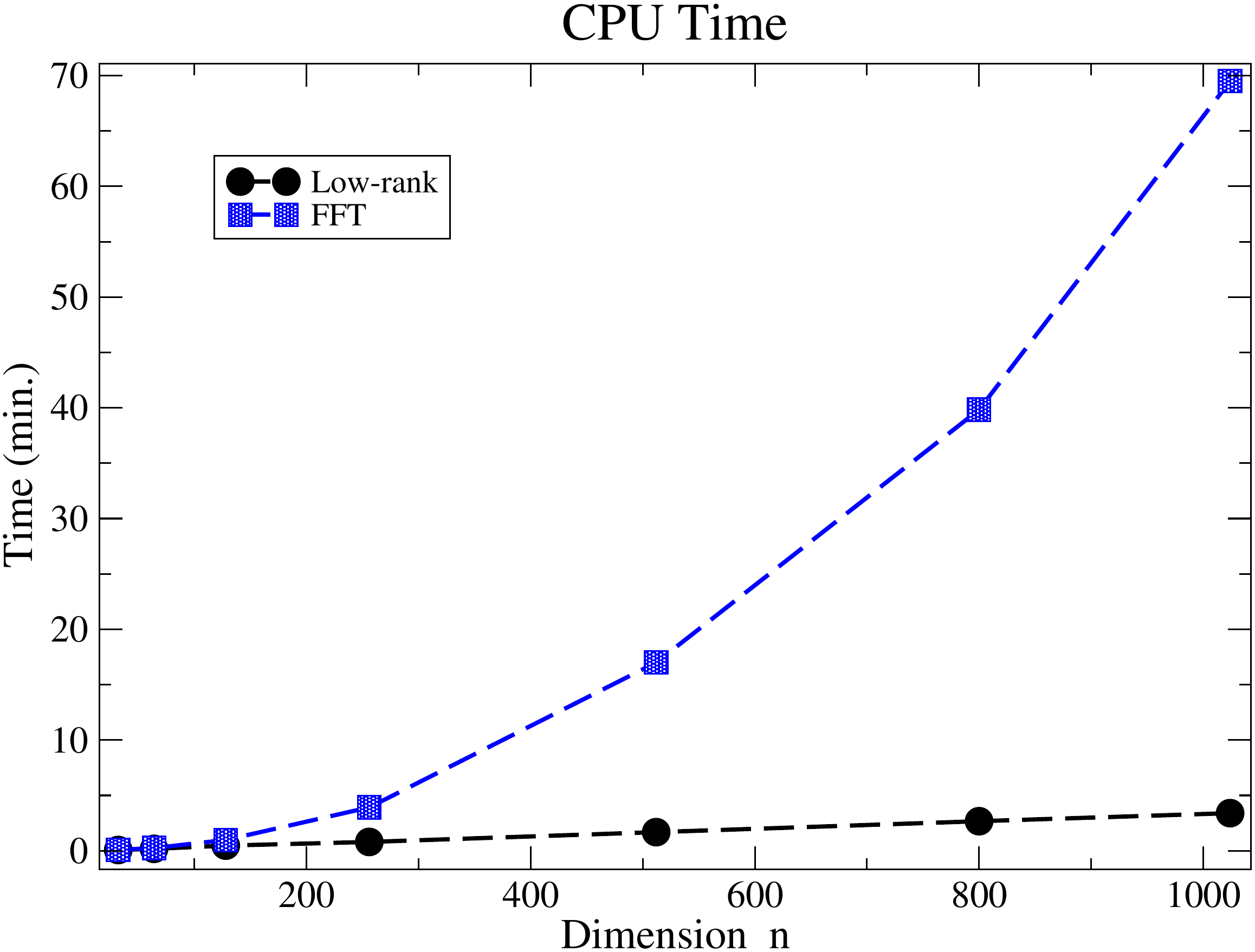}
\caption{
A numerical illustration of theoretical estimations for
the example from Section~\ref{CauchySubsect}.
For a standard FFT based algorithm applied to the whole array 
the $\mathcal{O}(n^2M\log M)$ flops complexity is labeled by square points.
The low-rank complexity $\mathcal{O}(nrM\log M +nr^2M)$ flops is labeled by
circles. The time is scaled in minutes, $n$ is the number of dimensions (iteration steps),
$M=8000$, $r=10$.
\label{FFT42Cauchy}
}
\end{figure}

Basically, the asymptotic complexity, proven in Theorem~\ref{Theorem2}
is practically useful for~$r \ll n$.  This is the main assumption for the 
matrix from~\eqref{FmatrixCross}.
Existence of such an approximation (and the properties of the initial problem) 
is in general still an open question.
Some particular cases were studied in~\cite{demanet-coherent-2013}.
It was shown, that the cross approximation converges for matrices having 
singular vectors satisfying the \emph{coherence} property.
Some estimations can be found in~\cite{bebe-bem-2006, bebe-cheb-2009, bebe-acamf-2011} also.
There is a theoretical idea how to identify the existence of 
the low-rank structure of a given matrix generated by a one-dimensional
target function \emph{a priory} (see~\cite{lubich-integr-2006} 
and \ref{LowRankStrSect} for details).

\section{Numerical experiments and discussions}
\label{NumCalcs}
\subsection{Harmonic Oscillator}
\label{HarmOsc}
As a first example, let us consider a model system, which can be solved analytically,
with the initial condition $f_{ho}(x)$ and the dissipation rate $V_{ho}(x,t)$ defined as
\begin{equation}
\label{OscfV}
f_{ho}(x)=p(\beta,x)=\sqrt{ \frac{\beta}{\pi}} e^{-\beta x^2},
\qquad
V_{ho}(x,t)=\frac{x^2}{t+1}.
\end{equation}
According to equation~\eqref{un1xtFNumer}
the exact solution $u^{(n)}_{ho}(x,t)$ for the particular case~\eqref{OscfV} has
the following form (see~\ref{App:Osc} for derivation)
\begin{equation}
\label{OscExact}
u^{(n)}_{ho}(x,t)=\Psi^{(n)}_{1}(x) \, e^{-w_0 V(x, t) \, \delta t}.
\end{equation}
Comparison of the numerical low-rank solution 
with the exact one~\eqref{OscExact} gives the relative error 
\begin{equation}
\label{RelEpsDef}
\epsilon=
\left \|  \mathbf{\tilde u} - \mathbf{u} \right \| \Big/
\left \|  \mathbf{u} \right \|,
\end{equation}
which in the order of magnitude is equal to the machine precision,
where $\mathbf{\tilde u} $ is an approximate solution on the
final mesh and $\mathbf{u}$  is the exact one on the same mesh.
For our example
\begin{equation*}
\qquad
\tilde u_i =\tilde F^{(n)}_{1}(x_i) \, e^{-w_0 V_{ho}(x_i, T) \, \delta t}, 
\qquad
u_i = \Psi^{(n)}_{1}(x_i) \, e^{-w_0 V_{ho}(x_i, T) \, \delta t}.
\end{equation*}
Here $\sigma=0.25$, $T = 10$, $n = 100$, and the mesh is 
a uniform one on $[-2, 2]$ with $M = 2N_x=8000$ points.
It is interesting that the scheme is exact for this case.

\subsection{Cauchy Distribution}
\label{CauchySubsect}
The second example is taken from~\cite{gerstner-sparsegrid-1998} and 
is interesting because it can be solved analytically as well.
For $V_{c}(x,t)$ and initial condition $f_{c}(x)$ such that
\begin{equation}
\label{CauchyEq}
V_{c}(x,t)=-\frac{1}{t+1} +2 \sigma \frac{3x^2-1}{(x^2+1)^2},
\qquad
f_{c}(x)=\frac{1}{\pi}\frac{1}{x^2+1},
\end{equation}
the exact solution is
\begin{equation*}
u_{c}(x,t)=\frac{1}{\pi}\frac{t+1}{x^2+1}.
\end{equation*}
In Table~\ref{Table1Cauchy} we present  numerical results 
demonstrating the numerical order of scheme by the Runge formula
\begin{equation*}
p=\log_{2}\frac{\left \| \mathbf{u}_{n}-\mathbf{u}_{n/2}\right\|}
{\left \| \mathbf{u}_{n/2} - \mathbf{u}_{n/4}\right\|},
\end{equation*}
with respect to~$\delta t$
and the timings for the whole computation. Here $\mathbf{u}_n$ is the 
computed solution at the final step in time.

Using our approach, it becomes possible to calculate $u^{(n)}(x,t)$
for large values of final time~$T$ due to the \emph{low-rank} approximation
of matrices~$\mathbf{\Phi}^{(k)}$ composed from
the columns of the integrand values (see Section~\ref{AlgorithmSect}).
That significantly reduces the computational cost.
For an example, for the last row of Table~\ref{Table1Cauchy}
iterations start from the calculation of the convolution on the range $[-16386, 16386)$ with
$32\,772\,000$ mesh points. This is reduced to the calculation
of $10$ (the rank) convolutions of two arrays with $8000$ elements.

As it can be seen from our results, the scheme has the second order in time.
It can be improved to higher orders by Richardson extrapolation 
on fine meshes~\cite{brezinski-extrapolation-1991,stoer-extrapolation-2002}.
Another way is to use other path integral formulations with high-order
 propagators~\cite{makri-pathsplit-1995, makri-quantdyn-2014}.

%
%
\begin{table}
\caption{Convergence rate for system~\eqref{CauchyEq}.
Accuracy of the cross approximation $\varepsilon = 10^{-10}$.
Direct convolutions start from $n=20$, $\sigma = 0.5$, range of 
final spatial domain is $[-2,2)$, $N_x = 4000$.
Dimension of the integral~\eqref{discr_uv} is labeled by~$n$,
$\delta t$ is a time step, $T$ is a final time for solution $u(x,T)$,
$\epsilon$~is an error estimated by the Richardson extrapolation, 
and $p$ is the order of the scheme for $\delta t$.
Ranks of the matrix $\mathbf{\Phi}^{(k)}$ from~\eqref{FBVcross} are
presented in column labeled by~$r$.
The CPU time for computation of the integral~\eqref{discr_uv} in \emph{all points} of
the mesh is reported in the last column.
\label{Table1Cauchy}
}
\centering
\begin{tabular}{crclccc}
\hline
\hline
$T$ & $n$ & $\delta t$  & $p$ & $\epsilon$ & $r$ & \textit{CPU Time} (min.)  \\
\hline
$1.0$ &     $32$ & $3.1 \cdot 10^{-2}$ & $-$         & $2.8 \cdot 10^{-4}$ & $10$ & $0.1$ \\
          &     $64$ & $1.6 \cdot 10^{-2}$ & $-$         & $7.0 \cdot 10^{-5}$ & $10$ & $0.2$ \\
          &   $128$ & $7.8 \cdot 10^{-3}$ & $1.997$ & $1.8 \cdot 10^{-5}$ & $10$ & $0.4$\\
          &   $256$ & $3.9 \cdot 10^{-3}$ & $1.999$ & $4.4 \cdot 10^{-6}$ & $10$ & $0.9$ \\
          &   $512$ & $2.0 \cdot 10^{-3}$ & $2.0$     & $1.1 \cdot 10^{-6}$ & $10$ & $1.8$ \\
          & $1024$ & $9.8 \cdot 10^{-4}$ & $2.0$     & $2.8 \cdot 10^{-7}$ & $10$ & $3.8$ \\
\hline
$20.0$&    $32$ & $6.3 \cdot 10^{-1}$ & $-$         & $4.1 \cdot 10^{-1}$ & $10$ & $0.1$ \\ 
          &     $64$ & $3.1 \cdot 10^{-1}$ & $-$         & $1.6 \cdot 10^{-1}$ & $10$ & $0.2$ \\
          &   $128$ & $1.6 \cdot 10^{-1}$ & $1.10$   & $4.8 \cdot 10^{-2}$ & $10$ & $0.4$ \\
          &   $256$ & $7.8 \cdot 10^{-2}$ & $1.68$   & $1.2 \cdot 10^{-2}$ & $10$ & $0.9$ \\
          &   $512$ & $3.9 \cdot 10^{-2}$ & $1.93$   & $3.1 \cdot 10^{-3}$ & $10$ & $1.9$ \\
          & $1024$ & $2.0 \cdot 10^{-2}$ & $1.98$   & $7.9 \cdot 10^{-4}$ & $10$ & $4.0$ \\
          & $2048$ & $9.8 \cdot 10^{-3}$ & $1.995$ & $2.0 \cdot 10^{-4}$ & $10$ & $8.0$ \\
          & $4096$ & $4.9 \cdot 10^{-3}$ & $1.999$ & $4.9 \cdot 10^{-5}$ & $10$ & $16.8$ \\
          & $8192$ & $2.4 \cdot 10^{-3}$ & $2.0$     & $1.2 \cdot 10^{-5}$ & $10$ & $37.5$ \\
\hline
\hline
\end{tabular}
\end{table}

%
\begin{figure}
\includegraphics[width=10cm]{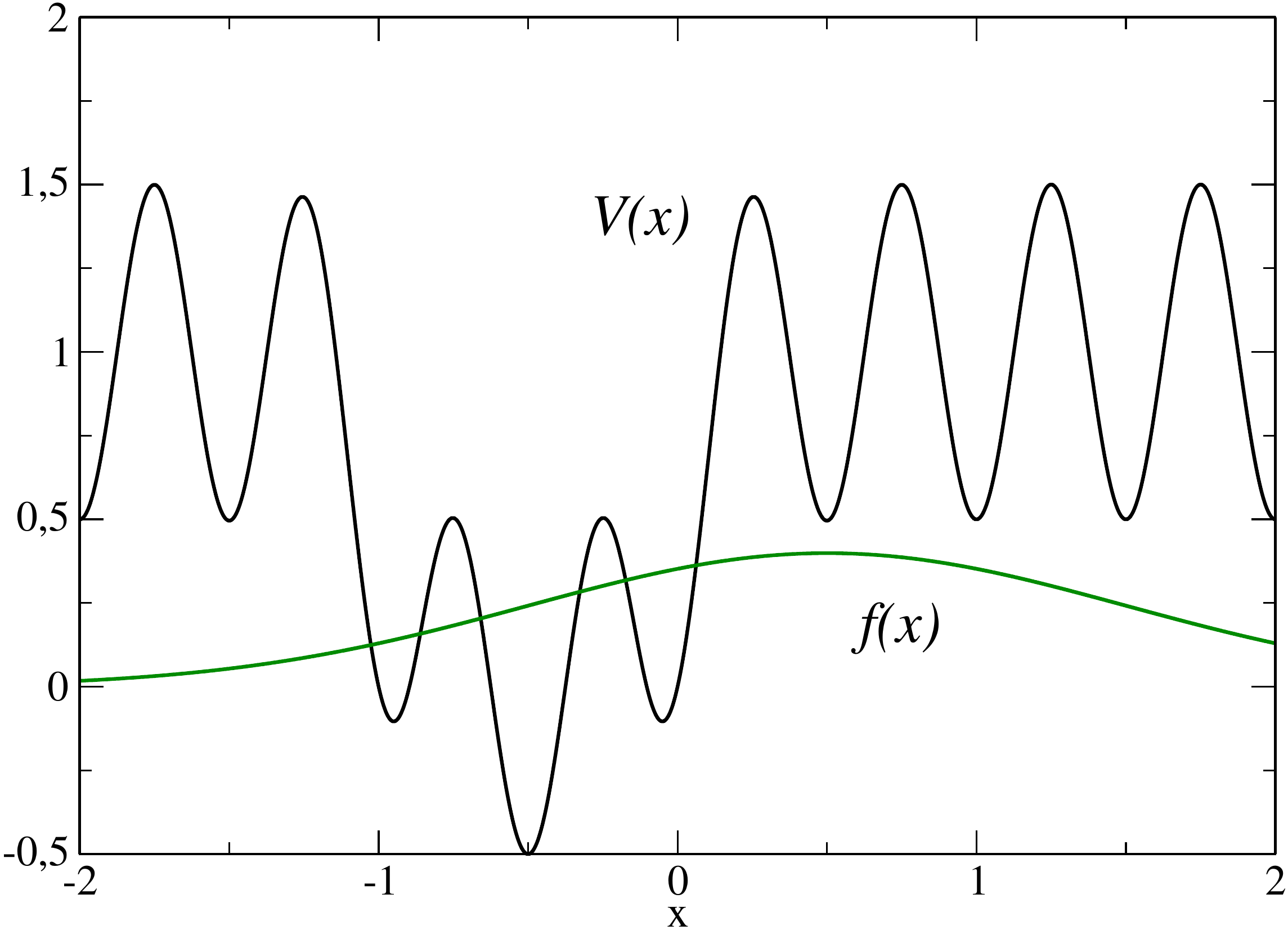}
\caption{
\label{Vfpict}
Potential $V(x)$ and initial distribution~$f(x)$ 
for periodic system with impurity~\eqref{Vf_imp}.
Potential oscillates on a free space. Functions $V(x)$ and $f(x)$
are relatively shifted to break the symmetry.
}
\end{figure}

\begin{figure}
\includegraphics[width=10cm]{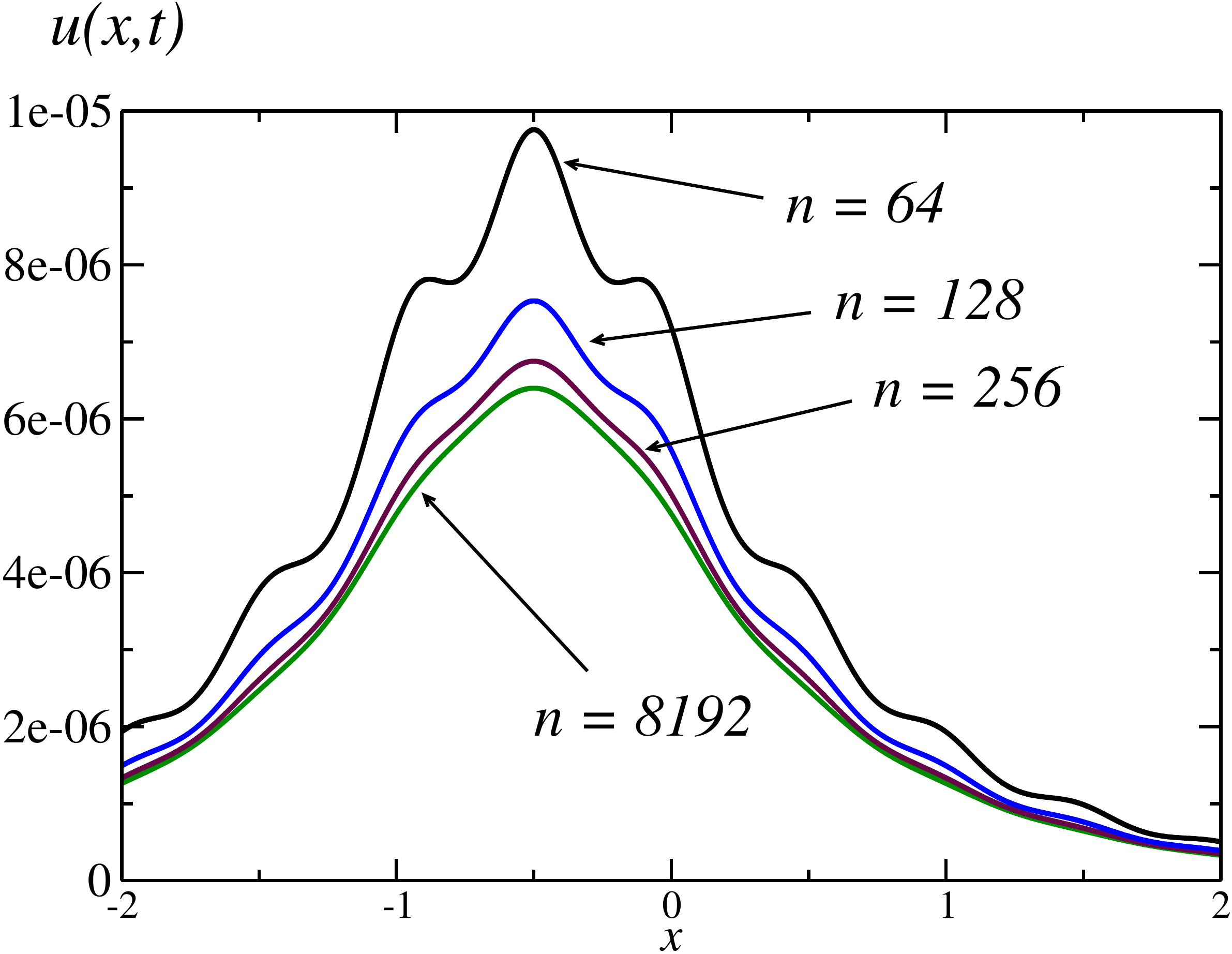}
\caption{
\label{Fig2Ux}
Convergence of solution $u(x,t)$ for nonperiodic potential with impurity~\eqref{Vf_imp}
for different~$n$. This results correspond to the data presented in Table~\ref{Table2Periodic}.
The number of spacial mesh points 
$M=2N_x =8000$ in the final range~$[-2,2)$.
The dissipation rate~\eqref{Vf_imp} leads to a decrease in
the norm of the distribution density. As seen in the picture,
the solution is far from the correct one for 
the dimensions $n = 64,128, 256$.
}
\end{figure}

\begin{figure}
\includegraphics[width=10cm]{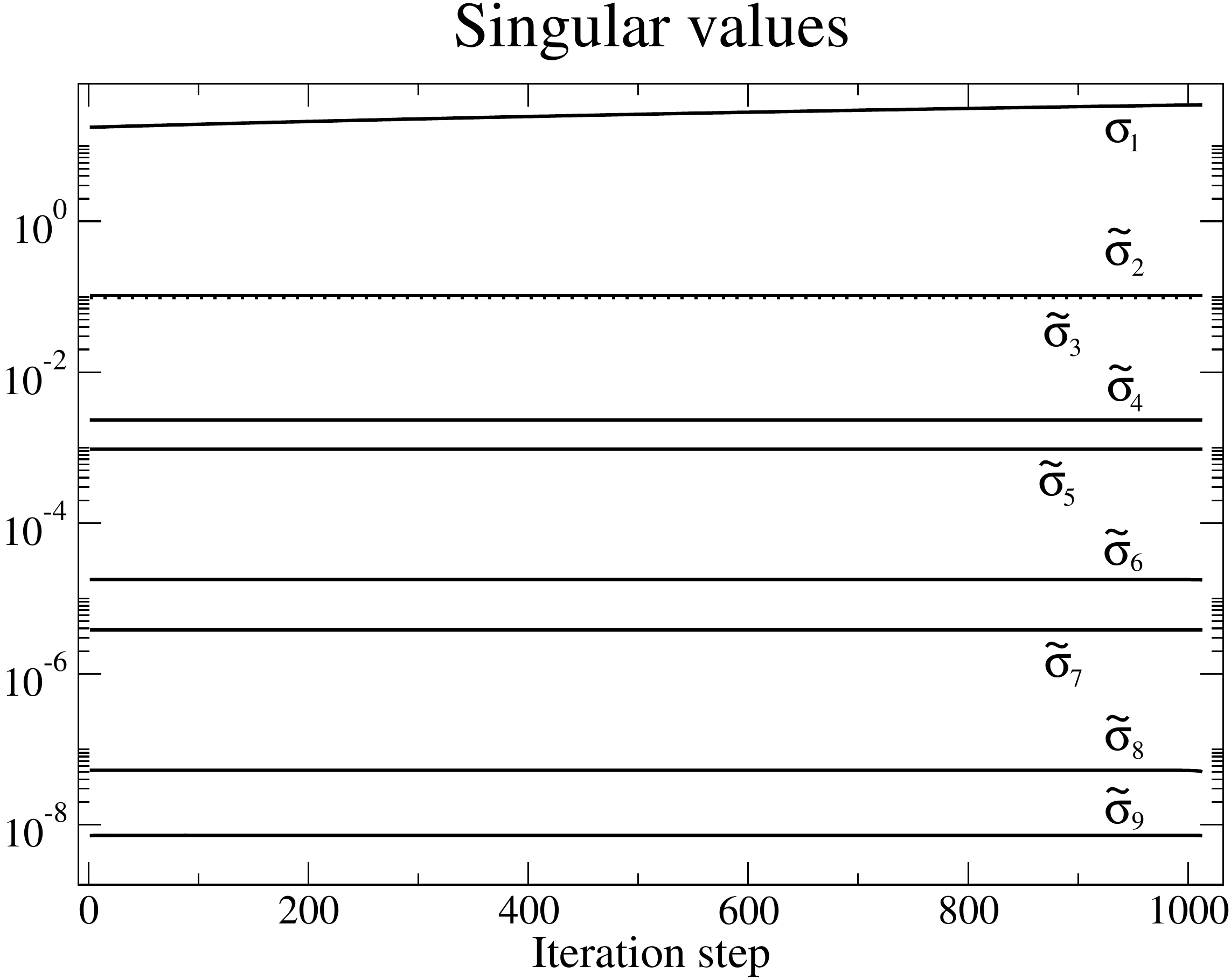}
\caption{
\label{Singulars}
The first few singular values (s.v.) of the matrix~\eqref{FBVcross} for system~\eqref{CauchyEq}
at each iteration step. 
The first s.v.~$\sigma_1$ is presented in the absolute value.
The other ones are given in the relative values as~$\sigma_i/\sigma_1$.
The values below the cross accuracy~$\varepsilon=10^{-10}$ are thrown out.
As it can be seen, approximate SVD-rank is similar to the cross 
rank (in the sense of criterion~\eqref{rankCrossCriterion}).
}
\end{figure}

\subsection{Nonperiodic potential with impurity}
\label{ImpurPeriodicExmpl}
The dissipation rate $V(x,t)$ causes the creation and annihilation
of diffusing particles, as it follows from the main equation~\eqref{UxtDiffEqFull}. 
Without the Laplacian, which is responsible for the free diffusion, we have
\begin{equation*}
 \frac{\partial}{\partial t} u(x,t) = - V(x,t) u(x,t).
\end{equation*}
It can be seen, that the density of particles increases 
over time for $V(x,t) < 0$ and 
decreases for $V(x,t) > 0$ correspondingly.
The case $V(x,t) < 0$ may lead to 
an instability in the solution, because the integral
\begin{equation}
\label{IntVconvg}
\int_{-\infty}^{\infty} f(x+\xi) e^{-w_iV(x+\xi,\tau_{n-i})\delta t }e^{-\lambda \xi^2}d\xi,
\end{equation}
may diverge (see Eq.~\eqref{discr_uv}).
Therefore, when choosing~$V(x,t)<0$, one should make sure
 that the integral in~\eqref{IntVconvg} converges.

Consider the following problem (see Figure~\ref{Vfpict})
\begin{equation}
\label{Vf_imp}
V_{i}(x)=a + \sin^2 \left( \pi\left( \frac{x}{a} + 1\right) \right)
 -\frac{1}{1 + \left(\frac{x}{a} + 1 \right)^8},
\qquad
f_{i}(x) = \sqrt{ \frac{\beta}{\pi}} e^{-\beta (x-a)^2},
\qquad a = 0.5, \quad \beta = 0.5.
\end{equation}
It can be interpreted as a nonperiodic system with an impurity.
The term $V(x)$ does not decay in the spatial domain
and it is not periodic. Therefore the reduction of this problem to a bounded domain is not a trivial 
task and would require sophisticated artificial boundary conditions.

In~Table~\ref{Table2Periodic} we present results of numerical calculations,
which show the order of the numerical scheme.
In~Figure~\ref{Fig2Ux} we also present the computed solutions for different values of $n$.
Even in this case, the solution converges with the order $p=2$.
We also used the Richardson extrapolation of $u(x, T)$ for different $n$ 
to get higher order schemes in time.

%
%
\begin{table}
\caption{Convergence rate for system~\eqref{Vf_imp}.
Accuracy of the cross approximation $\varepsilon = 10^{-12}$.
Direct convolutions start from $n=20$, $\sigma = 0.25$, final domain is $[-2,2)$, $N_x = 8000$.
Dimension of the integral~\eqref{discr_uv} is labeled by~$n$,
$\delta t$ is the time step, $T=20$ is the final time.
The order of the scheme $p_2$ for $\delta t$ and the relative error
$\epsilon_2$~\eqref{RelEpsDef} are estimated from the original data computed 
by the algorithm from Section~\ref{AlgorithmSect}.
The next values $p_4$ and $\epsilon_4$ are estimated by the Richardson extrapolation.
As it can be seen, the scheme has the fourth order in time after the extrapolation.
The ranks of the matrix $\mathbf{\Phi}^{(k)}$ from\eqref{FBVcross} are given in the column labeled by~$r$.
The CPU time for computation of the integral~\eqref{discr_uv} in \emph{all points} of
the mesh is reported in the last column.
\label{Table2Periodic}
}
\centering
\begin{tabular}{rclclccc}
\hline
\hline
$n$ & $\delta t$  & $p_2$ & $\epsilon_2$  & $p_4$ & $\epsilon_4$ & $r$ & \textit{CPU Time} (min.)  \\
\hline
    $64$ & $3.1 \cdot 10^{-1}$ & $-$         & $-$                          & $-$       & $-$ & $9$ & $0.2$ \\
  $128$ & $1.6 \cdot 10^{-1}$ & $-$         & $8.3 \cdot 10^{-2}$ & $-$       & $-$ & $9$ & $0.3$ \\
  $256$ & $7.8 \cdot 10^{-2}$ & $1.47$   & $3.3 \cdot 10^{-2}$ & $-$       & $2.8 \cdot 10^{-3}$ & $9$ & $0.8$ \\
  $512$ & $3.9 \cdot 10^{-2}$ & $1.62$   & $1.1 \cdot 10^{-2}$ & $2.00$ & $7.0 \cdot 10^{-4}$ & $9$ & $1.7$ \\
 $1024$ & $2.0 \cdot 10^{-2}$ & $1.84$   & $3.1 \cdot 10^{-3}$ & $3.04$ & $8.6 \cdot 10^{-5}$ & $9$ & $3.6$ \\
 $2048$ & $9.8 \cdot 10^{-3}$ & $1.95$   & $8.1 \cdot 10^{-4}$ & $3.66$ & $6.8 \cdot 10^{-6}$ & $9$ & $7.0$ \\
 $4096$ & $4.9 \cdot 10^{-3}$ & $1.988$ & $2.0 \cdot 10^{-4}$ & $3.85$ & $4.7 \cdot 10^{-7}$ & $9$ & $14.7$ \\
 $8192$ & $2.4 \cdot 10^{-3}$ & $1.997$ & $5.1 \cdot 10^{-5}$ & $3.98$ & $3.0 \cdot 10^{-8}$ & $9$ & $33.0$ \\
\hline
\hline
\end{tabular}
\end{table}

\subsection{Monte Carlo experiments}
\label{MCsect}
In this section we present results of Monte Carlo simulation.
To estimate the solution in \emph{a fixed point} $x_0$ the
following formula is used
\begin{equation}
\label{uxt_MC}
u^{(n)}_{{ MC}}(x_0,T)=\frac{1}{K}\sum_{k=1}^{K}
f\left(\xi_{\langle k \rangle}(n)\right)
\! \prod_{i=0}^{n}
e^{ -w_{i} V (\xi_{\langle k \rangle}(i),  \tau_{n-i}) \delta t },
\end{equation}
\begin{equation*}
\xi_{\langle k \rangle}(i)=\xi_{\langle k \rangle 1} + \ldots + \xi_{\langle k \rangle i},
\end{equation*}
where each component of the vector 
$\mathbf{\xi}_{\langle k \rangle}=(\xi_{\langle k \rangle 1}, \ldots,
\xi_{\langle k \rangle n} )^{T}$ is 
independently taken from the normal distribution $\mathcal{N}(0,2\sigma\delta t)$
at each trial step $k: \, 1\le k \le K$, where $K$ being the number of trials.

Results for the exactly solvable model~\eqref{CauchyEq} are presented in
Table~\ref{Table3MC}.
We compare accuracy and timings for Monte Carlo and low-rank calculations.
It should be emphasized that in the Monte Carlo approach only one point 
of $u(x_0, T)$ is calculated for a fixed $x_0$ in one simulation,
while our approach allows to compute the whole array $u(x_i,T)$
on the whole mesh simultaneously. This numerical experiments have been done 
on a single CPU core without parallelization of the Monte Carlo algorithm
just to estimate the speedup of the low-rank computation.
More advanced realization such as quasi Monte Carlo methods can be used.
As it can be seen, the low-rank algorithm presented in Section~\ref{AlgorithmSect}
is much faster.
%
\begin{table}
\caption{
Timings for system~\eqref{CauchyEq}.
Accuracy of the cross approximation $\varepsilon = 10^{-10}$.
Direct convolutions start from $n=30$, $\sigma = 0.5$, range of 
final spatial domain is $[-2,2)$, $N_x = 4000$.
Dimension of the integral~\eqref{discr_uv} is labeled by~$n$,
$\delta t$ is a time step, $T$ is a final time for solution $u(x_0,T)$
 computed in a fixed point $x_0$. Here $x_0 = 0$, $T = 1$.
The relative error~$\epsilon=|\tilde u(x_0,T) - u(x_0, T) | / |u(x_0, T)|$ is 
computed in one point $x_0$.
Time for one point calculation is presented for Monte Carlo approach~\eqref{uxt_MC}
and is estimated for the whole mesh array consisting of $M=2N_x=8000$~points (the last column).
For the low-rank computation the total timings are presented as well.
Monte Carlo simulation has been done with $K=10^9$ samples.
The low-rank results are labeled by LR, while the Monte Carlo results are labeled by MC.
\label{Table3MC}
}
\centering
\begin{tabular}{rclccr}
\hline
\hline
 $n$ & $\delta t$ & $u(x_0, T)$ & $\epsilon$ & \textit{CPU Time} \textit{($1$ point)} & \textit{CPU Time (total)} \\
\hline
    $32$ & $3.1 \cdot 10^{-2}$ & $0.6369899_{MC}$ & $5.8 \cdot 10^{-4}$ & $40.2$ min & $5.3 \cdot 10^3$ hrs (est.)\\
             &                                & $0.6369792_{LR}$  & $5.6 \cdot 10^{-4}$ &                    & $6$  sec \phantom{(est.)} \\
    $64$ & $1.6 \cdot 10^{-2}$ & $0.6367165_{MC}$ & $1.5 \cdot 10^{-4}$ & $79.1$ min & $1.0 \cdot 10^4$ hrs (est.)\\
             &                                & $0.6367099_{LR}$  & $1.4 \cdot 10^{-4}$ &                    & $13$ sec \phantom{(est.)}\\
  $128$ & $7.8 \cdot 10^{-3}$ & $0.6366653_{MC}$ & $7.2 \cdot 10^{-5}$ & $171$ min  & $2.2 \cdot 10^4$ hrs (est.)\\
             &                                & $0.6366423_{LR}$  & $3.5 \cdot 10^{-5}$ &                    & $26$ sec \phantom{(est.)}\\
  $256$ & $3.9 \cdot 10^{-3}$ & $0.6366388_{MC}$ & $3.0 \cdot 10^{-5}$ & $355$ min  & $4.7 \cdot 10^4$ hrs (est.)\\
             &                                & $0.6366254_{LR}$  & $8.9 \cdot 10^{-6}$ &                    & $53$ sec \phantom{(est.)}\\
  $512$ & $2.0 \cdot 10^{-3}$ & $0.6366218_{MC}$ & $3.2 \cdot 10^{-6}$ & $705$ min  & $9.4 \cdot 10^4$ hrs (est.)\\
             &                                & $0.6366212_{LR}$  & $2.2 \cdot 10^{-6}$ &                    & $1.8$ min \phantom{(est.)} \\
 &                                & $0.6366198_{exact}$      &                                &                                &  \\
\hline
\hline
\end{tabular}
\end{table}

\section{Conclusion and future work}

The presented results show that the proposed method is an efficient approach for solving
diffusion equations in a free space without artificial boundary conditions (ABC).
Instead of standard solvers based on the ABC designed for certain 
cases~\cite{dubach-bc-1996, xz-bc-2004}, 
our method is more universal one and
is applicable to a wide class of potentials as a unified approach.
It needs a constant memory size, which depends only on the final mesh size~$M$
and the rank~$r$ of the matrix of solution from~\eqref{FBVcross}
at each iteration step. Its complexity, then, is similar to the classical time-stepping 
schemes for the solution of the reaction-diffusion
equations in a bounded domain. It also shows a favourable scaling. 

It is natural to extend the approach presented in the current work to higher dimensions.
Then, instead of one-dimensional convolutions we will have to work with $d$-dimensional 
convolutions, where $d$ is the dimension of the problem. 
The extended domain will be $[-na, na]^d$, where $n$ is the number of time steps
(equal to the dimension of the path integral). 
Thus, for higher dimensions the solution can be treated as a $(d+1)$-dimensional tensor 
of size $M \times n \times \ldots \times n$.
Instead of the matrix low-rank approximation, stable low-rank factorization based on 
the tensor train decomposition~\cite{osel-tt-2011} could
be used, with the final cost approximately equal to the cost of 
the computation the convolutions on the small domain.

Finally, the most intriguing part of the work to be done
 is to apply the similar techniques to the Schr\"odinger equation.
There, the convolution is no longer a convolution with a 
Gaussian function. Thus, the problem is much more difficult
and our approach requires modifications.
The presented method can also be applied to path integrals arising in other application areas, 
including the financial mathematics. The main requirement is that 
the integrand depends on the sum of variables multiplied by a separable function.

\section*{Acknowledgements}
This work was partially supported
by Russian Science Foundation grant  14-11-00659.

\appendix

\section{The cross approximation of matrices}
\label{CrossApp}
Let $\mathbf{A} \in \mathbb{R}^{n \times m}$ and 
$\hat I=\{i_1,i_2\ldots, i_r \}$, $\hat J= \{ j_1,j_2\ldots j_r \}$, 
be subsets of $I=\{1,\ldots, n\}$ and $J=\{1,\ldots, m\}$, respectively, $r \le \min(n,m)$.
By $\hat {\mathbf{A}}=\mathbf{A}(\hat I, \hat J)$ we denote a submatrix of $\mathbf{A}$ formed by
the entries of $\mathbf{A}$ at the intersections of rows $i \in \hat I$
and columns $j \in \hat J$. In this paper we use the following concept of
\emph{the skeleton decomposition}~\cite{tee-mosaic-1996,
gtz-psa-1997, gt-psa-1995, gtz-maxvol-1997}. 
For any matrix $\mathbf{A} \in \mathbb{R}^{n \times m}$
of rank~$r$ there exist its decomposition
\begin{equation}
\label{skdec}
\mathbf{A}=\mathbf{B} \hat{\mathbf{ A}}^{-1} \mathbf{C}^{T},
\end{equation}
where
$\mathbf{B}=\mathbf{A}(I,\hat J)$,  $\mathbf{C}^{T}=\mathbf{A}(\hat I, J)$, and
$\hat{\mathbf{A}}=\mathbf{A}(\hat I, \hat J) \in \mathbb{R}^{r \times r}$ is a certain
submatrix of $\mathbf{A}$, such that $det \hat{\mathbf{A}} \ne 0$.
For the numerical reasons it is more effictive to work with orthogonal matrices.
The decomposition~\eqref{skdec} can be rewritten by the factorization of
the matrices $\mathbf{B}=\mathbf{Q}_B\mathbf{R}_B$ 
and $\mathbf{C}^T=\mathbf{R}_\mathbf{C}^T \mathbf{Q}^T_C$ 
by the \emph{QR-decomposition}, and by
further factorization of the 
rank-$r$ square matrix $\mathbf{R}_B \hat{\mathbf{ A}}^{-1} \mathbf{R}_C^T= 
\mathbf{U}_A \mathbf{\Sigma}_A \mathbf{V}_{A}^T$ by 
the \emph{singular value decomposition} 
(SVD)~\cite{golub-matrix-2012, demmel-linalg-1997}.
Thus, we will use the dyadic representation of~\eqref{skdec}
\begin{equation}
\label{CrossDecomp}
\mathbf{ A} = \mathbf{XY}^T, \qquad
A_{ij} = \sum_{q = 1}^{r} X_{iq} Y_{jq}, \qquad
\mathbf{X}=\mathbf{Q}_B \mathbf{U}_A \mathbf{\Sigma}^{1/2}_{A},
\qquad
\mathbf{Y}^{T}= \mathbf{\Sigma}^{1/2}_A \mathbf{V}_A^{T}\mathbf{Q}^T_C.
\end{equation}
If the rank of the matrix $\mathbf{A}$ is greater then $r$, 
in practice instead of exact equation~\eqref{skdec}
we consider approximation in some norm.
To obtain the decomposition~\eqref{CrossDecomp} in this case
we use the \emph{cross approximation} algorithm~\cite{tee-cross-2000,bebe-2000} 
based on the concept of the maximum 
volume submatrix (\textit{maxvol}) introduced in~\cite{gt-maxvol-2001,gostz-maxvol-2008}.
We have implemented our version of the 
algorithm available at~\cite{lits-deposit-2014, litsarev-cpc-2014}. 
Example of the usage of our code can be found in~\cite{litsarev-nlaa-2015}.
The new version of the code for complex and real matrices
will be aviable soon at~\cite{lits-dzcross-2015}.

The rank in the cross approximation technique is determined adaptively.
The algorithm starts from the guess rank~$r_0$ and at each iteration step~$k$
the subspace of vectors of~$\mathbf{B}$ and~$\mathbf{C}^{T}$
is doubled (they are chosen by the \textit{maxvol} subroutine,
which returns a set of $2r_k$ row (column) indeces of a submatrix of (almost) maximum volume). 
The next value of the rank~$r_{k+1}$, $r_{k+1}\le 2r_k$ is chosen
from the singular values of the matrix~$\mathbf{\Sigma}_{A}$ of size $2r_k \times 2r_k$
according to the following  criterion
\begin{equation}
\label{rankCrossCriterion}
r_{k+1}=\min_{1\le s \le 2r_k} \left\{s \, | \, \zeta(s) <  \varepsilon_{c} \right\},
\qquad
\zeta(s)=
\sqrt{ \frac{  \sum_{i=s+1}^{2r_k}\sigma^2_{i}  }
{\sum_{i=1}^{2r_k}\sigma^2_{i} } },
\qquad \zeta(2r_k)\equiv 0.
\end{equation}
The algorithm stops when
$|| \mathbf{\Sigma}_A^{(k)} - \mathbf{\Sigma}_A^{(k+1)} ||_2 < \varepsilon_{c} \mathbf{\Sigma}_A^{(k+1)}$
for the relative accuracy~$\varepsilon_c$.

Approximation~\eqref{CrossDecomp} can be obtained by the SVD decomposition
of the whole matrix~$\mathbf{A}$ with~$\mathcal{O}((n^2+m^2)m)$ complexity,
which is prohibitively slow.
In contrast, the rank-$r$ cross approximation requires only $\mathcal{O}((n+m)r)$ 
evaluations of the elements and $\mathcal{O}((n+m)r^2)$ additional operations.
This becomes crucial in practice, when the matrix element $A_{ij}$ is 
a time-consuming function to be calculated in a point $(i,j)$ for a finite time
or the given matrix is very large.
Existence of such an approximation and convergence of the cross algorithm
are discussed in Section~\ref{AlgorithmSect} and \ref{LowRankStrSect}.

\section{Numerical investigation of the low-rank structure of the solution basis set}
\label{LowRankStrSect}

Suppose, a function~$t(x) \in L_2(\mathbb{R})$ can be expanded into a series
\begin{equation}
\label{tcfexpan}
t(x)=\sum_{l=0}^{\infty}c_l \phi_l(x),
\qquad
c_l=\int_{-\infty}^{\infty} \! t(x) \, \phi_l(x)dx,
\end{equation}
where
\begin{equation}
\label{HermBasis}
\phi_l(x)= \small{\left( \frac{1}{2^l l! \sqrt{\pi} }   \right)^{\frac{1}{2}} } \,e^{-x^2/2} H_l(x),
\qquad H_0(x)=1,\quad H_1(x)=2x,\quad H_2(x)=4x^2 - 2,
\end{equation}
and $H_l(x),$ are Hermite polynomials,
with fast decaying coefficients~$c_l$, such that
for a given accuracy~$\varepsilon_1$
\begin{equation*}
\exists \, l_0 : \chi(l_0) < \varepsilon_1 \, \chi(0),
\qquad
\chi(l')=\sum_{l=l'+1}^{\infty}{c^2_l}.
\end{equation*}
And the approximated function
\begin{equation*}
\tilde t(x) =\sum_{l=1}^{l_0} c_l \phi_l(x),
\qquad
|| t(x) - \tilde t(x) || < \varepsilon_1
\end{equation*}
is of a canonical $\varepsilon_1$-rank~$l_0$.
Then, the question is about the rank structure of the matrix
$\mathbf{\Phi}_l$
constructed as a reshape
of a corresponding one-dimensional basis vector~$\phi_l(x_i)$ defined 
on the uniform mesh~\eqref{ximesh}
\begin{equation}
\label{HermMatr}
(\Phi_l)_{ij}=\phi_l(x_{i+j\cdot M}).
\end{equation}
If the matrices~$\{ \mathbf{\Phi}_l \}_{l=1}^{l_0}$ are the low-rank ones
then the matrix of a target function~$\tilde t(x)$
\begin{equation}
\label{Texpand}
\mathbf{T}=\sum_{l=1}^{l_0} c_l \, \mathbf{\Phi}_l, \qquad
T_{ij}=t(x_{i+j\cdot M}), 
\end{equation}
is of low-rank as well, which does not 
exceeds the upper bound $l_0\cdot r_{\max}$, where
$r_{\max}= \max_{1\le l \le l_0}( rank(\mathbf{\Phi}_l))$,
but practically, it is of order~$r_{\max}$.
In the Table~\ref{TableHermSVD}
we present first several singular values of matrix $\mathbf{\Phi}_l$.
As it can be seen, each matrix has the low-rank structure.
It would be nice to prove this numerical fact  theoretically.

Finally, to estimate the rank of the approximation~\eqref{Texpand}
one needs only to compute the coefficients~$c_l$ in the expansion~\eqref{tcfexpan}
and investigate their behaviour.
This idea is similar to the QTT approach applied to the Laplace and its
inverse operators in~\cite{gavkhor-cay-2011}.

\begin{table}
\caption{
Singular values (s.v.) of the matrix~\eqref{HermMatr} composed
from the discretized basis~\eqref{HermBasis}. The order of 
polynomials is labeled by~$l$, $\sigma_1$ is the first
(absolute) singular value, then $\sigma_i/\sigma_1$ 
corresponds to the following relative singular values.
The size of the matrix is $8000 \times 1024$.
As it can be seen (numerical) ranks does not exceed the 
value of~8 and grow from small to bigger~$l$.
\label{TableHermSVD}
}
\centering
\begin{tabular}{rccccccccc}
\hline
\hline
 $l$ & $\sigma_1$  & $\sigma_2/\sigma_1$ & $\sigma_3/\sigma_1$ & $\sigma_4/\sigma_1$ & 
                                   $\sigma_5/\sigma_1$ & $\sigma_6/\sigma_1$ & $\sigma_7/\sigma_1$ &  
                                   $\sigma_8/\sigma_1$ & $\sigma_9/\sigma_1$ \\ 
\hline
$0$ & $32.2$ & $0.96$ & $9.0 \cdot 10^{-5}$ & $8.7 \cdot 10^{-5}$ & $8.9\cdot 10^{-16}$ & 
           $8.7\cdot 10^{-16}$ & $7.4 \cdot 10^{-16}$ & $6.4 \cdot 10^{-16}$ & $5.5 \cdot 10^{-16}$ \\

 $1$ & $35.5$ & $0.77$ & $0.6 \cdot 10^{-3}$ & $ 0.6 \cdot 10^{-3}$ &  $1.1\cdot 10^{-14}$ & 
           $1.1\cdot 10^{-14}$ & $3.7 \cdot 10^{-16}$ & $3.4\cdot 10^{-16}$ & $2.8\cdot 10^{-16}$ \\

 $2$ &  $40.3$ & $0.48$ & $2.2\cdot 10^{-3}$ &  $1.6\cdot 10^{-3}$ & $8.6\cdot 10^{-14}$ &
            $8.3 \cdot 10^{-14}$ & $5.6 \cdot 10^{-16}$ & $5.0\cdot 10^{-16}$ & $4.6 \cdot 10^{-16}$\\

 $3$ & $38.0$ & $0.62$ & $0.8 \cdot 10^{-3}$ & $0.7 \cdot 10^{-3}$ & $6.2\cdot 10^{-13}$ & 
           $5.1\cdot 10^{-13}$ & $5.0 \cdot 10^{-16}$ & $4.4\cdot 10^{-16}$ & $4.4\cdot 10^{-16}$ \\ 

 $4$ & $37.0$ & $0.68$ & $2.1 \cdot 10^{-2}$ & $1.8\cdot 10^{-2}$ & $3.9 \cdot 10^{-12}$ & 
           $3.8 \cdot 10^{-12}$ & $6.6 \cdot 10^{-16}$ & $4.6 \cdot 10^{-16}$ & $4.5 \cdot 10^{-16}$ \\
           
 $5$ & $35.5$ & $0.77$ & $5.0\cdot 10^{-2}$ & $4.1\cdot 10^{-2}$ & $2.1 \cdot 10^{-11}$ & 
           $1.5\cdot 10^{-11}$ & $5.0\cdot 10^{-16}$ & $4.0\cdot 10^{-16}$ & $4.0\cdot 10^{-16}$ \\        

 $6$ & $38.3$ & $0.59$ & $9.2\cdot 10^{-2}$ & $5.8\cdot 10^{-2}$ & $9.2\cdot 10^{-11}$ & 
           $8.8\cdot 10^{-11}$ & $5.3\cdot 10^{-16}$ & $4.8\cdot 10^{-16}$ & $4.6\cdot 10^{-16}$ \\
           
 $7$ & $33.9$ & $0.83$ & $0.18$ & $0.13$ & $4.8\cdot 10^{-10}$ & $3.7\cdot 10^{-10}$ &
           $3.8\cdot 10^{-16}$ & $3.8 \cdot 10^{-16}$ & $3.4\cdot 10^{-16}$ \\        
           
 $8$ & $36.2$ & $0.68$ & $0.25$ & $0.09$ & $1.9 \cdot 10^{-9}$ & $1.8 \cdot 10^{-9}$ & 
           $4.1\cdot 10^{-16}$  & $4.0 \cdot 10^{-16}$ & $3.5\cdot 10^{-16}$ \\
         
 $9$ & $38.2$ &  $0.47$ & $0.29$ & $0.26$ & $7.3\cdot 10^{-9}$ & $7.0\cdot 10^{-9}$ & 
           $4.7\cdot 10^{-16}$ & $3.8\cdot 10^{-16}$ & $3.6\cdot 10^{-16}$ \\ 

$10$ & $28.7$ & $0.85$ & $0.60$ & $0.58$ & $3.6\cdot 10^{-8}$ & $2.9\cdot 10^{-8}$ &
            $7.9\cdot 10^{-16}$ & $4.7\cdot 10^{-16}$ & $4.5\cdot 10^{-16}$ \\

$11$ & $31.1$ & $0.73$ & $0.65$ & $0.35$ & $1.1\cdot 10^{-7}$ & $9.4\cdot 10^{-8}$ & 
            $5.5\cdot 10^{-16}$ & $5.0\cdot 10^{-16}$ & $4.7\cdot 10^{-16}$ \\

$12$ & $33.1$ & $0.65$ & $0.58$ & $0.25$ & $3.7\cdot 10^{-7}$ & $3.1\cdot 10^{-7}$ & 
            $5.3\cdot 10^{-16}$ & $5.0\cdot 10^{-16}$ & $4.8\cdot 10^{-16}$ \\

$13$ & $30.1$ & $0.69$ & $0.68$ & $0.51$ & $1.3\cdot 10^{-6}$ & $1.2\cdot 10^{-6}$ & 
            $7.0\cdot 10^{-16}$ & $5.7\cdot 10^{-16}$ & $5.3\cdot 10^{-16}$ \\

$14$ & $26.2$ & $0.95$ & $0.74$ & $0.67$ & $3.9\cdot 10^{-6}$ & $3.2\cdot 10^{-6}$ & 
            $7.2\cdot 10^{-16}$ & $6.9\cdot 10^{-16}$ & $6.1\cdot 10^{-16}$ \\

$15$ & $30.2$ & $0.74$ & $0.71$ & $0.38$ & $8.0\cdot 10^{-6}$ & $8.0\cdot 10^{-6}$ & 
            $6.2\cdot 10^{-16}$ & $6.0\cdot 10^{-16}$ & $5.3\cdot 10^{-16}$ \\

$16$ & $30.8$ & $0.82$ & $0.63$ & $0.21$ & $2.0 \cdot 10^{-5}$ & $1.7\cdot 10^{-5}$ & 
            $5.3\cdot 10^{-16}$ & $4.9\cdot 10^{-16}$ & $4.6\cdot 10^{-16}$ \\

$17$ & $28.4$ & $0.92$ & $0.67$ & $0.43$ & $5.5\cdot 10^{-5}$ & $5.2\cdot 10^{-5}$ & 
            $5.1\cdot 10^{-16}$ & $4.6\cdot 10^{-16}$ & $4.6\cdot 10^{-16}$ \\

$18$ & $28.5$ & $0.88$ & $0.68$ & $0.48$ & $1.3\cdot 10^{-4}$  & $1.1\cdot 10^{-4}$ & 
            $5.9\cdot 10^{-16}$ & $5.5\cdot 10^{-16}$ & $5.4\cdot 10^{-16}$ \\
            
$19$ & $30.0$ & $0.86$ & $0.51$ & $0.47$ & $2.6\cdot 10^{-4}$ & $2.3\cdot 10^{-4}$ & 
            $5.0\cdot 10^{-16}$ & $4.8\cdot 10^{-16}$ & $4.5\cdot 10^{-16}$ \\
 
$20$ & $31.6$ & $0.80$ & $0.56$ & $0.24$ & $5.3\cdot 10^{-4}$ & $3.4\cdot 10^{-4}$ &
            $5.3\cdot 10^{-16}$ & $4.7\cdot 10^{-16}$ & $4.5\cdot 10^{-16}$ \\
           
$21$ & $28.9$ & $0.90$ & $0.72$ & $0.23$ & $1.12\cdot 10^{-3}$ & $1.1\cdot 10^{-3}$ & 
            $4.6\cdot 10^{-16}$ & $4.4\cdot 10^{-16}$ & $4.3\cdot 10^{-16}$ \\

$22$ & $27.0$ & $0.98$ & $0.76$ & $0.47$ & $2.5\cdot 10^{-3}$ & $2.4\cdot 10^{-3}$ & 
            $6.2\cdot 10^{-16}$ & $5.6\cdot 10^{-16}$ & $4.8\cdot 10^{-16}$  \\
           
$23$ & $30.3$ & $0.67$ & $0.62$ & $0.59$ & $4.03\cdot 10^{-3}$ & $4.0\cdot 10^{-3}$ &
            $6.0\cdot 10^{-16}$ & $4.8\cdot 10^{-16}$ & $4.7\cdot 10^{-16}$ \\
           
$24$ & $28.0$ & $0.82$ & $0.78$ & $0.53$ & $8.3\cdot 10^{-3}$ & $6.9 \cdot 10^{-3}$ & 
            $8.2\cdot 10^{-16}$ & $7.6\cdot 10^{-16}$ & $4.8\cdot 10^{-16}$ \\
           
$25$ & $30.0$ & $0.77$ & $0.76$ & $0.23$ & $1.4\cdot 10^{-2}$ & $1.3\cdot 10^{-2}$ &
            $2.4\cdot 10^{-15}$ & $1.7\cdot 10^{-15}$ & $5.8\cdot 10^{-16}$ \\

$26$ & $29.4$ & $0.91$ & $0.67$ & $0.16$ & $2.4\cdot 10^{-2}$ & $2.0\cdot 10^{-2}$ & 
            $7.7\cdot 10^{-15}$ & $6.8\cdot 10^{-15}$ & $6.3\cdot 10^{-16}$ \\

$27$ & $27.4$ & $0.88$ & $0.87$ & $0.34$ & $4.2\cdot 10^{-2}$ & $3.7\cdot 10^{-2}$ & 
            $2.6\cdot 10^{-14}$ & $2.4\cdot 10^{-14}$ & $4.5\cdot 10^{-16}$ \\
            
$28$ & $29.4$ & $0.75$ & $0.71$ & $0.50$ & $5.8\cdot 10^{-2}$ & $5.7\cdot 10^{-2}$ &
            $6.8\cdot 10^{-14}$ & $5.6\cdot 10^{-14}$ & $5.6\cdot 10^{-16}$ \\
         
$29$ & $28.4$ & $0.81$ & $0.71$ & $0.56$ & $8.9\cdot 10^{-2}$ & $8.4\cdot 10^{-2}$ &
            $2.2\cdot 10^{-13}$ & $1.5\cdot 10^{-13}$ & $5.5 \cdot 10^{-16}$ \\
         
$30$ & $27.8$ & $0.86$ & $0.79$ & $0.44$ & $0.14$ & $8.8\cdot 10^{-2}$ & 
            $6.2\cdot 10^{-13}$ & $5.6\cdot 10^{-13}$ & $5.1\cdot 10^{-16}$ \\
                           
$31$ & $28.0$ & $0.96$ & $0.74$ & $0.20$ & $0.17$ & $0.12$ & 
            $1.2 \cdot 10^{-12}$ & $1.2\cdot 10^{-12}$ & $6.1\cdot 10^{-16}$ \\
     
$32$ & $26.3$ & $0.99$ & $0.89$ & $0.27$ & $0.19$ & $0.14$ & 
            $5.4\cdot 10^{-12}$ & $5.3\cdot 10^{-12}$ & $7.8\cdot 10^{-16}$ \\
\hline
\hline
\end{tabular}
\end{table}

\section{Proof of the lemmas}
\label{App:proveLem}
\begin{proof}{(Lemma~\ref{lem2}).}
For the basis set $\{ \mathbf{u}_{i} \}_{i=0}^{r_1-1}$ the following equality holds
\begin{equation}
\label{lmuiDecompose}
\mathbf{l}_{m}= \sum_{i=0}^{r_1-1} \alpha_{mi} \mathbf{u}_i.
\end{equation}
Then, according to~\eqref{Aalpfab}
\begin{equation}
\left[ \mathbf{l}^T_{m}, \mathbf{0} \right]_H = 
\left[ \sum_{i=0}^{r_1-1} \alpha_{mi} \mathbf{u}^T_i, \mathbf{0} \right]_H = 
 \sum_{i=0}^{r_1-1} \alpha_{mi} 
 \left[\mathbf{u}^T_i, \mathbf{0} \right]_H, \qquad
 \Leftrightarrow \qquad
 \mathbf{L}_m=\sum_{i=0}^{r_1-1} \alpha_{mi} 
 \mathbf{U}_i, \qquad \forall m \in [0, k-1].
\end{equation}
\end{proof}
\begin{proof}{(Lemma~\ref{lem3}).}
From the equality
$\mathbf{r}_{m}= \sum_{i=0}^{r_2-1} \beta_{mi} \mathbf{w}_i$
it follows that
\begin{equation*}
\left[\mathbf{0}^T, \mathbf{r}_{m} \right]_H = 
\left[\mathbf{0}^T, \sum_{i=0}^{r_2-1} \beta_{mi} \mathbf{w}_i \right]_H = 
 \sum_{i=0}^{r_2-1} \beta_{mi} 
 \left[ \mathbf{0}^T, \mathbf{w}_i \right]_H, \qquad
 \Leftrightarrow \qquad
 \mathbf{R}_m=\sum_{i=0}^{r_2-1} \beta_{mi} 
 \mathbf{W}_i, \qquad \forall m \in [0, k-1].
\end{equation*}
\end{proof}
\begin{proof}{(Lemma~\ref{lem4}).}
From definition~\eqref{lrDef} and the decomposition
$\mathbf{l}_{m}=\sum_{i=0}^{r_1-1}\gamma_{mi}\mathbf{u}_i, 
\phantom{x} \forall j_m \in [0,k]$,
it follows that
\begin{equation*}
\begin{pmatrix}
\mathbf{r}_{m} \\
l_{m,(M-1)}
\end{pmatrix}
=\mathbf{l}_{m}=
\sum_{i=0}^{r_1-1}\gamma_{mi}\mathbf{u}_i=
\sum_{i=0}^{r_1-1}\gamma_{mi}
\begin{pmatrix}
\mathbf{w}_i \\
u_{m,(M-1)}
\end{pmatrix},
\qquad
\Rightarrow
\qquad
\mathbf{r}_{m}=\sum_{i=0}^{r_1-1}\gamma_{mi} \mathbf{w}_i.
\end{equation*}
\end{proof}

\section{Solution for the harmonic oscillator potential}
\label{App:Osc}
In this section we analytically integrate 
equations~\eqref{Integral_Iter_k}, \eqref{Phi_iter_k}, \eqref{Init_F_n}
with the initial condition~\eqref{OscfV}. 

Let us define $F^{(n)}_k(x)\equiv \Psi^{(n)}_{k}(x)$ for harmonic potential~\eqref{OscfV}.
Starting from $k=n$,
\begin{equation*}
\Psi^{(n)}_{k}(x)= 
{\displaystyle \sqrt{\frac{\lambda}{\pi}}
\sqrt{\frac{\beta}{\pi}} }
\int_{-\infty}^{\infty}
 e^{-\beta(x+\xi)^2} e^{ -w_n (x+\xi)^2 \delta t  }
\, e^{-\lambda \xi^2 } d\xi,
\end{equation*}
and making use of the integral
\begin{equation*}
P(\alpha, \beta, y)=
\int_{-\infty}^{\infty}
p(\beta,y+\xi) p(\alpha, \xi) d\xi
=p\left({\textstyle \frac{\alpha \beta}{\alpha + \beta}} ,y\right)=
{\textstyle \sqrt{\frac{\alpha \beta}{\pi (\alpha + \beta)}} }
e^{-\frac{\alpha \beta}{\alpha + \beta} y^2},
\qquad y \in \mathbb{R}, \qquad \alpha, \beta > 0,
\end{equation*}
where $p(\alpha,x)$ is defined in~\eqref{Iksum}, we obtain
\begin{equation*}
\Psi^{(n)}_{n}(x)= \sqrt{\frac{\beta_{n}}{\gamma_n }}
\sqrt{\frac{\beta_{n-1}}{\pi }}  e^{-\beta_{n-1} x^2},
\qquad
\beta_{n-1}=\frac{\lambda \gamma_n}{\lambda + \gamma_n},
\qquad
\gamma_n = \beta_{n} + w_n \delta t,
\qquad
\beta_{n}=\beta.
\end{equation*}
For the next $k=n-1$, we have
\begin{equation*}
\Psi^{(n)}_{n-1}(x)= 
\sqrt{\frac{\lambda}{\pi}} \sqrt{\frac{\beta_{n}}{\gamma_n }}
\sqrt{\frac{\beta_{n-1}}{\pi}}
\int_{-\infty}^{\infty}
 e^{-\beta_{n-1}(x+\xi)^2} e^{ -w_{n-1} \frac{(x+\xi)^2}{1+\delta t} \delta t  }
\, e^{-\lambda \xi^2 } d\xi,
\end{equation*}
\begin{equation*}
\Psi^{(n)}_{n-1}(x)= 
\sqrt{\frac{\beta_{n}}{\gamma_n }}
\sqrt{\frac{\beta_{n-1}}{\gamma_{n-1} }}
\sqrt{\frac{\beta_{n-2}}{\pi }}  e^{-\beta_{n-2} x^2},
\qquad
\beta_{n-2}=\frac{\lambda \gamma_{n-1}}{\lambda + \gamma_{n-1}},
\qquad
\gamma_{n-1} = \beta_{n-1} + w_{n-1} \frac{\delta t}{1+\delta t}.
\end{equation*}
By induction, we conclude that
\begin{equation*}
\label{PsiSlnk}
\Psi^{(n)}_{k}(x)= 
\Gamma^{(n)}_{k}
\sqrt{\frac{\beta_{k-1}}{\pi }}  e^{-\beta_{k-1} x^2},
\qquad
\beta_{k-1}=\frac{\lambda \gamma_{k}}{\lambda + \gamma_{k}},
\qquad
\gamma_{k} = \beta_{k} + w_{k} \frac{\delta t}{1+(n-k)\delta t},
\end{equation*}
\begin{equation*}
\Gamma^{(n)}_{k}=\sqrt{\frac{\beta_{n}}{\gamma_n }}
\sqrt{\frac{\beta_{n-1}}{\gamma_{n-1} }}
\cdot \ldots \cdot
\sqrt{\frac{\beta_{k}}{\gamma_{k} }},
\qquad
1 \le k \le n
\end{equation*}

\bibliographystyle{model6-num-names}
\bibliography{misha,pathrefs,our,tensor}

\begin{thebibliography}{108}
\providecommand{\natexlab}[1]{#1}
\providecommand{\url}[1]{\texttt{#1}}
\providecommand{\href}[2]{#2}
\providecommand{\path}[1]{#1}
\providecommand{\DOIprefix}{doi:}
\providecommand{\ArXivprefix}{arXiv:}
\providecommand{\URLprefix}{URL: }
\providecommand{\Pubmedprefix}{pmid:}
\providecommand{\doi}[1]{\href{http://dx.doi.org/#1}{\path{#1}}}
\providecommand{\Pubmed}[1]{\href{pmid:#1}{\path{#1}}}
\providecommand{\BIBand}{and}
\providecommand{\bibinfo}[2]{#2}
\ifx\xfnm\undefined \def\xfnm[#1]{\unskip,\space#1}\fi
\makeatletter\def\@biblabel#1{#1.}\makeatother
\bibitem[{Feynman(1948)}]{feynm-path-1948}
\bibinfo{author}{Feynman\xfnm[ R.P.]}.
\newblock \bibinfo{title}{Space-time approach to non-relativistic quantum
  mechanics}.
\newblock \emph{\bibinfo{journal}{Rev Mod Phys}}
  \bibinfo{year}{1948};\bibinfo{volume}{20}:\bibinfo{pages}{367--387}.
\newblock \URLprefix \url{http://link.aps.org/doi/10.1103/RevModPhys.20.367}.
  \DOIprefix\doi{10.1103/RevModPhys.20.367}.
\bibitem[{Feynman and Hibbs(1965)}]{fh-quantmech-1965}
\bibinfo{author}{Feynman\xfnm[ R.]}, \bibinfo{author}{Hibbs\xfnm[ A.]}.
\newblock \bibinfo{title}{Quantum mechanics and path integrals}.
\newblock International series in pure and applied physics;
  \bibinfo{publisher}{McGraw-Hill}; \bibinfo{year}{1965}.
\bibitem[{Garrod(1966)}]{gc-hampath-1966}
\bibinfo{author}{Garrod\xfnm[ C.]}.
\newblock \bibinfo{title}{Hamiltonian path-integral methods}.
\newblock \emph{\bibinfo{journal}{Rev Mod Phys}}
  \bibinfo{year}{1966};\bibinfo{volume}{38}:\bibinfo{pages}{483--494}.
\newblock \DOIprefix\doi{10.1103/RevModPhys.38.483}.
\bibitem[{Abrikosov et~al.(1975)Abrikosov, Dzyaloshinskii and
  Gorkov}]{agd-greenbook-1975}
\bibinfo{author}{Abrikosov\xfnm[ A.A.]}, \bibinfo{author}{Dzyaloshinskii\xfnm[
  I.]}, \bibinfo{author}{Gorkov\xfnm[ L.P.]}.
\newblock \bibinfo{title}{Methods of quantum field theory in statistical
  physics}.
\newblock \bibinfo{publisher}{Dover}; \bibinfo{year}{1975}.
\newblock \URLprefix \url{https://cds.cern.ch/record/107441}.
\bibitem[{Mahan(2000)}]{mahan-manypart-2000}
\bibinfo{author}{Mahan\xfnm[ G.]}.
\newblock \bibinfo{title}{Many-Particle Physics}.
\newblock Physics of Solids and Liquids; \bibinfo{publisher}{Springer};
  \bibinfo{year}{2000}.
\newblock \URLprefix \url{http://books.google.co.uk/books?id=xzSgZ4-yyMEC}.
\bibitem[{Norman~Bleistein(2010)}]{bh-asimpthotics-2010}
\bibinfo{author}{Norman~Bleistein\xfnm[ R.A.H.]}.
\newblock \bibinfo{title}{Asymptotic Expansions of Integrals}.
\newblock \bibinfo{publisher}{Dover Edition}; \bibinfo{year}{2010}.
\bibitem[{Zinn-Justin(1996)}]{jzj-qftcf-1996}
\bibinfo{author}{Zinn-Justin\xfnm[ J.]}.
\newblock \bibinfo{title}{Quantum field theory and critical phenomena (3d
  edition)}.
\newblock \bibinfo{publisher}{Clarendon Press, Oxford}; \bibinfo{year}{1996}.
\bibitem[{Polonyi(2003)}]{jp-frgm-2003}
\bibinfo{author}{Polonyi\xfnm[ J.]}.
\newblock \bibinfo{title}{Lectures on the functional renormalization group
  method}.
\newblock \emph{\bibinfo{journal}{Central European Journal of Physics}}
  \bibinfo{year}{2003};\bibinfo{volume}{1}(\bibinfo{number}{1}).
\newblock \URLprefix \url{http://dx.doi.org/10.2478/BF02475552}.
  \DOIprefix\doi{10.2478/BF02475552}.
\bibitem[{Dick et~al.(2012)Dick, Kuo, Peters and Sloan}]{mc-proceedings-2012}
\bibinfo{author}{Dick\xfnm[ J.]}, \bibinfo{author}{Kuo\xfnm[ F.]},
  \bibinfo{author}{Peters\xfnm[ G.]}, \bibinfo{author}{Sloan\xfnm[ I.]}.
\newblock \bibinfo{title}{Monte Carlo and Quasi-Monte Carlo Methods}.
\newblock \bibinfo{publisher}{Springer}; \bibinfo{year}{2012}.
\newblock \DOIprefix\doi{10.1007/978-3-642-41095-6}.
\bibitem[{Holtz(2011)}]{mh-sparse-2011}
\bibinfo{author}{Holtz\xfnm[ M.]}.
\newblock \bibinfo{title}{Sparse Grid Quadrature in High Dimensions with
  Applications in Finance and Insurance}.
\newblock \bibinfo{year}{2011}.
\newblock \DOIprefix\doi{10.1007/978-3-642-16004-2}.
\bibitem[{Garcke and Griebel(2013)}]{sp-proceedings-2013}
\bibinfo{author}{Garcke\xfnm[ J.]}, \bibinfo{author}{Griebel\xfnm[ M.]}.
\newblock \bibinfo{title}{Sparse Grids and Applications}.
\newblock \bibinfo{publisher}{Springer}; \bibinfo{year}{2013}.
\newblock \DOIprefix\doi{10.1007/978-3-642-31703-3}.
\bibitem[{Wong(2014)}]{wong-pathrev-2014}
\bibinfo{author}{Wong\xfnm[ K.Y.]}.
\newblock \bibinfo{title}{Review of feynman’s path integral in quantum
  statistics: from the molecular schrodinger equation to kleinert’s
  variational perturbation theory}.
\newblock \emph{\bibinfo{journal}{Communications in Computational Physics}}
  \bibinfo{year}{2014};\bibinfo{volume}{15}(\bibinfo{number}{4}).
\newblock \DOIprefix\doi{10.4208/cicp.140313.070513s}.
\bibitem[{Masujima(2008)}]{mm-pathbook-2008}
\bibinfo{author}{Masujima\xfnm[ M.]}.
\newblock \bibinfo{title}{Path Integral Quantization and Stochastic
  Quantization}.
\newblock \bibinfo{publisher}{Springer}; \bibinfo{year}{2008}.
\bibitem[{Kleinert(2009)}]{hk-pathint-2009}
\bibinfo{author}{Kleinert\xfnm[ H.]}.
\newblock \bibinfo{title}{Path Integrals in Quantum Mechanics, Statistics,
  Polymer Physics, and Financial Markets, 5th ed.}
\newblock \bibinfo{publisher}{World Scientific}; \bibinfo{year}{2009}.
\bibitem[{Crank(1975)}]{crank-diffus-1975}
\bibinfo{author}{Crank\xfnm[ J.]}.
\newblock \bibinfo{title}{The Mathematics of Diffusion}.
\newblock \bibinfo{publisher}{Clarendon press, Oxford}; \bibinfo{year}{1975}.
\bibitem[{Bass(1998)}]{bass-diffus-1998}
\bibinfo{author}{Bass\xfnm[ R.F.]}.
\newblock \bibinfo{title}{Diffusions and Elliptic Operators}.
\newblock \bibinfo{publisher}{Springer}; \bibinfo{year}{1998}.
\bibitem[{Chaichian and Demichev(2001)}]{chd-pathint-2001}
\bibinfo{author}{Chaichian\xfnm[ A.]}, \bibinfo{author}{Demichev\xfnm[ A.]}.
\newblock \bibinfo{title}{Path Integrals in Physics}; vol.~\bibinfo{volume}{I}.
\newblock \bibinfo{publisher}{IoP publishing}; \bibinfo{year}{2001}.
\bibitem[{Borodin and Salminen(2002)}]{bs-brownmot-2002}
\bibinfo{author}{Borodin\xfnm[ A.N.]}, \bibinfo{author}{Salminen\xfnm[ P.]}.
\newblock \bibinfo{title}{Handbook of Brownian motion. Facts and formulae.}
\newblock \bibinfo{publisher}{Springer}; \bibinfo{year}{2002}.
\bibitem[{Kac(1951)}]{kac-ipath-1951}
\bibinfo{author}{Kac\xfnm[ M.]}.
\newblock \bibinfo{title}{On some connections between probability theory and
  differential and integral equations}.
\newblock \bibinfo{year}{1951}.
\newblock \URLprefix \url{http://projecteuclid.org/euclid.bsmsp/1200500229}.
\bibitem[{Karatzas and Shreve(1991)}]{ks-browmot-1991}
\bibinfo{author}{Karatzas\xfnm[ I.]}, \bibinfo{author}{Shreve\xfnm[ S.E.]}.
\newblock \bibinfo{title}{Brownian motion and partial differential equations}.
\newblock \bibinfo{publisher}{Springer}; \bibinfo{year}{1991}.
\bibitem[{Mazo(2002)}]{mazo-browmot-2002}
\bibinfo{author}{Mazo\xfnm[ R.M.]}.
\newblock \bibinfo{title}{Brownian motion. Fluctuations, Dynamics, and
  Applications}.
\newblock \bibinfo{publisher}{Clarendon Press, Oxford}; \bibinfo{year}{2002}.
\bibitem[{Nelson(2001)}]{nels-browmot-2001}
\bibinfo{author}{Nelson\xfnm[ E.]}.
\newblock \bibinfo{title}{Dynamical Theories of Brownian motion.}
\newblock \bibinfo{publisher}{Princeton University Press};
  \bibinfo{year}{2001}.
\bibitem[{Smolyak(1964)}]{smolyak-1963}
\bibinfo{author}{Smolyak\xfnm[ S.A.]}.
\newblock \bibinfo{title}{Quadrature and interpolation formulas for tensor
  products of certain class of functions}.
\newblock \emph{\bibinfo{journal}{Dokl Akad Nauk SSSR}}
  \bibinfo{year}{1964};\bibinfo{volume}{148}(\bibinfo{number}{5}):\bibinfo{pages}{1042--1053}.
\newblock \bibinfo{note}{Transl.: Soviet Math. Dokl. 4:240-243, 1963}.
\bibitem[{Gerstner and Griebel(2003)}]{gerstner-sparsequad-2003}
\bibinfo{author}{Gerstner\xfnm[ T.]}, \bibinfo{author}{Griebel\xfnm[ M.]}.
\newblock \bibinfo{title}{Dimension-adaptive tensor-product quadrature}.
\newblock \emph{\bibinfo{journal}{Computing}}
  \bibinfo{year}{2003};\bibinfo{volume}{71}:\bibinfo{pages}{65--87}.
\newblock \DOIprefix\doi{10.1007/s00607-003-0015-5}.
\bibitem[{Gerstner and Griebel(1998)}]{gerstner-sparsegrid-1998}
\bibinfo{author}{Gerstner\xfnm[ T.]}, \bibinfo{author}{Griebel\xfnm[ M.]}.
\newblock \bibinfo{title}{Numerical integration using sparse grids}.
\newblock \emph{\bibinfo{journal}{Numerical Algorithms}}
  \bibinfo{year}{1998};\bibinfo{volume}{18}(\bibinfo{number}{3-4}):\bibinfo{pages}{209--232}.
\newblock \URLprefix \url{http://dx.doi.org/10.1023/A%3A1019129717644}.
  \DOIprefix\doi{10.1023/A:1019129717644}.
\bibitem[{Makri and Miller(1987)}]{makri-mc-1987}
\bibinfo{author}{Makri\xfnm[ N.]}, \bibinfo{author}{Miller\xfnm[ W.H.]}.
\newblock \bibinfo{title}{Monte carlo integration with oscillatory integrands:
  implications for feynman path integration in real time}.
\newblock \emph{\bibinfo{journal}{Chemical Physics Letters}}
  \bibinfo{year}{1987};\bibinfo{volume}{139}(\bibinfo{number}{1}):\bibinfo{pages}{10
  -- 14}.
\newblock \URLprefix
  \url{http://www.sciencedirect.com/science/article/pii/0009261487801422}.
  \DOIprefix\doi{http://dx.doi.org/10.1016/0009-2614(87)80142-2}.
\bibitem[{Gorshkov et~al.(2013)Gorshkov, Tretiak and
  Mozyrsky}]{tretiak-mc-2013}
\bibinfo{author}{Gorshkov\xfnm[ V.N.]}, \bibinfo{author}{Tretiak\xfnm[ S.]},
  \bibinfo{author}{Mozyrsky\xfnm[ D.]}.
\newblock \bibinfo{title}{Semiclassical monte-carlo approach for modelling
  non-adiabatic dynamics in extended molecules}.
\newblock \emph{\bibinfo{journal}{Nat Commun}}
  \bibinfo{year}{2013};\bibinfo{volume}{4}:\bibinfo{pages}{1 — 8}.
\newblock \DOIprefix\doi{10.1038/ncomms3144}.
\bibitem[{de~Lathauwer(2009)}]{dela-survey-2009}
\bibinfo{author}{de~Lathauwer\xfnm[ L.]}.
\newblock \bibinfo{title}{A survey of tensor methods}.
\newblock In: \emph{\bibinfo{booktitle}{IEEE International Symposium on
  Circuits and Systems}}. \bibinfo{year}{2009}:\unskip
  \bibinfo{pages}{2773--2776}.
\newblock \DOIprefix\doi{10.1109/iscas.2009.5118377}.
\bibitem[{Smilde et~al.(2004)Smilde, Bro and Geladi}]{smilde-book-2004}
\bibinfo{author}{Smilde\xfnm[ A.]}, \bibinfo{author}{Bro\xfnm[ R.]},
  \bibinfo{author}{Geladi\xfnm[ P.]}.
\newblock \bibinfo{title}{Multi-way analysis with applications in the chemical
  sciences}.
\newblock \bibinfo{publisher}{Wiley}; \bibinfo{year}{2004}.
\bibitem[{Khoromskij(2010{\natexlab{a}})}]{khor-lectures-2010}
\bibinfo{author}{Khoromskij\xfnm[ B.N.]}.
\newblock \bibinfo{title}{Introduction to tensor numerical methods in
  scientific computing}.
\newblock \bibinfo{type}{Preprint, Lecture Notes} \bibinfo{number}{06-2011};
  University of {Z\"urich}; \bibinfo{year}{2010}{\natexlab{a}}.
\newblock \URLprefix
  \url{http://www.math.uzh.ch/fileadmin/math/preprints/06_11.pdf}.
\bibitem[{Khoromskij(2012)}]{khor-survey-2011}
\bibinfo{author}{Khoromskij\xfnm[ B.N.]}.
\newblock \bibinfo{title}{Tensor-structured numerical methods in scientific
  computing: {Survey} on recent advances}.
\newblock \emph{\bibinfo{journal}{Chemometr Intell Lab Syst}}
  \bibinfo{year}{2012};\bibinfo{volume}{110}(\bibinfo{number}{1}):\bibinfo{pages}{1--19}.
\newblock \DOIprefix\doi{10.1016/j.chemolab.2011.09.001}.
\bibitem[{Hitchcock(1927{\natexlab{a}})}]{hitchcock-sum-1927}
\bibinfo{author}{Hitchcock\xfnm[ F.L.]}.
\newblock \bibinfo{title}{The expression of a tensor or a polyadic as a sum of
  products}.
\newblock \emph{\bibinfo{journal}{J Math Phys}}
  \bibinfo{year}{1927}{\natexlab{a}};\bibinfo{volume}{6}(\bibinfo{number}{1}):\bibinfo{pages}{164--189}.
\bibitem[{Hitchcock(1927{\natexlab{b}})}]{hitchcock-rank-1927}
\bibinfo{author}{Hitchcock\xfnm[ F.L.]}.
\newblock \bibinfo{title}{Multiple invariants and generalized rank of a p-way
  matrix or tensor}.
\newblock \emph{\bibinfo{journal}{J Math Phys}}
  \bibinfo{year}{1927}{\natexlab{b}};\bibinfo{volume}{7}(\bibinfo{number}{1}):\bibinfo{pages}{39--79}.
\bibitem[{Grasedyck et~al.(2013)Grasedyck, Kressner and Tobler}]{treview-2012}
\bibinfo{author}{Grasedyck\xfnm[ L.]}, \bibinfo{author}{Kressner\xfnm[ D.]},
  \bibinfo{author}{Tobler\xfnm[ C.]}.
\newblock \bibinfo{title}{A literature survey of low-rank tensor approximation
  techniques}.
\newblock \emph{\bibinfo{journal}{GAMM-Mitteilungen}}
  \bibinfo{year}{2013};\bibinfo{volume}{36}(\bibinfo{number}{1}):\bibinfo{pages}{53--78}.
\newblock \URLprefix \url{http://dx.doi.org/10.1002/gamm.201310004}.
  \DOIprefix\doi{10.1002/gamm.201310004}.
\bibitem[{Kolda and Bader(2009)}]{kolda-review-2009}
\bibinfo{author}{Kolda\xfnm[ T.G.]}, \bibinfo{author}{Bader\xfnm[ B.W.]}.
\newblock \bibinfo{title}{Tensor decompositions and applications}.
\newblock \emph{\bibinfo{journal}{SIAM Review}}
  \bibinfo{year}{2009};\bibinfo{volume}{51}(\bibinfo{number}{3}):\bibinfo{pages}{455--500}.
\newblock \DOIprefix\doi{10.1137/07070111X}.
\bibitem[{Grasedyck and Wolfgang(2011)}]{lghw-httt-2011}
\bibinfo{author}{Grasedyck\xfnm[ L.]}, \bibinfo{author}{Wolfgang\xfnm[ H.]}.
\newblock \bibinfo{title}{An introduction to hierarchical h-rank and tt-rank of
  tensors with examples}.
\newblock \emph{\bibinfo{journal}{Computational Methods in Applied
  Mathematics}}
  \bibinfo{year}{2011};\bibinfo{volume}{11}(\bibinfo{number}{3}):\bibinfo{pages}{291}.
\bibitem[{Oseledets et~al.(2008)Oseledets, Savostianov and
  Tyrtyshnikov}]{ost-tucker-2008}
\bibinfo{author}{Oseledets\xfnm[ I.V.]}, \bibinfo{author}{Savostianov\xfnm[
  D.V.]}, \bibinfo{author}{Tyrtyshnikov\xfnm[ E.E.]}.
\newblock \bibinfo{title}{Tucker dimensionality reduction of three-dimensional
  arrays in linear time}.
\newblock \emph{\bibinfo{journal}{SIAM J Matrix Anal Appl}}
  \bibinfo{year}{2008};\bibinfo{volume}{30}(\bibinfo{number}{3}):\bibinfo{pages}{939--956}.
\newblock \DOIprefix\doi{10.1137/060655894}.
\bibitem[{Tucker(1966)}]{tucker-factor-1966}
\bibinfo{author}{Tucker\xfnm[ L.R.]}.
\newblock \bibinfo{title}{Some mathematical notes on three-mode factor
  analysis}.
\newblock \emph{\bibinfo{journal}{Psychometrika}}
  \bibinfo{year}{1966};\bibinfo{volume}{31}:\bibinfo{pages}{279--311}.
\newblock \DOIprefix\doi{10.1007/BF02289464}.
\bibitem[{Tucker(1963)}]{tucker-factor-1963}
\bibinfo{author}{Tucker\xfnm[ L.]}.
\newblock \bibinfo{title}{Implications of factor analysis of three-way matrices
  for measurement of change}.
\newblock \emph{\bibinfo{journal}{Problems in measuring change}}
  \bibinfo{year}{1963};:\bibinfo{pages}{122--137}.
\bibitem[{Tucker(1964)}]{tucker-factor-1964}
\bibinfo{author}{Tucker\xfnm[ L.R.]}.
\newblock \bibinfo{title}{The extension of factor analysis to three-dimensional
  matrices}.
\newblock \emph{\bibinfo{journal}{Contributions to mathematical psychology}}
  \bibinfo{year}{1964};:\bibinfo{pages}{109--127}.
\bibitem[{de~Lathauwer et~al.(2000)de~Lathauwer, de~Moor and
  Vandewalle}]{lathauwer-svd-2000}
\bibinfo{author}{de~Lathauwer\xfnm[ L.]}, \bibinfo{author}{de~Moor\xfnm[ B.]},
  \bibinfo{author}{Vandewalle\xfnm[ J.]}.
\newblock \bibinfo{title}{A multilinear singular value decomposition}.
\newblock \emph{\bibinfo{journal}{SIAM J Matrix Anal Appl}}
  \bibinfo{year}{2000};\bibinfo{volume}{21}:\bibinfo{pages}{1253--1278}.
\newblock \DOIprefix\doi{10.1137/s0895479896305696}.
\bibitem[{Oseledets and Tyrtyshnikov(2010)}]{ot-ttcross-2010}
\bibinfo{author}{Oseledets\xfnm[ I.V.]}, \bibinfo{author}{Tyrtyshnikov\xfnm[
  E.E.]}.
\newblock \bibinfo{title}{{TT-cross} approximation for multidimensional
  arrays}.
\newblock \emph{\bibinfo{journal}{Linear Algebra Appl}}
  \bibinfo{year}{2010};\bibinfo{volume}{432}(\bibinfo{number}{1}):\bibinfo{pages}{70--88}.
\newblock \DOIprefix\doi{10.1016/j.laa.2009.07.024}.
\bibitem[{Oseledets and Tyrtyshnikov(2009)}]{ot-tt-2009}
\bibinfo{author}{Oseledets\xfnm[ I.V.]}, \bibinfo{author}{Tyrtyshnikov\xfnm[
  E.E.]}.
\newblock \bibinfo{title}{Breaking the curse of dimensionality, or how to use
  {SVD} in many dimensions}.
\newblock \emph{\bibinfo{journal}{SIAM J Sci Comput}}
  \bibinfo{year}{2009};\bibinfo{volume}{31}(\bibinfo{number}{5}):\bibinfo{pages}{3744--3759}.
\newblock \DOIprefix\doi{10.1137/090748330}.
\bibitem[{Oseledets(2011)}]{osel-tt-2011}
\bibinfo{author}{Oseledets\xfnm[ I.V.]}.
\newblock \bibinfo{title}{Tensor-train decomposition}.
\newblock \emph{\bibinfo{journal}{SIAM J Sci Comput}}
  \bibinfo{year}{2011};\bibinfo{volume}{33}(\bibinfo{number}{5}):\bibinfo{pages}{2295--2317}.
\newblock \DOIprefix\doi{10.1137/090752286}.
\bibitem[{Hackbusch and {K\"uhn}(2009)}]{hk-ht-2009}
\bibinfo{author}{Hackbusch\xfnm[ W.]}, \bibinfo{author}{{K\"uhn}\xfnm[ S.]}.
\newblock \bibinfo{title}{A new scheme for the tensor representation}.
\newblock \emph{\bibinfo{journal}{J Fourier Anal Appl}}
  \bibinfo{year}{2009};\bibinfo{volume}{15}(\bibinfo{number}{5}):\bibinfo{pages}{706--722}.
\newblock \DOIprefix\doi{10.1007/s00041-009-9094-9}.
\bibitem[{Grasedyck(2010)}]{gras-hsvd-2010}
\bibinfo{author}{Grasedyck\xfnm[ L.]}.
\newblock \bibinfo{title}{Hierarchical singular value decomposition of
  tensors}.
\newblock \emph{\bibinfo{journal}{SIAM J Matrix Anal Appl}}
  \bibinfo{year}{2010};\bibinfo{volume}{31}(\bibinfo{number}{4}):\bibinfo{pages}{2029--2054}.
\newblock \DOIprefix\doi{10.1137/090764189}.
\bibitem[{Hackbusch(2012)}]{hackbusch-2012}
\bibinfo{author}{Hackbusch\xfnm[ W.]}.
\newblock \bibinfo{title}{Tensor spaces and numerical tensor calculus}.
\newblock \bibinfo{publisher}{Springer--Verlag, Berlin}; \bibinfo{year}{2012}.
\newblock ISBN \bibinfo{isbn}{978-3642280269}.
\bibitem[{Khoromskij et~al.(2009)Khoromskij, Khoromskaia, Chinnamsetty and
  Flad}]{mpi-chem3d-2009}
\bibinfo{author}{Khoromskij\xfnm[ B.N.]}, \bibinfo{author}{Khoromskaia\xfnm[
  V.]}, \bibinfo{author}{Chinnamsetty\xfnm[ S.R.]}, \bibinfo{author}{Flad\xfnm[
  H.J.]}.
\newblock \bibinfo{title}{Tensor decomposition in electronic structure
  calculations on {3D Cartesian grids}}.
\newblock \emph{\bibinfo{journal}{J Comput Phys}}
  \bibinfo{year}{2009};\bibinfo{volume}{228}(\bibinfo{number}{16}):\bibinfo{pages}{5749--5762}.
\newblock \DOIprefix\doi{10.1016/j.jcp.2009.04.043}.
\bibitem[{Khoromskij and Khoromskaia(2009)}]{khor-ml-2009}
\bibinfo{author}{Khoromskij\xfnm[ B.N.]}, \bibinfo{author}{Khoromskaia\xfnm[
  V.]}.
\newblock \bibinfo{title}{Multigrid accelerated tensor approximation of
  function related multidimensional arrays}.
\newblock \emph{\bibinfo{journal}{SIAM J Sci Comput}}
  \bibinfo{year}{2009};\bibinfo{volume}{31}(\bibinfo{number}{4}):\bibinfo{pages}{3002--3026}.
\newblock \DOIprefix\doi{10.1137/080730408}.
\bibitem[{Khoromskaia and Khoromskij(2015)}]{khorombv-qch-2015}
\bibinfo{author}{Khoromskaia\xfnm[ V.]}, \bibinfo{author}{Khoromskij\xfnm[
  B.]}.
\newblock \bibinfo{title}{Tensor numerical methods in quantum chemistry: from
  hartree-fock to excitation energies}.
\newblock \emph{\bibinfo{journal}{Phys Chem Chem Phys}}
  \bibinfo{year}{2015};\URLprefix \url{http://arxiv.org/abs/1504.06289}.
  \DOIprefix\doi{10.1039/c5cp01215e}.
\bibitem[{Schneider et~al.(2009)Schneider, Rohwedder and
  Blauert}]{schn-dft-2009}
\bibinfo{author}{Schneider\xfnm[ R.]}, \bibinfo{author}{Rohwedder\xfnm[ T.]},
  \bibinfo{author}{Blauert\xfnm[ J.]}.
\newblock \bibinfo{title}{Direct minimization for calculating invariant
  subspaces in density functional computations of the electronic structure}.
\newblock \emph{\bibinfo{journal}{Journal of Comp Math}}
  \bibinfo{year}{2009};\bibinfo{volume}{27}:\bibinfo{pages}{360}.
\bibitem[{{Flad, Heinz-Jürgen} et~al.(2007){Flad, Heinz-Jürgen}, {Hackbusch,
  Wolfgang} and {Schneider, Reinhold}}]{schn-dft-2007}
\bibinfo{author}{{Flad, Heinz-Jürgen}\xfnm[]}, \bibinfo{author}{{Hackbusch,
  Wolfgang}\xfnm[]}, \bibinfo{author}{{Schneider, Reinhold}\xfnm[]}.
\newblock \bibinfo{title}{Best n-term approximation in electronic structure
  calculations. ii. jastrow factors}.
\newblock \emph{\bibinfo{journal}{ESAIM: M2AN}}
  \bibinfo{year}{2007};\bibinfo{volume}{41}(\bibinfo{number}{2}):\bibinfo{pages}{261--279}.
\newblock \URLprefix \url{http://dx.doi.org/10.1051/m2an:2007016}.
  \DOIprefix\doi{10.1051/m2an:2007016}.
\bibitem[{Khoromskij et~al.(2011)Khoromskij, Khoromskaia and
  Flad.}]{khor-chem-2011}
\bibinfo{author}{Khoromskij\xfnm[ B.N.]}, \bibinfo{author}{Khoromskaia\xfnm[
  V.]}, \bibinfo{author}{Flad.\xfnm[ H.J.]}.
\newblock \bibinfo{title}{Numerical solution of the {Hartree--Fock} equation in
  multilevel tensor-structured format}.
\newblock \emph{\bibinfo{journal}{SIAM J Sci Comput}}
  \bibinfo{year}{2011};\bibinfo{volume}{33}(\bibinfo{number}{1}):\bibinfo{pages}{45--65}.
\newblock \DOIprefix\doi{10.1137/090777372}.
\bibitem[{Kazeev and Khoromskij(2012)}]{kazkh-laplace-2012}
\bibinfo{author}{Kazeev\xfnm[ V.A.]}, \bibinfo{author}{Khoromskij\xfnm[ B.N.]}.
\newblock \bibinfo{title}{{Low-Rank explicit QTT representation of the Laplace
  operator and inverse}}.
\newblock \emph{\bibinfo{journal}{SIAM journal on matrix analysis and
  applications}}
  \bibinfo{year}{2012};\bibinfo{volume}{33}(\bibinfo{number}{3}):\bibinfo{pages}{742--758}.
\newblock \URLprefix
  \url{http://www.mis.mpg.de/de/publications/preprints/2010/prepr2010-75.html}.
  \DOIprefix\doi{10.1137/100820479}.
\bibitem[{Hackbusch and
  Khoromskij(2006{\natexlab{a}})}]{khor-low-rank-kron-P1-2006}
\bibinfo{author}{Hackbusch\xfnm[ W.]}, \bibinfo{author}{Khoromskij\xfnm[
  B.N.]}.
\newblock \bibinfo{title}{Low-rank {Kronecker-product} approximation to
  multi-dimensional nonlocal operators. {I}. {Separable} approximation of
  multi-variate functions}.
\newblock \emph{\bibinfo{journal}{Computing}}
  \bibinfo{year}{2006}{\natexlab{a}};\bibinfo{volume}{76}(\bibinfo{number}{3-4}):\bibinfo{pages}{177--202}.
\newblock \DOIprefix\doi{10.1007/s00607-005-0144-0}.
\bibitem[{Hackbusch and
  Khoromskij(2006{\natexlab{b}})}]{khor-low-rank-kron-P2-2006}
\bibinfo{author}{Hackbusch\xfnm[ W.]}, \bibinfo{author}{Khoromskij\xfnm[
  B.N.]}.
\newblock \bibinfo{title}{Low-rank {Kronecker-product} approximation to
  multi-dimensional nonlocal operators. {II}. {HKT} representation of certain
  operators}.
\newblock \emph{\bibinfo{journal}{Computing}}
  \bibinfo{year}{2006}{\natexlab{b}};\bibinfo{volume}{76}(\bibinfo{number}{3-4}):\bibinfo{pages}{203--225}.
\newblock \DOIprefix\doi{10.1007/s00607-005-0145-z}.
\bibitem[{Beylkin and Mohlenkamp(2002)}]{beylkin-2002}
\bibinfo{author}{Beylkin\xfnm[ G.]}, \bibinfo{author}{Mohlenkamp\xfnm[ M.J.]}.
\newblock \bibinfo{title}{{Numerical operator calculus in higher dimensions}}.
\newblock \emph{\bibinfo{journal}{Proc Nat Acad Sci USA}}
  \bibinfo{year}{2002};\bibinfo{volume}{99}(\bibinfo{number}{16}):\bibinfo{pages}{10246--10251}.
\newblock \DOIprefix\doi{10.1073/pnas.112329799}.
\bibitem[{Gavrilyuk et~al.(2005)Gavrilyuk, Hackbusch and
  Khoromskij}]{GHK-ten_inverse_ellipt-2005}
\bibinfo{author}{Gavrilyuk\xfnm[ I.P.]}, \bibinfo{author}{Hackbusch\xfnm[ W.]},
  \bibinfo{author}{Khoromskij\xfnm[ B.N.]}.
\newblock \bibinfo{title}{Tensor-product approximation to the inverse and
  related operators in high-dimensional elliptic problems}.
\newblock \emph{\bibinfo{journal}{Computing}}
  \bibinfo{year}{2005};(\bibinfo{number}{74}):\bibinfo{pages}{131--157}.
\bibitem[{Khoromskij(2009)}]{khor-prec-2009}
\bibinfo{author}{Khoromskij\xfnm[ B.N.]}.
\newblock \bibinfo{title}{Tensor-structured preconditioners and approximate
  inverse of elliptic operators in {$\mathbb{R}^d$}}.
\newblock \emph{\bibinfo{journal}{Constr Approx}}
  \bibinfo{year}{2009};(\bibinfo{number}{30}):\bibinfo{pages}{599--620}.
\newblock \DOIprefix\doi{10.1007/s00365-009-9068-9}.
\bibitem[{Khoromskij and Khoromskaia(2007)}]{khor-tuckertype-2007}
\bibinfo{author}{Khoromskij\xfnm[ B.N.]}, \bibinfo{author}{Khoromskaia\xfnm[
  V.]}.
\newblock \bibinfo{title}{Low rank {T}ucker-type tensor approximation to
  classical potentials}.
\newblock \emph{\bibinfo{journal}{Central European journal of mathematics}}
  \bibinfo{year}{2007};\bibinfo{volume}{5}(\bibinfo{number}{3}):\bibinfo{pages}{523--550}.
\newblock \DOIprefix\doi{10.2478/s11533-007-0018-0}.
\bibitem[{Hackbusch and Khoromskij(2007)}]{hh-jc-2007}
\bibinfo{author}{Hackbusch\xfnm[ W.]}, \bibinfo{author}{Khoromskij\xfnm[
  B.N.]}.
\newblock \bibinfo{title}{Tensor-product approximation to operators and
  functions in high dimensions}.
\newblock \emph{\bibinfo{journal}{Journal of Complexity}}
  \bibinfo{year}{2007};\bibinfo{volume}{23}(\bibinfo{number}{4–6}):\bibinfo{pages}{697
  -- 714}.
\newblock \URLprefix
  \url{http://www.sciencedirect.com/science/article/pii/S0885064X07000532}.
  \DOIprefix\doi{http://dx.doi.org/10.1016/j.jco.2007.03.007};
  \bibinfo{note}{festschrift for the 60th Birthday of Henryk Woźniakowski}.
\bibitem[{Hackbusch and Braess(2005)}]{hackbra-expsum-2005}
\bibinfo{author}{Hackbusch\xfnm[ W.]}, \bibinfo{author}{Braess\xfnm[ D.]}.
\newblock \bibinfo{title}{Approximation of {$\frac{1}{x}$} by exponential sums
  in {$[1,\infty]$}}.
\newblock \emph{\bibinfo{journal}{IMA J Numer Anal}}
  \bibinfo{year}{2005};\bibinfo{volume}{25}(\bibinfo{number}{4}):\bibinfo{pages}{685--697}.
\bibitem[{Oseledets and Muravleva(2010)}]{om-stockes-2010}
\bibinfo{author}{Oseledets\xfnm[ I.V.]}, \bibinfo{author}{Muravleva\xfnm[
  E.A.]}.
\newblock \bibinfo{title}{Fast orthogonalization to the kernel of discrete
  gradient operator with application to the {Stokes} problem}.
\newblock \emph{\bibinfo{journal}{Linear Algebra Appl}}
  \bibinfo{year}{2010};\bibinfo{volume}{432}(\bibinfo{number}{6}):\bibinfo{pages}{1492--1500}.
\newblock \DOIprefix\doi{10.1016/j.laa.2009.11.010}.
\bibitem[{Oseledets(2013)}]{osel-constr-2013}
\bibinfo{author}{Oseledets\xfnm[ I.V.]}.
\newblock \bibinfo{title}{Constructive representation of functions in low-rank
  tensor formats}.
\newblock \emph{\bibinfo{journal}{Constr Appr}}
  \bibinfo{year}{2013};\bibinfo{volume}{37}(\bibinfo{number}{1}):\bibinfo{pages}{1--18}.
\newblock \URLprefix \url{http://pub.inm.ras.ru/pub/inmras2010-04.pdf}.
  \DOIprefix\doi{10.1007/s00365-012-9175-x}.
\bibitem[{Beylkin and Mohlenkamp(2005)}]{beylkin-high-2005}
\bibinfo{author}{Beylkin\xfnm[ G.]}, \bibinfo{author}{Mohlenkamp\xfnm[ M.J.]}.
\newblock \bibinfo{title}{{Algorithms for numerical analysis in high
  dimensions}}.
\newblock \emph{\bibinfo{journal}{SIAM J Sci Comput}}
  \bibinfo{year}{2005};\bibinfo{volume}{26}(\bibinfo{number}{6}):\bibinfo{pages}{2133--2159}.
\bibitem[{Dolgov et~al.(2012{\natexlab{a}})Dolgov, Khoromskij and
  Oseledets}]{DKhOs-parabolic1-2012}
\bibinfo{author}{Dolgov\xfnm[ S.V.]}, \bibinfo{author}{Khoromskij\xfnm[ B.N.]},
  \bibinfo{author}{Oseledets\xfnm[ I.V.]}.
\newblock \bibinfo{title}{Fast solution of multi-dimensional parabolic problems
  in the tensor train/quantized tensor train--format with initial application
  to the {Fokker}-{Planck} equation}.
\newblock \emph{\bibinfo{journal}{SIAM J Sci Comput}}
  \bibinfo{year}{2012}{\natexlab{a}};\bibinfo{volume}{34}(\bibinfo{number}{6}):\bibinfo{pages}{A3016--A3038}.
\newblock \DOIprefix\doi{10.1137/120864210}.
\bibitem[{Khoromskij and Oseledets(2010{\natexlab{a}})}]{khos-pde-2010}
\bibinfo{author}{Khoromskij\xfnm[ B.N.]}, \bibinfo{author}{Oseledets\xfnm[
  I.V.]}.
\newblock \bibinfo{title}{{Quantics-TT} collocation approximation of
  parameter-dependent and stochastic elliptic {PDEs}}.
\newblock \emph{\bibinfo{journal}{Comput Meth Appl Math}}
  \bibinfo{year}{2010}{\natexlab{a}};\bibinfo{volume}{10}(\bibinfo{number}{4}):\bibinfo{pages}{376--394}.
\newblock \DOIprefix\doi{10.2478/cmam-2010-0023}.
\bibitem[{Khoromskij and Oseledets(2010{\natexlab{b}})}]{khos-dmrg-2010}
\bibinfo{author}{Khoromskij\xfnm[ B.N.]}, \bibinfo{author}{Oseledets\xfnm[
  I.V.]}.
\newblock \bibinfo{title}{{DMRG+QTT} approach to computation of the ground
  state for the molecular {Schr\"odinger} operator}.
\newblock \bibinfo{type}{Preprint} \bibinfo{number}{69}; MPI MIS;
  \bibinfo{address}{Leipzig}; \bibinfo{year}{2010}{\natexlab{b}}.
\newblock \URLprefix
  \url{http://www.mis.mpg.de/preprints/2010/preprint2010_69.pdf}.
\bibitem[{Oseledets(2010)}]{osel-2d2d-2010}
\bibinfo{author}{Oseledets\xfnm[ I.V.]}.
\newblock \bibinfo{title}{Approximation of $2^d \times 2^d$ matrices using
  tensor decomposition}.
\newblock \emph{\bibinfo{journal}{SIAM J Matrix Anal Appl}}
  \bibinfo{year}{2010};\bibinfo{volume}{31}(\bibinfo{number}{4}):\bibinfo{pages}{2130--2145}.
\newblock \DOIprefix\doi{10.1137/090757861}.
\bibitem[{Lebedeva(2011)}]{lebedeva-tensornd-2011}
\bibinfo{author}{Lebedeva\xfnm[ O.S.]}.
\newblock \bibinfo{title}{Tensor conjugate-gradient-type method for {Rayleigh}
  quotient minimization in block {QTT}-format}.
\newblock \emph{\bibinfo{journal}{Russ J Numer Anal Math Modelling}}
  \bibinfo{year}{2011};\bibinfo{volume}{26}(\bibinfo{number}{5}):\bibinfo{pages}{465–489}.
\newblock \DOIprefix\doi{10.1515/rjnamm.2011.026}.
\bibitem[{Khoromskij and Oseledets(2011)}]{khos-qtt-2010}
\bibinfo{author}{Khoromskij\xfnm[ B.N.]}, \bibinfo{author}{Oseledets\xfnm[
  I.V.]}.
\newblock \bibinfo{title}{{QTT}-approximation of elliptic solution operators in
  high dimensions}.
\newblock \emph{\bibinfo{journal}{Rus J Numer Anal Math Model}}
  \bibinfo{year}{2011};\bibinfo{volume}{26}(\bibinfo{number}{3}):\bibinfo{pages}{303--322}.
\newblock \DOIprefix\doi{10.1515/rjnamm.2011.017}.
\bibitem[{Savostyanov(2011)}]{sav-rank1-2011pre}
\bibinfo{author}{Savostyanov\xfnm[ D.V.]}.
\newblock \bibinfo{title}{{QTT}-rank-one vectors with {QTT}-rank-one and
  full-rank {Fourier} images}.
\newblock \bibinfo{type}{Preprint} \bibinfo{number}{45}; MPI MIS;
  \bibinfo{address}{Leipzig}; \bibinfo{year}{2011}.
\newblock \URLprefix
  \url{http://www.mis.mpg.de/preprints/2011/preprint2011_45.pdf}.
\bibitem[{Khoromskij(2011)}]{khor-qtt-2011}
\bibinfo{author}{Khoromskij\xfnm[ B.N.]}.
\newblock \bibinfo{title}{$\mathcal{O}(d \log n)$--{Quantics} approximation of
  {$N$--$d$} tensors in high-dimensional numerical modeling}.
\newblock \emph{\bibinfo{journal}{Constr Appr}}
  \bibinfo{year}{2011};\bibinfo{volume}{34}(\bibinfo{number}{2}):\bibinfo{pages}{257--280}.
\newblock \DOIprefix\doi{10.1007/s00365-011-9131-1}.
\bibitem[{Khoromskaia et~al.(2013)Khoromskaia, Khoromskij and
  Schneider}]{khkhschn-dft-2013}
\bibinfo{author}{Khoromskaia\xfnm[ V.]}, \bibinfo{author}{Khoromskij\xfnm[
  B.N.]}, \bibinfo{author}{Schneider\xfnm[ R.]}.
\newblock \bibinfo{title}{Tensor-structured factorized calculation of
  two-electron integrals in a general basis}.
\newblock \emph{\bibinfo{journal}{SIAM Journal on Scientific Computing}}
  \bibinfo{year}{2013};\bibinfo{volume}{35}(\bibinfo{number}{2}):\bibinfo{pages}{A987--A1010}.
\newblock \URLprefix \url{http://dx.doi.org/10.1137/120884067}.
  \DOIprefix\doi{10.1137/120884067}.
  \href{http://arxiv.org/abs/http://dx.doi.org/10.1137/120884067}{\tt
  arXiv:http://dx.doi.org/10.1137/120884067}.
\bibitem[{Ballani(2012)}]{ballani-integrals-2012}
\bibinfo{author}{Ballani\xfnm[ J.]}.
\newblock \bibinfo{title}{Fast evaluation of singular bem integrals based on
  tensor approximations}.
\newblock \emph{\bibinfo{journal}{Numerische Mathematik}}
  \bibinfo{year}{2012};\bibinfo{volume}{121}(\bibinfo{number}{3}):\bibinfo{pages}{433--460}.
\newblock \URLprefix \url{http://dx.doi.org/10.1007/s00211-011-0436-6}.
  \DOIprefix\doi{10.1007/s00211-011-0436-6}.
\bibitem[{Rakhuba and Oseledets(2015)}]{ro-conv3d-2015}
\bibinfo{author}{Rakhuba\xfnm[ M.V.]}, \bibinfo{author}{Oseledets\xfnm[ I.V.]}.
\newblock \bibinfo{title}{Fast multidimensional convolution in low-rank tensor
  formats via cross approximation}.
\newblock \emph{\bibinfo{journal}{SIAM Journal on Scientific Computing}}
  \bibinfo{year}{2015};\bibinfo{volume}{37}(\bibinfo{number}{2}):\bibinfo{pages}{A565--A582}.
\newblock \URLprefix \url{http://dx.doi.org/10.1137/140958529}.
  \DOIprefix\doi{10.1137/140958529}.
  \href{http://arxiv.org/abs/http://dx.doi.org/10.1137/140958529}{\tt
  arXiv:http://dx.doi.org/10.1137/140958529}.
\bibitem[{Khoromskij(2010{\natexlab{b}})}]{khor-acc-2010}
\bibinfo{author}{Khoromskij\xfnm[ B.N.]}.
\newblock \bibinfo{title}{Fast and accurate tensor approximation of
  multivariate convolution with linear scaling in dimension}.
\newblock \emph{\bibinfo{journal}{J Comp Appl Math}}
  \bibinfo{year}{2010}{\natexlab{b}};\bibinfo{volume}{234}(\bibinfo{number}{11}):\bibinfo{pages}{3122--3139}.
\newblock \DOIprefix\doi{10.1016/j.cam.2010.02.004}.
\bibitem[{Kazeev et~al.(2011)Kazeev, Khoromskij and
  Tyrtyshnikov}]{khkaz-conv-2011pre}
\bibinfo{author}{Kazeev\xfnm[ V.]}, \bibinfo{author}{Khoromskij\xfnm[ B.N.]},
  \bibinfo{author}{Tyrtyshnikov\xfnm[ E.E.]}.
\newblock \bibinfo{title}{Multilevel {Toeplitz} matrices generated by
  tensor-structured vectors and convolution with logarithmic complexity}.
\newblock \bibinfo{type}{Tech. Rep.} \bibinfo{number}{36}; MPI MIS;
  \bibinfo{address}{Leipzig}; \bibinfo{year}{2011}.
\newblock \URLprefix
  \url{http://www.mis.mpg.de/publications/preprints/2011/prepr2011-36.html}.
\bibitem[{Dolgov et~al.(2012{\natexlab{b}})Dolgov, Khoromskij and
  Savostyanov}]{dks-ttfft-2012}
\bibinfo{author}{Dolgov\xfnm[ S.V.]}, \bibinfo{author}{Khoromskij\xfnm[ B.N.]},
  \bibinfo{author}{Savostyanov\xfnm[ D.V.]}.
\newblock \bibinfo{title}{Superfast {Fourier} transform using {QTT}
  approximation}.
\newblock \emph{\bibinfo{journal}{J Fourier Anal Appl}}
  \bibinfo{year}{2012}{\natexlab{b}};\bibinfo{volume}{18}(\bibinfo{number}{5}):\bibinfo{pages}{915--953}.
\newblock \DOIprefix\doi{10.1007/s00041-012-9227-4}.
\bibitem[{Beylkin and Monz{\'o}n(2010)}]{beylkin-expsumrev-2010}
\bibinfo{author}{Beylkin\xfnm[ G.]}, \bibinfo{author}{Monz{\'o}n\xfnm[ L.]}.
\newblock \bibinfo{title}{Approximation by exponential sums revisited}.
\newblock \emph{\bibinfo{journal}{Appl Comput Harm Anal}}
  \bibinfo{year}{2010};\bibinfo{volume}{28}(\bibinfo{number}{2}):\bibinfo{pages}{131--149}.
\newblock \DOIprefix\doi{10.1016/j.acha.2009.08.011}.
\bibitem[{Brigham(1988)}]{brigham-fft-1988}
\bibinfo{author}{Brigham\xfnm[ E.O.]}.
\newblock \bibinfo{title}{The fast Fourier transform and its applications}.
\newblock \bibinfo{publisher}{Prentice Hall}; \bibinfo{year}{1988}.
\bibitem[{Nussbaumer(1981)}]{nuss-fft-1981}
\bibinfo{author}{Nussbaumer\xfnm[ H.J.]}.
\newblock \bibinfo{title}{Fast Fourier transform and convolution algorithms}.
\newblock \bibinfo{publisher}{Springer}; \bibinfo{year}{1981}.
\bibitem[{Chiu and Demanet(2013)}]{demanet-coherent-2013}
\bibinfo{author}{Chiu\xfnm[ J.]}, \bibinfo{author}{Demanet\xfnm[ L.]}.
\newblock \bibinfo{title}{Sublinear randomized algorithms for skeleton
  decompositions}.
\newblock \emph{\bibinfo{journal}{SIAM Journal on Matrix Analysis and
  Applications}}
  \bibinfo{year}{2013};\bibinfo{volume}{34}(\bibinfo{number}{3}):\bibinfo{pages}{1361--1383}.
\newblock \URLprefix \url{http://dx.doi.org/10.1137/110852310}.
  \DOIprefix\doi{10.1137/110852310}.
  \href{http://arxiv.org/abs/http://dx.doi.org/10.1137/110852310}{\tt
  arXiv:http://dx.doi.org/10.1137/110852310}.
\bibitem[{Bebendorf and Grzhibovskis(2006)}]{bebe-bem-2006}
\bibinfo{author}{Bebendorf\xfnm[ M.]}, \bibinfo{author}{Grzhibovskis\xfnm[
  R.]}.
\newblock \bibinfo{title}{Accelerating galerkin bem for linear elasticity using
  adaptive cross approximation}.
\newblock \emph{\bibinfo{journal}{Mathematical Methods in the Applied
  Sciences}}
  \bibinfo{year}{2006};\bibinfo{volume}{29}(\bibinfo{number}{14}):\bibinfo{pages}{1721--1747}.
\newblock \URLprefix \url{http://dx.doi.org/10.1002/mma.759}.
  \DOIprefix\doi{10.1002/mma.759}.
\bibitem[{Bebendorf and Kunis(2009)}]{bebe-cheb-2009}
\bibinfo{author}{Bebendorf\xfnm[ M.]}, \bibinfo{author}{Kunis\xfnm[ S.]}.
\newblock \bibinfo{title}{Recompression techniques for adaptive cross
  approximation}.
\newblock \emph{\bibinfo{journal}{J Integral Equations Applications}}
  \bibinfo{year}{2009};\bibinfo{volume}{21}(\bibinfo{number}{3}):\bibinfo{pages}{331--357}.
\newblock \URLprefix \url{http://dx.doi.org/10.1216/JIE-2009-21-3-331}.
  \DOIprefix\doi{10.1216/JIE-2009-21-3-331}.
\bibitem[{Bebendorf(2011)}]{bebe-acamf-2011}
\bibinfo{author}{Bebendorf\xfnm[ M.]}.
\newblock \bibinfo{title}{Adaptive cross approximation of multivariate
  functions}.
\newblock \emph{\bibinfo{journal}{Constructive Approximation}}
  \bibinfo{year}{2011};\bibinfo{volume}{34}(\bibinfo{number}{2}):\bibinfo{pages}{149--179}.
\newblock \URLprefix \url{http://dx.doi.org/10.1007/s00365-010-9103-x}.
  \DOIprefix\doi{10.1007/s00365-010-9103-x}.
\bibitem[{Hairer et~al.(2006)Hairer, Lubich and Wanner}]{lubich-integr-2006}
\bibinfo{author}{Hairer\xfnm[ E.]}, \bibinfo{author}{Lubich\xfnm[ C.]},
  \bibinfo{author}{Wanner\xfnm[ G.]}.
\newblock \bibinfo{title}{Geometric Numerical Integration. Structure-Preserving
  Algorithms for Ordinary Differential Equations}.
\newblock Springer series in computational mathematics;
  \bibinfo{publisher}{Springer}; \bibinfo{year}{2006}.
\newblock \DOIprefix\doi{10.1007/3-540-30666-8}.
\bibitem[{Brezinski and Zaglia(1991)}]{brezinski-extrapolation-1991}
\bibinfo{author}{Brezinski\xfnm[ C.]}, \bibinfo{author}{Zaglia\xfnm[ M.R.]}.
\newblock \bibinfo{title}{Extrapolation methods. Theory and practice}.
\newblock \bibinfo{publisher}{Elsevier}; \bibinfo{year}{1991}.
\bibitem[{Stoer and Bulirsch(2002)}]{stoer-extrapolation-2002}
\bibinfo{author}{Stoer\xfnm[ J.]}, \bibinfo{author}{Bulirsch\xfnm[ R.]}.
\newblock \bibinfo{title}{Introduction to Numerical Analysis}.
\newblock \bibinfo{edition}{3rd ed.} ed.; \bibinfo{publisher}{Springer};
  \bibinfo{year}{2002}.
\bibitem[{Makri(1995)}]{makri-pathsplit-1995}
\bibinfo{author}{Makri\xfnm[ N.]}.
\newblock \bibinfo{title}{Numerical path integral techniques for long time
  dynamics of quantum dissipative systems}.
\newblock \emph{\bibinfo{journal}{Journal of Mathematical Physics}}
  \bibinfo{year}{1995};\bibinfo{volume}{36}(\bibinfo{number}{5}):\bibinfo{pages}{2430--2457}.
\newblock \DOIprefix\doi{http://dx.doi.org/10.1063/1.531046}.
\bibitem[{Makri(2014)}]{makri-quantdyn-2014}
\bibinfo{author}{Makri\xfnm[ N.]}.
\newblock \bibinfo{title}{Blip decomposition of the path integral: Exponential
  acceleration of real-time calculations on quantum dissipative systems}.
\newblock \emph{\bibinfo{journal}{The Journal of Chemical Physics}}
  \bibinfo{year}{2014};\bibinfo{volume}{141}(\bibinfo{number}{13}):\bibinfo{eid}{134117}.
\newblock \DOIprefix\doi{http://dx.doi.org/10.1063/1.4896736}.
\bibitem[{Dubach(1996)}]{dubach-bc-1996}
\bibinfo{author}{Dubach\xfnm[ E.]}.
\newblock \bibinfo{title}{Artificial boundary conditions for diffusion
  equations: Numerical study}.
\newblock \emph{\bibinfo{journal}{Journal of Computational and Applied
  Mathematics}}
  \bibinfo{year}{1996};\bibinfo{volume}{70}(\bibinfo{number}{1}):\bibinfo{pages}{127
  -- 144}.
\newblock \URLprefix
  \url{http://www.sciencedirect.com/science/article/pii/0377042795001409}.
  \DOIprefix\doi{http://dx.doi.org/10.1016/0377-0427(95)00140-9}.
\bibitem[{Wu and Sun(2004)}]{xz-bc-2004}
\bibinfo{author}{Wu\xfnm[ X.]}, \bibinfo{author}{Sun\xfnm[ Z.Z.]}.
\newblock \bibinfo{title}{Convergence of difference scheme for heat equation in
  unbounded domains using artificial boundary conditions}.
\newblock \emph{\bibinfo{journal}{Applied Numerical Mathematics}}
  \bibinfo{year}{2004};\bibinfo{volume}{50}(\bibinfo{number}{2}):\bibinfo{pages}{261
  -- 277}.
\newblock \URLprefix
  \url{http://www.sciencedirect.com/science/article/pii/S0168927404000030}.
  \DOIprefix\doi{http://dx.doi.org/10.1016/j.apnum.2004.01.001}.
\bibitem[{Tyrtyshnikov(1996)}]{tee-mosaic-1996}
\bibinfo{author}{Tyrtyshnikov\xfnm[ E.E.]}.
\newblock \bibinfo{title}{Mosaic-skeleton approximations}.
\newblock \emph{\bibinfo{journal}{Calcolo}}
  \bibinfo{year}{1996};\bibinfo{volume}{33}(\bibinfo{number}{1}):\bibinfo{pages}{47--57}.
\newblock \DOIprefix\doi{10.1007/BF02575706}.
\bibitem[{Goreinov et~al.(1997{\natexlab{a}})Goreinov, Tyrtyshnikov and
  Zamarashkin}]{gtz-psa-1997}
\bibinfo{author}{Goreinov\xfnm[ S.A.]}, \bibinfo{author}{Tyrtyshnikov\xfnm[
  E.E.]}, \bibinfo{author}{Zamarashkin\xfnm[ N.L.]}.
\newblock \bibinfo{title}{A theory of pseudo--skeleton approximations}.
\newblock \emph{\bibinfo{journal}{Linear Algebra Appl}}
  \bibinfo{year}{1997}{\natexlab{a}};\bibinfo{volume}{261}:\bibinfo{pages}{1--21}.
\newblock \DOIprefix\doi{10.1016/S0024-3795(96)00301-1}.
\bibitem[{Goreinov et~al.(1995)Goreinov, Tyrtyshnikov and
  Zamarashkin}]{gt-psa-1995}
\bibinfo{author}{Goreinov\xfnm[ S.A.]}, \bibinfo{author}{Tyrtyshnikov\xfnm[
  E.E.]}, \bibinfo{author}{Zamarashkin\xfnm[ N.L.]}.
\newblock \bibinfo{title}{Pseudo--skeleton approximations of matrices}.
\newblock \emph{\bibinfo{journal}{Reports of Russian Academy of Sciences}}
  \bibinfo{year}{1995};\bibinfo{volume}{342}(\bibinfo{number}{2}):\bibinfo{pages}{151--152}.
\bibitem[{Goreinov et~al.(1997{\natexlab{b}})Goreinov, Zamarashkin and
  Tyrtyshnikov}]{gtz-maxvol-1997}
\bibinfo{author}{Goreinov\xfnm[ S.A.]}, \bibinfo{author}{Zamarashkin\xfnm[
  N.L.]}, \bibinfo{author}{Tyrtyshnikov\xfnm[ E.E.]}.
\newblock \bibinfo{title}{Pseudo--skeleton approximations by matrices of
  maximum volume}.
\newblock \emph{\bibinfo{journal}{Mathematical Notes}}
  \bibinfo{year}{1997}{\natexlab{b}};\bibinfo{volume}{62}(\bibinfo{number}{4}):\bibinfo{pages}{515--519}.
\newblock \DOIprefix\doi{10.1007/BF02358985}.
\bibitem[{Golub and Loan(2012)}]{golub-matrix-2012}
\bibinfo{author}{Golub\xfnm[ G.H.]}, \bibinfo{author}{Loan\xfnm[ C.F.V.]}.
\newblock \bibinfo{title}{Matrix computations, 4th edition}.
\newblock \bibinfo{year}{2012}.
\bibitem[{Demmel(1997)}]{demmel-linalg-1997}
\bibinfo{author}{Demmel\xfnm[ J.W.]}.
\newblock \bibinfo{title}{Applied Numerical Linear Algebra}.
\newblock \bibinfo{year}{1997}.
\bibitem[{Tyrtyshnikov(2000)}]{tee-cross-2000}
\bibinfo{author}{Tyrtyshnikov\xfnm[ E.E.]}.
\newblock \bibinfo{title}{Incomplete cross approximation in the
  mosaic--skeleton method}.
\newblock \emph{\bibinfo{journal}{Computing}}
  \bibinfo{year}{2000};\bibinfo{volume}{64}(\bibinfo{number}{4}):\bibinfo{pages}{367--380}.
\newblock \DOIprefix\doi{10.1007/s006070070031}.
\bibitem[{Bebendorf(2000)}]{bebe-2000}
\bibinfo{author}{Bebendorf\xfnm[ M.]}.
\newblock \bibinfo{title}{Approximation of boundary element matrices}.
\newblock \emph{\bibinfo{journal}{Numer Mathem}}
  \bibinfo{year}{2000};\bibinfo{volume}{86}(\bibinfo{number}{4}):\bibinfo{pages}{565--589}.
\newblock \DOIprefix\doi{10.1007/pl00005410}.
\bibitem[{Goreinov and Tyrtyshnikov(2001)}]{gt-maxvol-2001}
\bibinfo{author}{Goreinov\xfnm[ S.A.]}, \bibinfo{author}{Tyrtyshnikov\xfnm[
  E.E.]}.
\newblock \bibinfo{title}{The maximal-volume concept in approximation by
  low-rank matrices}.
\newblock \emph{\bibinfo{journal}{Contemporary Mathematics}}
  \bibinfo{year}{2001};\bibinfo{volume}{280}:\bibinfo{pages}{47--51}.
\bibitem[{Goreinov et~al.(2008)Goreinov, Oseledets, Savostyanov, Tyrtyshnikov
  and Zamarashkin}]{gostz-maxvol-2008}
\bibinfo{author}{Goreinov\xfnm[ S.A.]}, \bibinfo{author}{Oseledets\xfnm[
  I.V.]}, \bibinfo{author}{Savostyanov\xfnm[ D.V.]},
  \bibinfo{author}{Tyrtyshnikov\xfnm[ E.E.]},
  \bibinfo{author}{Zamarashkin\xfnm[ N.L.]}.
\newblock \bibinfo{title}{How to find a good submatrix}.
\newblock \bibinfo{type}{Research Report} \bibinfo{number}{08-10}; ICM HKBU;
  \bibinfo{address}{Kowloon Tong, Hong Kong}; \bibinfo{year}{2008}.
\newblock \URLprefix \url{http://www.math.hkbu.edu.hk/ICM/pdf/08-10.pdf}.
\bibitem[{Litsarev and Oseledets(2014{\natexlab{a}})}]{lits-deposit-2014}
\bibinfo{author}{Litsarev\xfnm[ M.S.]}, \bibinfo{author}{Oseledets\xfnm[
  I.V.]}.
\newblock \bibinfo{title}{{Cross2d C++ code: included in the DEPOSIT
  destribution}}.
\newblock \bibinfo{year}{2014}{\natexlab{a}}.
\newblock \URLprefix \url{https://bitbucket.org/appl\_m729/code-deposit}.
\bibitem[{Litsarev and Oseledets(2014{\natexlab{b}})}]{litsarev-cpc-2014}
\bibinfo{author}{Litsarev\xfnm[ M.S.]}, \bibinfo{author}{Oseledets\xfnm[
  I.V.]}.
\newblock \bibinfo{title}{The {DEPOSIT} computer code based on the low rank
  approximations}.
\newblock \emph{\bibinfo{journal}{Computer Physics Communications}}
  \bibinfo{year}{2014}{\natexlab{b}};\bibinfo{volume}{185}(\bibinfo{number}{10}):\bibinfo{pages}{2801–2802}.
\newblock \URLprefix
  \url{http://www.sciencedirect.com/science/article/pii/S0010465514002173}.
  \DOIprefix\doi{http://dx.doi.org/10.1016/j.cpc.2014.06.012}.
\bibitem[{Litsarev and Oseledets(2015)}]{litsarev-nlaa-2015}
\bibinfo{author}{Litsarev\xfnm[ M.S.]}, \bibinfo{author}{Oseledets\xfnm[
  I.V.]}.
\newblock \bibinfo{title}{Fast low-rank approximations of multidimensional
  integrals in ion-atomic collisions modelling}.
\newblock \emph{\bibinfo{journal}{Numerical Linear Algebra with Applications}}
  \bibinfo{year}{2015};\URLprefix \url{http://dx.doi.org/10.1002/nla.2008}.
  \DOIprefix\doi{10.1002/nla.2008}.
\bibitem[{Litsarev(2015)}]{lits-dzcross-2015}
\bibinfo{author}{Litsarev\xfnm[ M.S.]}.
\newblock \bibinfo{title}{{Cross2d C++ code for the real and complex matrices.
  Template version.}}
\newblock \bibinfo{year}{2015}.
\newblock \URLprefix \url{https://bitbucket.org/appl\_m729/dzcross2d}.
\bibitem[{Gavrilyuk and Khoromskij(2011)}]{gavkhor-cay-2011}
\bibinfo{author}{Gavrilyuk\xfnm[ I.P.]}, \bibinfo{author}{Khoromskij\xfnm[
  B.N.]}.
\newblock \bibinfo{title}{{Quantized-TT-Cayley transform to compute dynamics
  and spectrum of high-dimensional Hamiltonians}}.
\newblock \emph{\bibinfo{journal}{Computational methods in applied
  mathematics}}
  \bibinfo{year}{2011};\bibinfo{volume}{11}(\bibinfo{number}{3}):\bibinfo{pages}{273--290}.
\newblock \URLprefix
  \url{http://www.mis.mpg.de/de/publications/preprints/2011/prepr2011-43.html}.
  \DOIprefix\doi{10.2478/cmam-2011-0015}.

\end{thebibliography}

\end{document}